\documentclass[a4paper,10pt,twoside]{article}
\pdfoutput=1

\usepackage[activate={true,nocompatibility},final,tracking=true,kerning=true,spacing=true,factor=1100,stretch=0,shrink=0]{microtype}
 \SetTracking{encoding={*}, shape=sc}{30}%without this, small caps is too spaced out
 \microtypecontext{spacing=nonfrench}
% activate={true,nocompatibility} - activate protrusion and expansion
% final - enable microtype; use "draft" to disable
% tracking=true, kerning=true, spacing=true - activate these techniques
% factor=1100 - add 10% to the protrusion amount (default is 1000)
% stretch=10, shrink=10 - reduce stretchability/shrinkability (default is 20/20)

\usepackage[T1]{fontenc} %WARNING make sure you have the package "cm-super" installed, or this line will produce ugly bitmap fonts. If you cannot install cm-super, probably it is better to comment this line
\usepackage[utf8]{inputenc}
\usepackage[british]{babel}
\usepackage{mathrsfs}
\usepackage{amsfonts}
\usepackage{amsmath} %these two packages need to go in this order
\usepackage{amsthm}  %otherwise \qedhere inside gather or align does not work properly
\usepackage{amssymb}
\usepackage{mathtools}
\usepackage{graphicx}
\usepackage{xcolor}
\usepackage{url}
\usepackage[noadjust]{cite}
\usepackage{stmaryrd}
\usepackage{titling}
\setlength{\droptitle}{-100pt}
\usepackage{centernot}
\usepackage{indentfirst}

\usepackage[hang,flushmargin]{footmisc}
\usepackage[affil-it]{authblk}
\usepackage{hyperref}
\hypersetup{colorlinks=false, pdfborder={0 0 0}}

\usepackage[inline]{enumitem}
\usepackage{fancyhdr}
\pagestyle{fancy}                                       
\fancyhf{}                                              
\fancyhead[LE,RO]{\thepage}                             
\fancyhead[LO]{\scshape\nouppercase{The AP for automorphisms of oags}}        
\fancyhead[RE]{\scshape\nouppercase{\leftmark}}

\usepackage[capitalize]{cleveref}
\Crefname{pr}{Proposition}{Propositions}
\Crefname{subsection}{Subsection}{Subsections}

\usepackage{tikz}
\usetikzlibrary{matrix,arrows,decorations.pathmorphing, shapes, decorations.fractals, patterns, positioning}
\tikzset{snake it/.style={decorate, decoration={snake, segment length=5.3pt, amplitude=0.25mm}}}

\newcommand{\from}{\colon}
\newcommand{\implica}{\rightarrow}
\renewcommand{\phi}{\varphi}
\renewcommand{\epsilon}{\varepsilon}
\renewcommand{\models}{\vDash}
\newcommand{\proves}{\vdash}
\newcommand{\restr}{\upharpoonright}
\newcommand{\monster}{\mathfrak U}
\newcommand{\invtypes}{S^{\mathrm{inv}}}
\newcommand{\invext}{\mid}
\newcommand{\bla}[4]{{#1}_{#2}#3\ldots#3{#1}_{#4}}
\newcommand{\pow}[2]{#1^{(#2)}}
\newcommand{\inverse}{^{-1}}
\newcommand{\allora}{\Rightarrow}
\newcommand{\then}{\Longrightarrow}
\DeclareMathOperator{\alt}{alt}
\DeclareMathOperator{\tp}{tp}
\DeclareMathOperator{\cl}{cl}
\DeclareMathOperator{\aut}{Aut}

\DeclareMathOperator{\orb}{orb}
\DeclareMathOperator{\im}{im}
\DeclareMathOperator{\sign}{sign}
\DeclareMathOperator{\supp}{supp}
\DeclarePairedDelimiter{\set}{\{}{\}}
\DeclarePairedDelimiter{\abs}{\lvert}{\rvert}
\DeclarePairedDelimiter{\seq}{\langle}{\rangle}
\theoremstyle{definition}
\newtheorem{defin}{Definition}[section]
\newtheorem{thm}[defin]{Theorem}
\newtheorem{pr}[defin]{Proposition}
\newtheorem{co}[defin]{Corollary}
\newtheorem{lemma}[defin]{Lemma}
\newtheorem{notation}[defin]{Notation}
\newtheorem{eg}[defin]{Example}
\newtheorem{rem}[defin]{Remark}
\newtheorem{fact}[defin]{Fact}
\newtheorem{ass}[defin]{Assumption}
\newtheorem{question}[defin]{Question}
\newtheorem{prob}[defin]{Problem}
\newtheorem{claim}{Claim}[defin]
\newtheorem{alphthm}{Theorem}

\let\oldqed\qedsymbol
\newcommand{\qedclaim}{\mbox{$\underset{\textsc{claim}}{\oldqed}$}}
\makeatletter
\newenvironment{claimproof}[1][\it Proof of Claim]{
\let\qedsymbol\qedclaim
  \par
  \pushQED{\qed}%
  \normalfont \topsep6\p@\@plus6\p@\relax
  \trivlist
\item[\hskip\labelsep
  \upshape
  #1\@addpunct{.}]\ignorespaces
}{%
  \popQED\endtrivlist\@endpefalse
}
\makeatother
\let\qedsymbol\oldqed

\setlength{\headheight}{15pt}   %prevents the compiler from whining (otherwise it will say it is not enough)

\makeatletter
\newcommand{\subjclass}[2][2020]{%
  \let\@oldtitle\@title%
  \gdef\@title{\@oldtitle\footnotetext{\hspace*{-2em}#1 \emph{Mathematics subject classification.} #2}}%
}
\newcommand{\keywords}[1]{%
  \let\@@oldtitle\@title%
  \gdef\@title{\@@oldtitle\footnotetext{\hspace*{-2em}\emph{Keywords:} #1.}}%
}
\makeatother

\author[1]{Jan Dobrowolski%
\thanks{email: \url{dobrowol@math.uni.wroc.pl} \textsc{orcid}: \url{https://orcid.org/0000-0003-3435-4782}}}
\affil[1]{\small Department of Mathematics, University of Manchester, Oxford Road, Manchester M13 9PL, United Kingdom}
\affil[1]{\small Instytut Matematyczny\\
Uniwersystet Wroc\l{}awski\\
Wroc\l{}aw\\
Poland}

\author[2]{Rosario Mennuni%
\thanks{email: \url{R.Mennuni@posteo.net} \textsc{orcid}: \url{https://orcid.org/0000-0003-2282-680X}}}
\affil[2]{\small Dipartimento di Matematica, Universit\`a di Pisa, Largo Bruno Pontecorvo 5, 56127 Pisa, Italy}
\title{The Amalgamation Property for automorphisms of ordered abelian groups}

\date{}

\setcounter{secnumdepth}{2}

\subjclass{Primary: 06F20, 03C60. Secondary: 03C45, 03C64.}

\keywords{ordered abelian group, automorphism, amalgamation property, independence property, positive logic}

\begin{document}

\maketitle

\vspace{-1.5cm}

\begin{abstract}
  We prove that the category of ordered abelian groups equipped with an automorphism has the Amalgamation Property, deduce that their inductive theory is $\mathsf{NIP}$ in the sense of positive logic, and initiate a development of the latter framework. 

As byproducts of the proof, we obtain a generalised version of the Hahn Embedding Theorem which allows to lift each automorphism of an ordered abelian group to one of an ordered real vector space, and we show that, on existentially closed structures, linear combinations of iterates of the automorphism have the Intermediate Value Property.
\end{abstract}

\tableofcontents

\bigskip

One of the first mathematical skills that are learnt by students in elementary schools is how to add and subtract integers, and how to compare their sizes. Whilst most teachers may not explicitly pronounce the words ``ordered abelian group'', they are indeed teaching their pupils to work in them. More relevantly for current mathematical research, ordered abelian groups (henceforth, \emph{oags}) are fundamental objects in the algebra of ordered structures, in tropical geometry, and  are crucial in the study of (not necessarily ordered) valued fields, where they occur in the form of value groups. 
Their non-abelian analogues appear in geometry as braid groups~\cite{Dehornoy_1994,Fenn_Greene_Rolfsen_Rourke_Wiest_1999}, in group theory as right-angled Artin groups~\cite{Duchamp_Thibon_1992} and, sometimes, even as automorphism groups of abelian ones \cite{Conrad_1958}.

Perhaps usually absent in elementary education, that of automorphism is another fundamental concept, and another main ingredient of the present work. Automorphisms of oags have been  a topic of interest from the times of H\"older, who proved that  every \emph{Archimedean} oag, that is, one where every non-trivial cyclic subgroup is unbounded, may be embedded in the ordered additive group of the reals, and that every automorphism of such an oag may be identified with multiplication by a real number, to the more recent work of Laskowski and Pal~\cite{Laskowski_Pal_2015} and of Kuhlmann and Serra~\cite{ks_groups}. Applications to the study of automorphisms of valued fields appeared for instance in \cite{Chernikov_Hils_2014,Kuhlmann_Serra_2022}. Some decidability problems related to automorphisms of ordered abelian groups were studied in \cite{Mart}.

If automorphisms of Archimedean oags are easily classified, the situation changes drastically when we look at oags with more than one Archimedean class, that is, where there are positive elements $x$, $y$ such that for every natural number $n$ the $n$-fold sum of $x$ with itself lies below $y$. One important question is whether automorphisms of oags have the \emph{Amalgamation Property}, abbreviated as AP. The latter is a fundamental notion in several areas, ranging from category theory~\cite{kiss1983categorical}, to model theory~\cite{hodges}, where it is a crucial ingredient in the definition of a Fra\"iss\'e class, hence to permutation groups~\cite{Macpherson_2011} and, via the Kechris--Pestov--Todor\v cevi\'c correspondence~\cite{Kechris_Pestov_Todorcevic_2005}, to Ramsey theory and topological dynamics. Explicitly,  saying that automorphisms of oags have the AP means that, given any pair of embeddings of oags $f\from (A, \sigma_A)\to (B, \sigma_B)$ and $g\from (A, \sigma_A)\to (C, \sigma_C)$, commuting with the automorphisms $\sigma_-$, there are $(D, \sigma_D)$ and embeddings of $(B, \sigma_B)$ and $(C, \sigma_C)$ into it, again commuting with the involved $\sigma_-$, making the diagram in \Cref{fig:apdiag} commute.
\begin{figure}[b]
  \begin{center}
    \begin{tikzpicture}[scale=3]
  \node(a) at (0,0){$(A,\sigma_A)$};
  \node(b) at (1,0.5){$(B, \sigma_B)$};
  \node(c) at (1,-0.5){$(C, \sigma_C)$};
  \node(d) at (2,0){$(D, \sigma_D)$};
  \node(comm) at (1,0){$\circlearrowright$};
\path[->, thick,  font=\scriptsize,>= angle 90]
(a) edge node [above]  {$f$} (b)
(a) edge node [below]  {$g$} (c)
(b) edge [dashed] node [above]  {$\exists$} (d)
(c) edge [dashed] node [below]  {$\exists$} (d)
;
\end{tikzpicture}
\end{center}
\caption{An amalgamation diagram of automorphisms of oags.}\label{fig:apdiag}
\end{figure}
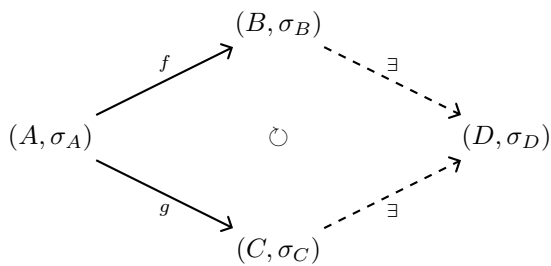

If we remove some structure, then the situation is well understood.
For instance, if we drop the group operation, that is, consider the case of automorphisms of linearly (that is, totally) ordered sets, the AP was proven in~\cite{Lascar}; in fact, the situation is also clear for several automorphisms, and for other kinds of operators~\cite{lipparini}. If instead we drop the automorphism, that is, consider pure oags, then not only the AP is known to hold, but there is a description of the posets of different amalgamations up to equivalence~\cite{Pierce_1972}. 
When we simultaneously have an abelian group structure, an order compatible with the latter, and an automorphism preserving both, to the best of our knowledge it was so far unknown whether the AP holds or not. Our main result is that this is indeed the case.

\begin{alphthm}[\Cref{thm:apqtor}]\label{athm:AP}
  The category of ordered abelian groups with an automorphism has the Amalgamation Property.
\end{alphthm}
Our proof uses model-theoretic methods, most prominently the Compactness Theorem, a pinch of o-minimality~\cite{van_den_dries_tame_1998}, and the notion of an \emph{existentially closed} structure (\Cref{defin:ec}).
In the case of an oag with an automorphism $(M, \sigma_M)$, this essentially means that whenever, in some extension $(B,\sigma_B)\supseteq (M, \sigma_M)$, a  subset  of $B^{nk}$ that is defined by affine equations and inequalities with parameters in $M$ contains a point of the form
\[
  (\bla a0,{n-1}, \sigma(a_0), \ldots , \sigma(a_{n-1}), \ldots, \sigma^{k-1}(a_0), \ldots ,\sigma^{k-1}(a_{n-1}))
\]
  then such a point, satisfying the same affine constraints, may already be found in $M^{nk}$. The model-theoretic preliminaries necessary for our proof of \Cref{athm:AP} are recalled, together with some standard notions around oags, in \Cref{sec:prelim}.

The method of proof we employed generated a couple of byproducts which we believe to be interesting in their own right, the first being the below generalisation of the Hahn Embedding Theorem, proven in \Cref{sec:realification}. Recall that the latter states that every oag $A$ may be embedded in an oag of generalised power series with real coefficients, let us call it $B$. Although every automorphism of $A$ may be extended to $B$, the  latter carries the structure of an ordered \emph{real} vector space, and one may need the extension of $\sigma$ to preserve the real structure. It turns out that, while the $B$ given by the Hahn Embedding Theorem is sometimes too small for this task to be achieved (\Cref{eg:nolift}), this is nevertheless possible by passing to a larger structure.
\begin{alphthm}[\Cref{thm:redtoR}]
  For every oag $A$  there are an ordered  $\mathbb R$-vector space $B$ and an embedding of oags  $A\to B$ such that every oag automorphism of $A$ lifts to an ordered $\mathbb R$-vector space automorphism of $B$.
\end{alphthm}

As for the second byproduct, recall that every continuous function on $\mathbb R$ has the \emph{Intermediate Value Property}: it maps convex sets to convex sets. In a general ordered abelian group with an automorphism $\sigma$, linear combinations of iterates of $\sigma$ are clearly continuous with respect to the order topology; nevertheless, this topology may fail to be connected, and it is indeed possible for convexity not to be preserved by such a function (\Cref{eg:noIVP}). Yet, we prove that on existentially closed structures, despite the failure of topological connectedness, the Intermediate Value Property survives. For technical reasons, we will actually need a slightly stronger statement.
\begin{alphthm}[\Cref{thm:minima}]\label{athm:ivp}
  If $(M, \sigma)$ is existentially closed, $\lambda_{i,j}$ are scalars, and $d_i\in M$, then the Intermediate Value Property holds for every function of the form $h(x)=\min_{i\le k} \left(d_i+\sum_{j\le n_i} \lambda_{i,j} \sigma^j(x)\right)$.
\end{alphthm}
After some preliminary work on \emph{absolutely monotone $\sigma$-polynomials} in \Cref{sec:OVSA}, we will prove \Cref{athm:ivp} in  \Cref{sec:ivp}, and use it to deduce \Cref{athm:AP} in \Cref{sec:amalgam}. This is in turn used in our final \Cref{sec:posnip} to prove that the \emph{positive theory} of automorphisms of oags is model-theoretically tame. More precisely, we prove it is \emph{dependent}, or \emph{$\mathsf{NIP}$}, and take the opportunity to develop the framework. 

In a nutshell, this means that existentially closed oags are combinatorially well-behaved according to a certain notion of tameness, provided the latter is formulated in a way that makes sense in this setting. In order to explain this in greater detail, and motivate our interest in it, we temporarily abandon automorphisms of oags in favour of automorphisms of fields. The class of existentially closed fields with an automorphism is \emph{elementary}, that is, it can be axiomatised by a first-order theory, known as $\mathsf{ACFA}$. Elementary classes may be studied under the lenses of \emph{neostability theory}, a branch of model theory concerned with the study of ``tameness properties'' that a first-order theory may or may not have. The theory $\mathsf{ACFA}$ satisfies one of these tameness conditions, known as \emph{simplicity}. This fact provides existentially closed difference fields with a canonical, well-behaved independence relation, and has had important applications in difference algebra~\cite{Chatzidakis_Hrushovski_1999,Hrushovski_2001,Medvedev_Scanlon_2014}.

One may ask whether this is also the case for existentially closed oags with an automorphism. An expert model theorist will notice at a glance that their simplicity is to be immediately ruled out, due to the presence of an infinite linear order, but that perhaps they may be \emph{dependent}. This means satisfying another tameness property, known as $\mathsf{NIP}$, strictly related to Vapnik--Chervonenkis dimension, hence to machine learning~\cite{Chase_Freitag_2019}, which had a crucial role, among other things, in the solution of Pillay's o-minimal conjectures~\cite{Hrushovski_Pillay_2011} and in the study of nowhere dense classes of graphs in computer science~\cite{nesetril_mendez_2016}. Classical examples of (non-simple) $\mathsf{NIP}$ theories include algebraically closed valued fields, the exponential ordered real field, fields of $p$-adic numbers, and the differential field of transseries~\cite{adh17}. The standard reference for the subject is~\cite{simon_2015}.

 Unfortunately, the expert model theorist will also recognise that the presence of an infinite linear order poses a more fundamental threat to the plan above: namely, by a theorem of Kikyo and Shelah~\cite{kikyo_strict_2002}, the class of existentially closed oags with an automorphism is not elementary.

 The latter obstruction may be bypassed by moving to \emph{positive logic}. Introduced in model theory in \cite{BY, BYP},  and known to the category theorists as \emph{coherent logic} (see~\cite{kamsma_type_2022} for the connection),  positive logic provides a framework that is broader than the  classical first-order one. In a certain precise sense, passing from classical to positive logic  corresponds to generalising from Boolean algebras to distributive lattices or, dually, from Stone spaces to spectral spaces~\cite{haykazyan_spaces_2019}. 

Generalisations of tameness properties to this setting began with Shelah pointing out~\cite{shelah_lazy_1975} that stability theory could be generalised from elementary classes to suitable classes of existentially closed structures. Pillay showed~\cite{pillay_forking_2000} that the same could be done for simplicity, and Ben Yaacov realised~\cite{ben-yaacov_simplicity_2003} that one may more generally work in positive logic. Recent work~\cite{DK, haykazyan_existentially_2021} extended to this larger framework the theory of Kim-independence in $\mathsf{NSOP}_1$ theories; a notion of an $\mathsf{NTP}_2$ theory was also considered in this context~\cite{haykazyan_existentially_2021, delbee_existentially_2021}. For more recent developments, see~\cite{dgk}.

 Our final \Cref{sec:posnip} initiates the development of $\mathsf{NIP}$ for positive theories, which was in fact the original motivation of our work. Besides the application of  \Cref{athm:AP},
this section may be read independently of the previous ones, and is structured as follows. After briefly recalling the setting in \Cref{sec:poslogprelim,sec:pilrob},
we show in \Cref{sec:nipbasics} that, by using a natural definition of $\mathsf{NIP}$, which is preserved by adding hyperimaginary sorts, basic results such as closedness under (positive) Boolean combinations, reduction to one variable, and equivalence with finiteness of alternation numbers generalise. In \Cref{sec:DLOA} we look at examples. First, for every group $G$, existentially closed $G$-actions by automorphisms on a dense linear order are positively dependent; this can be proven by using \cite[Th\'eor\`eme~4.4.19]{dealdama}, but we provide a quick direct argument. More interestingly, as anticipated, we also show positive dependence for existentially closed oags with an automorphism, by using \Cref{athm:AP}. 
\begin{alphthm}[\Cref{co:apthennip}]
  The class of existentially closed oags with an automorphism is $\mathsf{NIP}$ in the sense of positive logic.
\end{alphthm}
In \Cref{sec:invtp} we study invariant types, prove a version of Borel definability, and investigate two different notions of coheir extension. Under a mild assumption known as \emph{thickness}, we generalise the equivalence of $\mathsf{NIP}$ with the existence of at most $2^{\abs M}$  global $M$-invariant extensions of a fixed type over $M$.

\paragraph{Acknowledgements}
We thank P.~Freni and M.~Hils for the useful conversations. In particular, we thank the latter for a comment which led to \Cref{pr:famighnprp}. We are also grateful to Tomasz Rzepecki for an observation which made the argument in \Cref{claim:rho} more precise, and to Martin Ziegler for asking whether the ``moreover'' part of \Cref{thm:redtoR} held.

J.~Dobrowolski was supported by EPSRC Grant EP/V03619X/1 and by DFG project BA 6785/2-1.

R.~Mennuni was supported by the Italian research project  PRIN 2017: ``Mathematical Logic: models, sets, computability'' Prot.~2017NWTM8RPRIN and by the German Research Foundation (DFG) via HI 2004/1-1 (part of the French-German ANR-DFG project GeoMod) and under Germany’s Excellence Strategy EXC 2044-390685587, `Mathematics M\"unster: Dynamics-Geometry-Structure', is a member of the INdAM research group GNSAGA, and acknowledges the MIUR Excellence Department Project awarded to the Department of Mathematics, University of Pisa, CUP I57G22000700001.

\section{Preliminaries}\label{sec:prelim}
Throughout the paper, we denote the set of non-negative integers, with its natural order, by $\omega$.
\subsection{Ordered abelian groups and their automorphisms}

Recall that an \emph{ordered abelian group} (abbreviated as \emph{oag}) is an abelian group, which we will write additively, equipped with a linear order $<$ such that, for all $x,y,z$, if $x<y$ then $x+z<y+z$.

If $A$ is an oag and $a\in A$ we write, as usual, $\abs a$ for $a$ if $a\ge 0$ and $-a$ otherwise. An oag is \emph{Archimedean} iff, whenever $a,b\in A\setminus \set 0$, there is $n\in \omega$ such that $\abs a\le n  \abs b$. 

\begin{eg}
The oag $A\coloneqq (\mathbb Q + \sqrt 2 \mathbb Q, +, <)$ is Archimedean. The map $\sigma(x)\coloneqq \sqrt 2   x$ is an automorphism of $A$.
\end{eg}

A classical theorem of H\"older says that, for Archimedean oags, the example above is typical:
\begin{fact}[H\"older]\label{fact:holder}
  Up to isomorphism, the Archimedean oags are precisely the additive subgroups of $\mathbb R$, with the induced order. Moreover, every automorphism of such an oag is given by multiplication by a positive real number.
\end{fact}
A natural source of non-Archimedean oags is the following construction.
\begin{defin}Let $A$, $B$ be oags. The \emph{lexicographical product} $A\times_\mathrm{lex} B$ is the oag with underlying abelian group $A\times B$, ordered by setting $(a,b)>0$ iff either $a>0$, or $a=0$ and $b>0$. 
\end{defin}
One may iterate this construction, which is associative up to isomorphism. Versions with infinitely many factors are also possible, see \Cref{defin:hahnproduct} below.

If $A$ is an oag, its \emph{spine} is the set $I$ of its non-trivial principal convex subgroups, that is, convex hulls of non-trivial cyclic subgroups, ordered by reverse inclusion. The \emph{Archimedean valuation}\footnote{All valuation-theoretic notions in this paper, e.g.~skeleton, immediate extension, are with respect to the Archimedean valuation, the only valuation that we will use.} is the map $v\from A\to I\cup \set{\infty}$ sending $0$ to $\infty$ and every  $a\in A\setminus \set 0$ to the convex subgroup it generates. If $i\in I$, the \emph{rib} $A_i$ is the quotient $\set{a\in A: v(a)\ge i}/\set{a\in A: v(a)>i}$. When equipped with the order it inherits from $A$, it is a non-trivial Archimedean oag. The \emph{skeleton} $(I, (A_i: i\in I))$ of $A$ is given by its spine $I$ together with the $I$-sequence of its ribs. Note that, if $a,b\ne 0$, then  $v(a)>v(b)$ if and only if  $\abs a$ is much smaller than $\abs b$, that is, for all $n\in\omega$ we have $n  \abs a<\abs{b}$. In this case, we write $a\ll b$. We write $a\asymp b$ for $v(a)=v(b)$, and call $a/\asymp$ the \emph{Archimedean class} of $a$. We write  $a\sim b$ for $a-b\ll b$. It is (standard and) easily checked that $\sim$ is an equivalence relation on $A\setminus \set 0$. 

These notions also make sense for ordered vector spaces, that is, oags which are also equipped with the structure of a vector space over an ordered field $R$ in such a way  that scalar multiplication by positive $\lambda\in R$ preserves signs. In what follows, for simplicity, we adopt the convention below. We are mostly interested in the cases $R=\mathbb Q$ and $R=\mathbb R$, although a (positive) number of statements goes through in greater generality. 
 \begin{ass}
Except when otherwise stated, every ordered field $R$ in this paper will be Archimedean, and every ordered vector space will be over an Archimedean ordered field.
 \end{ass}
Ordered vector spaces over the reals are particularly well-behaved. For instance,  because ribs are non-trivial Archimedean oags, each of the ribs of an ordered $\mathbb R$-vector space must be isomorphic to $\mathbb R$. Moreover, we have the following easy consequence of the completeness of $\mathbb R$. 
 \begin{rem}\label{rem:asymptotics}
In every ordered $\mathbb R$-vector space, if $a,b\ne 0$, then $a\asymp b$ if and only if there is some $\lambda\in \mathbb R$ such that $a\sim \lambda  b$.
\end{rem}

\begin{defin}\label{defin:hahnproduct} Let $I$  be a linear order and $(A_i: i\in I)$ an $I$-sequence of non-trivial oags.
\begin{enumerate}
  \item The \emph{Hahn product}  $\operatorname{H}(I,(A_i: i\in I))$ is the oag with underlying abelian group the subgroup of $\prod_{i\in I} A_i$ given by those functions $a$ whose \emph{support} $\supp(a)\coloneqq\set{i\in I: a(i)\ne 0}$ is well-ordered, ordered by setting $a>0$ iff $a(\min (\supp(a)))>0$. 
  \item If all $A_i$ are equal to $B$, we write $B((I))$.
  \item The \emph{Hahn sum} $\coprod_{i\in I} A_i$ is the subgroup of $\operatorname{H}(I, (A_i: i\in I))$ given by those elements which have finite support, equipped with the induced order.
  \end{enumerate}
\end{defin}
\begin{eg}\label{eg:laurent}
If $I= \mathbb Z$ and each $A_i$ equals $\mathbb R$, then $\operatorname{H}(\mathbb Z, (\mathbb R : i \in \mathbb Z))=\mathbb R((\mathbb Z))$ is precisely the ordered group of (formal) real Laurent series in one variable $\mathbb R((t))$, ordered in such a way that $t<1$. Informally, they may be thought of as series expansions for $t\to 0$. The corresponding Hahn sum $\coprod_{i\in \mathbb Z} \mathbb R$ is the ordered subgroup $\mathbb R[t, t\inverse]$ of real Laurent polynomials in one variable.
\end{eg}
Power series notation is commonly used to denote elements of Hahn products (or Hahn sums): we write $\sum_{i\in I} a_i t^i$ for the function $a$, with $a_i\coloneqq a(i)$. If $a\ne 0$ and $i=\min\supp(a)$, we call $a_it^i$ the \emph{leading term} of $a$ and $a_i$ its \emph{leading coefficient}. Observe that the sign of $a$ equals the sign of its leading coefficient, and that if $a,b$ are non-zero elements of a Hahn product of non-trivial Archimedean oags  then $a\sim b$ if and only if $a$ and $b$ have the same leading term. As the notation suggests, if $(A_i:i\in I)$ is an $I$-sequence of non-trivial Archimedean oags, then the skeleton of the Hahn product $\operatorname{H}(I,(A_i: i\in I))$ is indeed $(I,(A_i: i\in I))$. These constructions too may be carried out for ordered vector spaces.

Recall that every oag $A$ embeds in its \emph{divisible hull}, which we may regard as the $\mathbb Q$-vector space $A\otimes_{\mathbb Z} \mathbb Q$, with the unique order extending that of $A$ and compatible with addition. Observe that this order is automatically preserved by multiplication by positive rationals. Moreover, every oag, or ordered vector space, embeds into the Hahn product of its skeleton, see~\cite[Proposition~2.3.4]{adh17}.   If $A$ is an oag with spine $I$, by first passing to its divisible hull, viewed as an ordered $\mathbb Q$-vector space, then embedding the latter into the Hahn product over its skeleton, and finally enlarging all the ribs to $\mathbb R$, we obtain an embedding of $A$ into $\mathbb R((I))$, and recover the classical Hahn Embedding Theorem. Observe that $\mathbb R((I))$ is naturally equipped with the structure of an ordered $\mathbb R$-vector space.

 An extension $A\subseteq B$ of oags, or of ordered vector spaces, is called \emph{immediate} iff $A$ and $B$ have the same skeleton, and $A$ is called \emph{maximally complete} iff it is non-trivial and has no proper immediate extensions. A maximally complete immediate extension of a non-trivial $A$ always exists: it suffices to take the Hahn product of its skeleton. Such an extension is unique, in the following sense.
\begin{fact}[{\cite[Theorem~0.26]{kuhlmann_ordered_2000}}]\label{fact:HET}
Let $\sigma\from A\to A'$ be an isomorphism of non-trivial ordered vector spaces, and let $A\subseteq B$ and $A'\subseteq B'$ be immediate extensions. If $B'$ is maximally complete, then $\sigma$ extends to an embedding $\sigma'\from B\to B'$. The map $\sigma'$ is an isomorphism if and only if $B$ is maximally complete.
\end{fact}

\begin{eg}\label{eg:laurentauto} 
The map $\sigma(\sum_{i\geq k} a_i t^i)= \sum_{i\geq k} 3a_i t^{i+5}$ is an automorphism of $\mathbb R((t))$, and it restricts to an automorphism of $\mathbb R[t,t\inverse]$.

More generally, given a linearly ordered set $I$ and oags $(A_i)_{i\in I}$,  if $g$ is an automorphism of $I$ and, for every $i\in I$,  we have an isomorphism of oags $g_i\from A_i\to A_{g(i)}$, then $\sigma\from  \operatorname{H}(I,(A_i: i\in I))\to 
\operatorname{H}(I,(A_i: i\in I))$ given by 
$\sigma(\sum_{i\in I} a_i t^i)= \sum_{i\in I} g_i(a_i)t^{g(i)}$ is an automorphism of the oag $\operatorname{H}(I,(A_i: i\in I))$, and restricts to an automorphism of $\coprod_{i\in I} A_i$. Not all automorphisms of Hahn products are of this form, e.g.~the one described in \Cref{eg:nolift} is not, and not all of them fix the Hahn sum subgroup setwise. See \cite{Kuhlmann_Serra_2022, ks_groups} for more on automorphisms of Hahn products.
\end{eg}

We conclude this subsection by recalling the following fact, which follows for instance from \cite[Corollaries~0.5 and~0.14, Proposition~0.25]{kuhlmann_ordered_2000}.
\begin{fact}\label{fact:ctblesep}
  Every countable dimensional ordered vector space is isomorphic to a Hahn sum of Archimedean ordered vector spaces.
\end{fact}

\subsection{A bit of model theory}
We briefly recall some standard facts from model theory. For the benefit of the reader not familiar with the subject, we begin with some examples, and refer such a reader to any standard introductory book, e.g.~\cite{hodges}, for precise definitions and more details.

The \emph{language of ordered abelian groups} is $L_\mathrm{oag}\coloneqq\set{+,0,-,<}$, where $0$ is a constant symbol, $+$ a binary function symbol, $-$ a unary function symbol, and $<$ a binary relation symbol. Each oag may be viewed as an \emph{$L_\mathrm{oag}$-structure} by \emph{interpreting} the symbols of $L_\mathrm{oag}$ in the way suggested by the notation. Every language contains a binary relation symbol $=$ by default, that must be interpreted in every structure as equality, that is, as the diagonal.

An example of a \emph{formula} in $L_\mathrm{oag}$ is $\phi(x,z)\coloneqq(\exists y\; x+z+0<y)\vee (x=z)$, where $\vee$ denotes (inclusive) disjunction; similarly, $\wedge$ denotes conjunction, and $\neg$ negation. The previous formula  has \emph{free variables} $x,z$, and the set of its solutions is a \emph{definable set}. A \emph{definable function} is a function whose graph is a definable set. A \emph{sentence} is a formula with no free variables. If $\phi$ is a sentence and $A$ a structure, we write e.g.~$A\models \phi$ to say that $\phi$  is \emph{satisfied} in $A$, that is, it is true in $A$. If for example $\phi(x)$ has a single free variable $x$, and $a\in A$, we write $A\models \phi(a)$, with the obvious meaning. If the ambient structure $A$ is clear from context, we also write $\models \phi(a)$, or $a\models \phi(x)$. Similar conventions apply when $x$ and $a$ are finite tuples.

A \emph{theory} $T$ is a set of sentences which is \emph{consistent}, that is, which has a \emph{model}: a structure where all $\phi\in T$ are satisfied; we write $A\models T$. For instance, the \emph{theory of oags} is the set of those $L_\mathrm{oag}$-sentences expressing the axioms of oags, e.g.~$\forall x,y,z\; ((x<y)\implica (x+z<y+z))$. An $L$-theory is \emph{complete} iff it is maximal amongst $L$-theories; equivalently, it is of the form $\set{\phi: A\models \phi}$ for some $L$-structure $A$. We will employ repeatedly the following standard fact; its use will usually be implicit, or announced by simply saying ``by compactness''.
\begin{fact}[Compactness Theorem]
  A set of sentences is consistent if and only if each of its finite subsets is consistent.
\end{fact}
An \emph{embedding} of $L$-structures $A\to B$ is an injective function commuting with the interpretations of the symbols of $L$. For example, the $L_\mathrm{oag}$-embeddings  between oags are precisely the strictly increasing group homomorphisms. In this paper, whenever we mention a category of structures, we implicitly mean that the arrows are embeddings in the appropriate language, unless otherwise stated.

A \emph{substructure} $A$ of $B$ is a subset of $B$, closed under the functions and constants of $L$, such that the inclusion map $A\to B$ is an embedding. We also say that $B$ is an \emph{extension} of $A$. If $A$, $B$ are $L$-structures, we write $A\subseteq B$ to mean that $A$ is a substructure of $B$.

If $A$ is an $L$-structure, we may expand $L$ to a language $L_A$ by adding a constant symbol $c_a$ for every $a\in A$, and make $A$ into an $L_A$-structure in the natural way. The \emph{diagram} $\operatorname{Diag}(A)$ is the set of all quantifier-free $L_A$-sentences which are true in $A$. It is easy to see that  $B\models\operatorname{Diag}(A)$ if and only if the map $A\to B$ sending $a$ to the interpretation of $c_a$ in $B$ is an embedding. We summarise this as follows for later reference.
\begin{fact}\label{fact:diagram}
  Models of $\operatorname{Diag}(A)$ may be identified with extensions of $A$.
\end{fact}

\begin{defin}
Let $R$ be an ordered field.
  \begin{enumerate}
  \item We set $L_R\coloneqq L_\mathrm{oag}\cup \set{\lambda\cdot-: \lambda\in R}$ and $L_{R, \sigma}\coloneqq L_R\cup \set{\sigma, \sigma\inverse}$, where $\sigma$, $\sigma\inverse$, and all of the $\lambda\cdot-$ are unary function symbols.
  \item Let $R\mathsf{-OVS}$ be the $L_R$-theory of non-trivial ordered $R$-vector spaces.
  \item  Let $R\mathsf{-OVSA}$ be the $L_{R,\sigma}$-theory of (possibly trivial) ordered $R$-vector spaces with an $L_R$-automorphism\footnote{That is, an $R$-linear strictly increasing bijection.} $\sigma$ and its inverse $\sigma\inverse$.
  \end{enumerate}
\end{defin}
Note that an $L_{R, \sigma}$-embedding is the same as an $R$-linear strictly increasing map commuting with $\sigma$ (and $\sigma\inverse$).

The theory $R\mathsf{-OVS}$ is complete and \emph{o-minimal}, see \cite[Chapter 1, Corollary~7.6]{van_den_dries_tame_1998}. 
Besides a somehow implicit use in \Cref{thm:ivp}, we will only exploit o-minimality once by invoking the \emph{Cell Decomposition Theorem} in the proof of \Cref{lemma:trans_AP}; for this reason, we will not define it, and refer the reader to the aforementioned source.

A key role in what follows will be played by the notion of an \emph{existentially closed} structure. We recall its definition, specialised to the current setting.
\begin{defin}\label{defin:ec}
We call $M\models R\mathsf{-OVSA}$ \emph{existentially closed} iff, for every quantifier-free $L_{R,\sigma}$-formula $\phi(\bla x0,n, \bla y0,k)$ and each tuple  $(\bla a0,n)$ from $M$, if there is some $B\models R\mathsf{-OVSA}$ with $B\supseteq M$ containing $\bla b0,k$ such that  $B\models \phi(\bla a0,n, \bla b0,k)$, then there are $\bla m0,k\in M$ such that  $M\models  \phi(\bla a0,n, \bla m0,k)$.
\end{defin}
The following is a standard consequence of the fact that the category of models of $R\mathsf{-OVSA}$ with embeddings is closed under direct limits or, equivalently, that $R\mathsf{-OVSA}$ can be axiomatised by $\forall\exists$-sentences, see~\cite[Section~8.2]{hodges}.
\begin{fact}
  Every model of $R\mathsf{-OVSA}$ embeds into an existentially closed one.
\end{fact}
The following remark will give us a first glimpse of these structures. Its proof is a typical example of how one shows that existentially closed structures satisfy certain properties: it suffices to build larger structures that do.
\begin{rem}\label{rem:aspfp}
Every existentially closed $M\models \mathbb R\mathsf{-OVSA}$ contains arbitrarily small positive fixed points of $\sigma$.
\end{rem}
\begin{proof}
Consider the formula $\phi(x,y)\coloneqq (0<y<x)\wedge (\sigma(y)=y)$.  We need to show that, for every positive $a\in M$, there is $m\in M$ with $M\models \phi(a,m)$. By \Cref{defin:ec}, it suffices to build an extension $B$ of $M$ containing a fixed point between $0$ and $a$. This is easily done by taking as $B$ the oag $M\times_\mathrm{lex} \mathbb R$ together with the unique extension of $\sigma$ which is the identity on  $\set{0}\times\mathbb R$.
\end{proof}
 Analogously, one shows that $M$ contains arbitrarily large fixed points, arbitrarily small positive $x$ such that $\sigma(x)\ll x$, or such that $\sigma(x)=\sqrt{7}  x$, etc. Similarly, it can be shown that no interval $[a,b]\subseteq M$ may consist entirely of fixed points: it suffices to take $M\times_\mathrm{lex}\mathbb R$, make $\sigma$ act non-identically on $\set{0}\times\mathbb R$, and consider $(a,\epsilon)$ for an arbitrary positive $\epsilon\in \mathbb R$.

\section{Realification}\label{sec:realification}
Working with (automorphisms of) ordered real vector spaces, instead of arbitrary oags, will allow us to exploit a number of desirable properties. For instance, we will frequently use the following consequence of \Cref{rem:asymptotics}.
\begin{rem}\label{rem:asymptsigma}
For every $a\in A\models \mathbb R\mathsf{-OVSA}$, either $\sigma(a)\gg a$, or $\sigma(a)\ll a$, or there is a positive real $\rho$ such that $\sigma(a)\sim \rho  a$. Moreover, if $\sigma(a)\sim \rho  a$, then, for every $b$ in the convex hull of the $\sigma$-orbit of $a$, we also have $\sigma(b)\sim \rho  a$, and analogously if $\sigma(a)\ll a$ or $\sigma(a)\gg a$. 
\end{rem}

\begin{eg} 
Let $F\supseteq \mathbb{R}$ be a non-archimedean ordered field extending the field of real numbers. Note that $F$ admits a natural structure of an ordered $\mathbb{R}$-vector space.   Let $a\in F$ with $a>0$. Define $\sigma$ on $F$ by $\sigma(x)\coloneqq ax$ for every $x\in F$. Then $\sigma$ is an automorphism of the ordered $\mathbb{R}$-vector space $F$ and:
 \begin{itemize}
 \item if $a>\mathbb R$ then $\sigma(x)\gg x$ for every $x\neq 0$, 
\item if $a$ is a positive infinitesimal (i.e.~$0<a<1/n$ for every $n\in \omega\setminus \{0\}$),  then $\sigma(x)\ll x$ for every $x\neq 0$,
 \item if none of the above holds, then there is a \emph{standard part} $\rho$ of $a$, that is, a unique real number $\rho$ for which $\abs{a-\rho}$ is an infinitesimal. Such a $\rho$ must be positive because $a$ is, and  $\sigma(x)\sim \rho  x$ for every $x$. 
\end{itemize}
 \end{eg} 

Motivated by \Cref{rem:asymptsigma}, in this section we reduce an amalgamation problem of oags with an automorphism to one of ordered $\mathbb R$-vector spaces with an automorphism.  The first, easy step, consists in assuming all of our oags to be divisible, by passing to their divisible hull.  Elements of the divisible hull of $A$ are of the form $a/n$, for suitable $a\in A$ and $n>0$, and it follows easily that the divisible hull is unique up to \emph{unique} isomorphism over $A$. This has the consequence below.
\begin{rem}\label{rem:oagtoqovs}
Every automorphism $\sigma$ of $A$ extends uniquely to an automorphism $\sigma'$ of its divisible hull. The latter is naturally equipped with the structure of an ordered $\mathbb Q$-vector space, and this structure is preserved by $\sigma'$.
\end{rem}
This, in turn, is easily seen to imply the following.
\begin{rem}\label{rem:oagstoqovs}
Every amalgamation problem of oags with an automorphism reduces to one of ordered $\mathbb Q$-vector spaces with an ordered $\mathbb Q$-vector space automorphism.
\end{rem}

Suppose now that $A$ is an ordered $\mathbb Q$-vector space, with an automorphism $\sigma_A$. By the Hahn Embedding Theorem, if $I$ is the spine of $A$, we may embed $A$ into the Hahn group $B\coloneqq\mathbb R((I))$. It is also possible to show\footnote{For example by using~\cite[Proposition~13.12]{hhm}.} that $\sigma_A$ can be lifted to an automorphism $\sigma_B$ of $B$ \emph{as an ordered $\mathbb Q$-vector space}.  Unfortunately, as soon as $I$ has size at least $2$, it may happen that no lift $\sigma_B$ preserves the $\mathbb R$-vector space structure.
\begin{eg}\label{eg:nolift}
  Let $A\coloneqq ((\mathbb Q + \sqrt 2 \mathbb Q) \times_\mathrm{lex} \mathbb Q)$, equipped with the automorphism $\sigma_A((a+\sqrt 2 b,c))=(a+\sqrt 2 b, c+b)$. Since $A$ has two Archimedean classes, the Hahn Embedding Theorem gives us an embedding  of ordered $\mathbb Q$-vector spaces $\iota\from A\to B\coloneqq\mathbb R\times_\mathrm{lex}  \mathbb R$, and a lift $\sigma_B$ of $\sigma_A$ to $B$. For every $\iota$ and $\sigma_B$ as above, the latter is not an isomorphism of $\mathbb R$-vector spaces.
\end{eg}
\begin{proof}
Towards a contradiction, assume $\iota$ and $\sigma_B$ are given, with the former commuting with $\sigma_-$ and the latter with multiplication by every real number. It is routine to check that we must have $\sqrt 2\cdot \iota(1,0)\sim \iota(\sqrt 2, 0)$, but that the two cannot be equal, since the former must be a fixed point of $\sigma_B$, but the latter is not. Since there are only two Archimedean classes in $B$, there must be $d\in \mathbb R$ with $\sqrt 2\cdot \iota(1,0)-\iota(\sqrt 2, 0)= (0, d)$. Again because there are only two Archimedean classes, there must be a positive $e\in \mathbb R$ with $\iota(0,1)=(0,e)$. Since this is a fixed point of $\sigma_B$, which is an isomorphism of real vector spaces, so is $(0,d)=\frac de\cdot (0,e)$. But then the non-fixed point $\iota(\sqrt 2, 0)$ is the difference of two fixed points, namely  $\sqrt 2\cdot \iota(1,0)$ and $(0, d)$, a contradiction.
\end{proof}
In the proof above, we used that $B$ had only two Archimedean classes. This was in fact crucial: we leave it to the reader to check that if we embed $A$ into the lexicographically ordered $\mathbb R^3$ as $(a+\sqrt 2b,c)\mapsto (a+\sqrt 2b,b,c)$ then we may lift $\sigma_A$ to an ordered $\mathbb R$-vector space automorphism. In fact, adding extra Archimedean classes, together with a dash of compactness, allows us to prove the following general statement. 
\begin{thm}\label{thm:redtoR}
  Let $A$ be an oag. There is an ordered $\mathbb R$-vector space $B$ and an embedding of oags $A\to B$ such that every automorphism of $A$ as an oag extends to an automorphism of $B$ as an ordered $\mathbb R$-vector space. Moreover, this can be done in such a way that the map sending each automorphism to its extension is an embedding of groups $\aut(A)\to \aut_{\mathbb R}(B)$.
\end{thm}
\begin{proof}
  Let $L$ be the language of oags, together with a unary function symbol for every $\sigma\in \aut(A)$ and a constant symbol for every $a \in A$.  View $A$ as an $L$-structure in the natural way. By \Cref{fact:diagram}, in order to prove the first part of the theorem it suffices to show that $\operatorname{Diag}(A)\cup \mathbb R\mathsf{-OVS}$, together with the (infinitely many) sentences expressing that each $\sigma$ is an ordered $\mathbb R$-vector space automorphism, is consistent. In order to also prove the ``moreover'' part, we enlarge this set of sentences as follows. For every $\sigma_0,\sigma_1,\sigma_2\in \aut(A)$ such that $\sigma_0\sigma_1=\sigma_2$, we add the sentence $\forall x\; \sigma_0(\sigma_1(x))=\sigma_2(x)$.

 By compactness, we reduce to proving consistency of a finite subset of the above, which may only mention a finite set, say $\Sigma_0$, of automorphism  symbols, and a finite set $A'\subseteq A$. Let $L'$ be the expansion of $L_{\mathrm{oag}}$ by the symbols in $\Sigma_0$. The divisible hull of the $L'$-structure generated by $A'$ is a $\mathbb Q$-vector space of at most countable dimension, and our automorphisms extend uniquely to it by \Cref{rem:oagtoqovs}. Using \Cref{fact:ctblesep}, we reduce to the case where $A$ is a Hahn sum $\coprod((A_i)_{i\in I}, I)$, where $I$ is a countable linear order and every $A_i$ is an Archimedean ordered $\mathbb Q$-vector space, equipped with a finite set $\Sigma_0$ of automorphisms. For each $i$, fix an embedding of $A_i$ in $\mathbb R$, identify $A_i$ with its image under this embedding, and do so in such a way that if $A_i\cong A_{i'}$ then $A_i=A_{i'}$.

    By~\cite[Corollary~3.21]{ks_groups}\footnote{This is stated for automorphisms which are only required to preserve the valuation, not necessarily the order. Nevertheless, since internal automorphisms preserve leading terms, they preserve the order, hence $\sigma_\mathrm{i}$ is an automorphism of oags  (hence automatically of ordered $\mathbb Q$-vector spaces) and it follows that so is $\sigma_\mathrm{e}=\sigma_\mathrm{i}\inverse\sigma$.} we may decompose each $\sigma\in \Sigma$ as $\sigma=\sigma_\mathrm{i}\sigma_\mathrm{e}$, where $\sigma_\mathrm{i}$ is \emph{internal}, that is, it induces the identity on the skeleton $(I, (A_i: i\in I))$ of $A$,  or in other words preserves the leading term of each $\sum_{i\in I} a_i t^i\in A$, and 
$\sigma_\mathrm{e}$ is \emph{external}, i.e.~of the form $\sigma_\mathrm{e}(\sum_{i\in I} a_i t^i)= \sum_{i\in I} g_i(a_i)t^{g(i)}$, for suitable isomorphisms $g\from I\to I$ and $g_i\from A_i\to A_{g(i)}$.
  The maps $g_i\from A_i\to A_{g(i)}$ are isomorphisms of Archimedean ordered $\mathbb Q$-vector spaces, and by construction $A_i=A_{g(i)}\subseteq \mathbb R$.  By \Cref{fact:holder} we may identify each $g_i$ with multiplication by a positive real number $\rho_i$, hence each external $\sigma$ has the form\footnote{We encountered such an automorphism in \Cref{eg:laurentauto}.}
  \begin{equation}
    \label{eq:genericexternalsigma}
    \sigma\left(\sum_{i\in I} a_i t^i\right)= \sum_{i\in I} \rho_ia_it^{g(i)}
  \end{equation}
 By enlarging $\Sigma_0$ if necessary, we may assume that whenever it contains $\sigma$ it also contains some $\sigma_\mathrm{i}$ and $\sigma_\mathrm{e}$ as above.

  Let $J$ be a dense linear order with no endpoints extending $I$ which is \emph{$\aleph_1$-saturated}: that is, such that every countable sequence of intervals with the finite intersection property has non-empty intersection.  We define an $L_\mathrm{\mathbb Q, \sigma}$-embedding $f\from A\to \mathbb R((J))$ as follows.  Recall that, for every $i\in I$, the dimension  $\dim_{\mathbb Q}(A_i)$ of $A_i$ as a $\mathbb Q$-vector space is either finite or $\aleph_0$. For each $i\in I$, choose a basis $\set{b_{i,j}: j<\dim_{\mathbb Q}(A_i)}$ of $A_i$ as a $\mathbb Q$-vector space, making sure to choose the same basis \emph{indexed in the same order} whenever $A_i=A_{i'}$. For each $i\in I$ use $\aleph_1$-saturation of $J$ to find an increasing sequence $(e_{i,j})_{j<\dim_{\mathbb Q}(A_i)}$ such that $e_{i,j}\models \set{i<x<i': i'\in I, i<i'}$, and set, in power series notation, $\epsilon_{i,j}\coloneqq t^{e_{i,j}}$. Define an embedding $f\from A\to \mathbb R((J))$ as the $\mathbb Q$-linear extension of the map $b_{i,j}t^i\mapsto b_{i,j}t^i+\epsilon_{i,j}$. It is routine to verify that the  map $f$ is an embedding of $\mathbb Q$-vector spaces. Moreover, it maps positive elements to positive elements, as is easily checked by observing that the leading terms of $a$ and of $f(a)$ are the same. Therefore, $f$ is an embedding of ordered $\mathbb Q$-vector spaces. 
Let $B$ be the real vector space generated by $f(A)$ in $\mathbb R((J))$.

For each $\sigma\in \Sigma_0$, we now extend $f\sigma f\inverse$  from $f(A)$ to an automorphism $\tilde \sigma$ of $B$ as an ordered $\mathbb R$-vector space. For notational simplicity, if $c\in f(A)$, write $\sigma(c)$ in place of $f(\sigma(f\inverse(c)))$. Every element of $B$ may be written as
\begin{equation}
  b=\sum_{\substack{i\in I\\ j<\dim_{\mathbb Q}(A_i)}} \lambda_{i,j}(b_{i,j}t^i+\epsilon_{i,j})\label{eq:genericelementofB}
\end{equation}
where only finitely many $\lambda_{i,j}$ are non-zero and  $\lambda_{i,j}\in \mathbb R$. The presence of the $\epsilon_{i,j}$, ensures that such a writing is unique. In other words, $f$ induces an isomorphism of $\mathbb R$-vector spaces $A\otimes_{\mathbb Q} \mathbb R\to B$. Since $\mathbb Q$ is a field, the functor $-\otimes_{\mathbb Q} \mathbb R$ is exact, yielding a natural extension $\tilde\sigma$ of $\sigma$ to an automorphism of the $\mathbb R$-vector space $A\otimes_{\mathbb Q} \mathbb R$, henceforth identified with $B$. After this identification, for $b$ as in~\eqref{eq:genericelementofB}, we have the concrete description
\begin{equation}\label{eq:generictsigmab}
  \tilde\sigma(b)=\sum_{\substack{i\in I\\ j<\dim_{\mathbb Q}(A_i)}} \lambda_{i,j}(\sigma(b_{i,j}t^i+\epsilon_{i,j}))
\end{equation}
From this, it follows immediately that, for every $\sigma\in \Sigma_0$, we automatically have $\tilde\sigma_\mathrm{i}\tilde\sigma_\mathrm{e}=\tilde \sigma$. Similarly,
 whenever $\sigma_0\sigma_1=\sigma_2$, we automatically have $\tilde\sigma_0\tilde\sigma_1=\tilde\sigma_2$.

 To conclude, we need to check that, for every $\sigma\in \Sigma_0$, the $\mathbb R$-linear bijection $\tilde\sigma$ preserves the order. Because we added $\sigma_\mathrm{i}, \sigma_\mathrm{e}$ to $\Sigma_0$ and checked that $\tilde\sigma_\mathrm{i}\tilde\sigma_\mathrm{e}=\tilde \sigma$, it suffices to prove this in the special cases where $\sigma$ is internal or external.

Assume $\sigma$ is internal, write a generic $b\in B$ as in~\eqref{eq:genericelementofB}, and recall that, if $b\ne 0$, then its sign is decided by its leading coefficient. Let $i_0$ be the minimum $i$ such that some $\lambda_{i,j}$ is non-zero, and let $j_0$ be minimum such that $\lambda_{i_0, j_0}$ is non-zero. We have two cases.
\begin{enumerate}
\item If $\sum_{j<\dim_{\mathbb Q}(A_{i_0})} \lambda_{i_0,j} b_{i_0, j}\ne 0$, then  $\left(\sum_{j<\dim_{\mathbb Q}(A_{i_0})} \lambda_{i_0,j} b_{i_0, j}\right) t^{i_0}$ is the leading term of  $b$. Since $\sigma$ is internal, it follows from~\eqref{eq:generictsigmab} that this is also the leading term of $\tilde \sigma(b)$, which therefore has the same sign as $b$.
\item Otherwise,  the leading term of $b$ is $\lambda_{i_0, j_0} \epsilon_{i_0, j_0}$. Again because $\sigma$ is internal, and by choice of the $\epsilon_{i,j}$, a straightforward calculation shows that this is also the leading term of $\tilde \sigma(b)$, which, again, has the same sign as $b$.
\end{enumerate}
Therefore, in the case where $\sigma\in \Sigma_0$ is internal,  $\tilde\sigma$ is an ordered $\mathbb R$-vector space automorphism of $B$ extending $\sigma$.

Assume now that $\sigma$ is external and write it as in \eqref{eq:genericexternalsigma}. Then, if $b$ is as in~\eqref{eq:genericelementofB}, by combining \eqref{eq:genericexternalsigma} with \eqref{eq:generictsigmab},
\[
  \tilde\sigma(b)= \sum_{\substack{i\in I\\ j<\dim_{\mathbb Q}(A_i)}} \lambda_{i,j}(\rho_i(b_{i,j}t^{g(i)}+\epsilon_{g(i),j}))
\]
The argument to show that $\tilde\sigma$ preserves the order is similar to the internal case: again, let $i_0$ be the minimum $i$ such that some $\lambda_{i,j}$ is non-zero, and let $j_0$ be minimum such that $\lambda_{i_0, j_0}$ is non-zero.
\begin{enumerate}
\item If $\sum_{j<\dim_{\mathbb Q}(A_{i_0})} \lambda_{i_0,j} b_{i_0, j}\ne 0$, then $\left(\sum_{j<\dim_{\mathbb Q}(A_{i_0})} \lambda_{i_0,j} b_{i_0, j}\right) t^{i_0}$ is the leading term of $b$. It follows immediately that the leading term of $\tilde \sigma(b)$ equals $\left(\rho_{i_0} \sum_{j<\dim_{\mathbb Q}(A_{i_0})} \lambda_{i_0,j} b_{i_0, j}\right) t^{g(i_0)}$, and we conclude since $\rho_{i_0}$ is positive.
\item Otherwise, the leading term of $b$ is $\lambda_{i_0, j_0} \epsilon_{i_0, j_0}$. 
Since in our construction we used the same basis for $A_{i_0}$ and $A_{g(i_0)}$, indexed in the same order, it follows that the map $\epsilon_{i_0,j}\mapsto \epsilon_{g(i_0), j}$ is increasing, and therefore the leading term of $\tilde \sigma(b)$ is $\rho_{i_0}\lambda_{i_0, j_0} \epsilon_{g(i_0), j_0}$. Again, we conclude since $\rho_{i_0}$ is positive.
\end{enumerate}
This shows that, when $\sigma\in \Sigma_0$ is external,  $\tilde\sigma$ is an ordered $\mathbb R$-vector space automorphism of $B$, thereby concluding the proof.
\end{proof}

\begin{co}\label{co:redrovsa}
    Let $A\models \mathbb Q\mathsf{-OVSA}$. Then there are $B\models \mathbb R\mathsf{-OVSA}$ and an $L_{\mathbb Q,\sigma}$-embedding $A\to B$.
\end{co}

\begin{pr}\label{pr:Rreduction}
 If the category of models of $\mathbb R\mathsf{-OVSA}$ with maps the $L_{\mathbb R, \sigma}$-embeddings has the AP, then so does the category of models of $\mathbb Q\mathsf{-OVSA}$ with maps the $L_{\mathbb Q, \sigma}$-embeddings.
\end{pr}
\begin{proof}
	\color{blue}
 Given an amalgamation problem $C\leftarrow A\to B$ of models of $\mathbb Q\mathsf{-OVSA}$, we produce an  amalgamation problem $C'\leftarrow A'\to B'$ of models of $\mathbb R\mathsf{-OVSA}$, together $L_{\mathbb Q,\sigma}$-embeddings $A\to A'$, $B\to B'$ and $C\to C'$ commuting with those in said amalgamation problems.

Take the reduct of $C\leftarrow A\to B$ to $L_{\mathbb Q}$; since $\mathbb Q\mathsf{-OVS}$ eliminates quantifiers, there is a solution $D_0$ of the resulting amalgamation problem. Let $D_1$ be a $\abs{D_0}^+$-strongly homogeneous elementary extension of $D_0$, and identify the $L_{\mathbb Q}$-reducts of $A,B,C$ with the corresponding substructures of $D_1$.

Again by quantifier elimination, $\sigma_B$ is a partial elementary map from $D_1$ to itself. By strong homogeneity, we extend $\sigma_B$ to an automorphisms $\sigma_{B_1}$ of $D_1$, and set $B_1\coloneqq(D_1,\sigma_{B_1})$. By running the same argument with $B$ replaced by $C$, we analogously obtain a structure $C_1\coloneqq(D_1,\sigma_{C_1})$, on the same underlying ordered $\mathbb Q$-vector space $D_1$.

Apply Theorem \ref{thm:redtoR} to $D_1$, obtaining a real vector space $D_2$ extending $D_1$, together with extensions of every automorphism of $D_1$ to one of $D_2$. Let $\sigma_{B_2}$ and $\sigma_{C_2}$ be the resulting extensions of  $\sigma_{B_1}$ and $\sigma_{C_1}$ respectively.

Observe that $\sigma_{B_2}$ and $\sigma_{C_2}$ both restrict to $\sigma_A$ on $A$, hence they have the same restriction  $\sigma_{A'}$ to the real vector space $A'$ it generates. By construction, $\sigma_{A'}\left(\sum_i r_i a_i\right)=\sum_i r_i \sigma_A(a_i)$, hence $\sigma_{A'}(A')\subseteq A'$. 
Similarly, as $\sigma_{B_2}\inverse$ restricts to an automorphism of $A$, and is real vector space automorphism, we have $\sigma_{B_2}\inverse(A')\subseteq A'$. It follows that the restriction of $\sigma_{B_2}\inverse$ to $A'$ is the compositional inverse of $\sigma_{A'}$, which is therefore an automorphism of $A'$.

We may therefore conclude by considering $(A',\sigma_{A'})$, together with its inclusion in  $B'\coloneqq (D_2, \sigma_{B_2})$ and $C'\coloneqq (D_2, \sigma_{C_2})$, and the embeddings $A\to A'$, $B\to B'$, $C\to C'$ given by the construction above.
\end{proof}

\section{Absolute monotonicity}\label{sec:OVSA}
We are interested in the behaviour of \emph{$\sigma$-polynomials}, that is,  linear combinations of iterates of $\sigma$, on existentially closed structures. For simplicity, we only deal with the case $R=\mathbb R$, even if some statements and proofs would work in broader generality.
\begin{defin}\label{defin:tilde} \*
  \begin{enumerate}
\item A \emph{$\sigma$-polynomial} is an element of the polynomial ring $\mathbb R[\sigma]$. 
  \item We identify $\sum_i \lambda_i \sigma^i$ with the induced definable function $x\mapsto\sum_i \lambda_i \sigma^i(x)$. 
  \item If we want to think of $f\in \mathbb R[\sigma]$ as an actual polynomial, instead of the definable function it induces, then we write $\tilde f\in \mathbb R[y]$, and call $\tilde f$ the polynomial \emph{associated} to $f$.
  \item If we want to stress the distinction between $f\in \mathbb R[\sigma]$ and the definable function it induces on a specific $A\models R\mathsf{-OVSA}$, we write $f(x)\from A\to A$ for the latter.
  \item If $A\models \mathbb R\mathsf{-OVSA}$, by a \emph{$\sigma$-polynomial over $A$} we mean a definable function of the form $f(x)=g(x)+d$, with $g\in \mathbb R[\sigma]$ and $d\in A$. We write $f^*$ for $g$, and $\tilde f$ for $\tilde g$.
  \end{enumerate}
\end{defin}

\begin{rem}
 One may also consider a more general notion of $\sigma$-polynomial, by allowing $\sigma\inverse$ to be mentioned, namely, one can look at $\mathbb R[\sigma, \sigma\inverse]$. Since every such object can be written as $\sigma^{-n}f$, for suitable $n\in \omega$ and $f\in \mathbb R[\sigma]$, most statements in what follows generalise immediately from $\mathbb R[\sigma]$ to $\mathbb R[\sigma, \sigma\inverse]$.
\end{rem}
\subsection{Positive real roots}
\Cref{rem:asymptsigma}, together with a routine calculation, gives us the following property, which will be used repeatedly.
\begin{rem}\label{rem:popcwti}
Suppose that $f$ is a $\sigma$-polynomial, $a\in A\models \mathbb R\mathsf{-OVSA}$ is non-zero, and $\rho\in \mathbb R$ is such that $\sigma(a)\sim \rho  a$. Then $\rho>0$ and, if $\tilde f(\rho)\ne 0$, then $f(a)\sim \tilde f(\rho)  a$.
\end{rem}
  \begin{lemma}\label{lemma:ubddimpec}
  If $f\in  \mathbb R[\sigma]\setminus \set 0$  and $M\models \mathbb R\mathsf{-OVSA}$ is existentially closed, then $f(x)\from M\to M$ has unbounded image.
\end{lemma}
\begin{proof}
It suffices to show that, if $f(x)$ has the form $\sigma^n(x)+f_0(x)$, with $f_0$ of degree less than $n$, then $\im f$ has no upper bound. Suppose that $d\in M$ is such that $\im f\le d$. We reach a contradiction by building an extension $B\supseteq M$ containing some $b$ such that $f(b)>M$. This is readily done by  taking the o-minimal \emph{prime model} $B= M(e_i: i\in\mathbb Z)$, where $e_i$ is a \emph{Morley sequence} in the $L_{\mathbb R}$-type over $M$ at $+\infty$. Concretely, $B$ is the $\mathbb R$-vector space generated by $M$ and the $e_i$, with the unique compatible order where the $e_i$ are all positive and $M\ll e_i\ll e_{i+1}$.
Then, extend $\sigma$ by letting it act on $(e_i)_{i\in \mathbb Z}$ by shift.
\end{proof}

Below, we introduce the notion of an \emph{absolutely monotone} $\sigma$-polynomial. The name is inspired by absolutely irreducible algebraic varieties.
\begin{defin}
  Let $f\in \mathbb R[\sigma]$ be a $\sigma$-polynomial.
  \begin{enumerate}
  \item We say that $f$ is \emph{absolutely monotone} iff $\tilde f\ne 0$ and, for every $A\models \mathbb R\mathsf{-OVSA}$, the map $f\from A\to A$ is monotone.
  \item We similarly define \emph{absolutely increasing} and \emph{absolutely decreasing}.
  \item A $\sigma$-polynomial $f$ over $A$ is \emph{absolutely monotone}/\emph{increasing}/\emph{decreasing} iff $f^*$ is.
  \end{enumerate}
\end{defin}

\begin{rem}\*\label{rem:amsscoi}
  \begin{enumerate}
  \item\label{point:absmononesol} If $f$ is absolutely monotone then, for every
    $A\models \mathbb R\mathsf{-OVSA}$, and every $d\in A$, the equation
    $f(x)=d$ has at most one solution.
  \item Absolute monotonicity may be checked on an interval. More precisely, if $[a,b]\subseteq A\models \mathbb R\mathsf{-OVSA}$ (with $a<b$) and $f$ is a $\sigma$-polynomial over $A$ which, in every $B\models \mathbb R\mathsf{-OVSA}$ extending $A$, has a monotone restriction to $[a,b]$, then $f$ is absolutely monotone. This follows from the fact that, if $f$ is not monotone on $C\models \mathbb R\mathsf{-OVSA}$, then in $B\coloneqq A\times_\mathrm{lex} C$ it will also fail to be monotone on every $[a,b]$ as above.
  \end{enumerate}
\end{rem}

\begin{pr}\label{pr:famighnprp}
For every $f\in \mathbb R[\sigma]\setminus \set 0$  the following are equivalent.
\begin{enumerate}
\item\label{mon1} The $\sigma$-polynomial $f$ is absolutely monotone.
\item\label{mon2} For every existentially closed $M$, the function $f\from M\to M$ is monotone.
\item\label{mon3} The polynomial $\tilde f(y)$ has no positive real root.
\item\label{mon4} The $\sigma$-polynomial $f$ is absolutely increasing or absolutely decreasing.
\end{enumerate}
Moreover, if the above hold, then we also have the following.
\begin{enumerate}[label=(\alph*)]
\item \label{point:absmonmoreovera}Let $f=\sum_{i\le \deg f} \lambda_i \sigma^i$, and let $\bar \imath$ be the minimum such that $\lambda_{\bar\imath}\ne 0$. Then, $\lambda_{\bar\imath}$ and the leading coefficient $\lambda_{\deg f}$ have the same sign. They are both positive if and only if $f$ is absolutely increasing, and both negative if and only if $f$ is absolutely decreasing.
\item\label{point:absmonmoreoverb} If $a\in A\setminus\set0$ is such that $\sigma(a)\asymp a$, then $f(a)\asymp a$. More precisely, if $\sigma(a)\sim \rho   a$, then $f(a)\sim \tilde f(\rho )  a$.
\end{enumerate}
\end{pr}
\begin{proof}
Both $\ref{mon4}\allora \ref{mon1}$ and  $\ref{mon1}\allora \ref{mon2}$ are trivial.

  For $\ref{mon2}\allora \ref{mon3}$, suppose that $\rho>0$ is a real root of $\tilde f(y)$. By \Cref{lemma:ubddimpec}, to contradict monotonicity it suffices to show that $f$, which is non-zero, has arbitrarily large zeroes (hence, by considering their additive inverses, also arbitrarily small ones). To this end, we take the lexicographical product $\mathbb R\times_\mathrm{lex} M$.  Since $\rho>0$, we may extend $\sigma$ to $ \mathbb R\times_\mathrm{lex}M$ by mapping $(x,y)\mapsto (\rho  x, \sigma(y))$. It follows that $(1,0)$ is a zero of $f$ larger than $M$, and since $M$ is existentially closed this implies that $f$ has arbitrarily large zeroes in $M$.

  For $\ref{mon3}\allora \ref{mon4}$, suppose that $f=\sum_{i\le \deg f}\lambda_i\sigma^i$. By composing with $\sigma^{-k}$ for  suitable $k\in \omega$, we may assume that $\lambda_0\ne 0$, i.e., in the notation of \ref{point:absmonmoreovera}, that $\bar\imath=0$. Clearly, if $\tilde f(y)=\sum_{i\le \deg f} \lambda_i y^i$ is the associated polynomial, then $\tilde f(0)=\lambda_0$.   Since $\tilde f$ has no positive real root, it must have constant sign on the positive reals. In particular, since $\lim_{y\to \infty} \tilde f(y)=\sign(\lambda_{\deg f})\cdot (+\infty)$, it follows that $\lambda_0$ and $\lambda_{\deg f}$ must have the same sign, incidentally proving the first part of~\ref{point:absmonmoreovera}. We prove that, if both are positive, then $f$ is absolutely increasing; clearly, if both are negative, we obtain that $f$ is absolutely decreasing by applying the positive case to $-f$. This proves $\ref{mon4}$ which, together with the fact that $\lambda_0$, $\lambda_{\deg f}$ have the same sign, proves the rest of~\ref{point:absmonmoreovera}.

Fix $A\models \mathbb R\mathsf{-OVSA}$. Since $f(x)$ is linear, it is enough to show that $A\models \forall x\; (x>0\implica f(x)> 0)$. We fix a positive $a\in A$ and distinguish three cases, depending on comparability of $a$ and $\sigma(a)$ according to the Archimedean valuation. If $\sigma(a)\gg a$, then $\sign(f(a))=\sign(\lambda_{\deg f})$, and if $\sigma(a)\ll a$, then $\sign(f(a))=\sign(\lambda_{0})$. Since we are assuming both $\lambda_{\deg f}$ and $\lambda_{0}$ are positive, in both of these cases we are done. In the only remaining case there is a positive real number $\rho$ such that $\sigma(a)\sim \rho  a$. By \Cref{rem:popcwti} we have  $f(a)\sim\tilde f(\rho)  a$. Since $\tilde f(0)>0$ and, as observed above, $\tilde f$ has constant sign on the positive reals, we obtain $\tilde f(\rho)>0$ and we are done; this also proves~\ref{point:absmonmoreoverb}.
\end{proof}

\begin{co}
  If $f\in \mathbb R[\sigma]$ is absolutely monotone, and $A\models \mathbb R\mathsf{-OVSA}$, then $f(x)\from A\to A$ has unbounded image.
\end{co}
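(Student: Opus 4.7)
The plan is to reduce to finding, for every positive $d\in A$, some $a\in A$ of the form $kd$ for a positive real $k$ with $f(a)>d$; the trivial case $A=\set 0$ is treated separately as vacuous. First, by \Cref{pr:famighnprp}, I may assume $f$ is absolutely increasing (the decreasing case is symmetric, using $f\leftrightarrow -f$). Second, writing $f=\sigma^{i_0}\circ g$, where $i_0$ is the least index with $\alpha_{i_0}\ne 0$ and $g\in\mathbb R[\sigma]$ has nonzero constant term, I note that $\sigma^{i_0}$ is an order-preserving bijection of $A$, so $f(A)=\sigma^{i_0}(g(A))$ is unbounded in $A$ iff $g(A)$ is, and $g$ is absolutely increasing iff $f$ is. Hence I may further assume $\alpha_0\ne 0$; by \ref{point:absmonmoreovera}, $\alpha_0>0$.

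Fix a positive $d\in A$. Following the case analysis from the proof of \Cref{pr:famighnprp} applied to $d$: if $\sigma(d)\gg d$, then $\sigma^i(d)\gg d$ for all $i\ge 1$, so $f(d)\sim \alpha_{\deg f}\sigma^{\deg f}(d)$, which is positive and in Archimedean class strictly larger than that of $d$; in particular $f(d)>d$ already. If $\sigma(d)\ll d$, then $\sigma^i(d)\ll d$ for $i\ge 1$, hence $f(d)=\alpha_0 d+\epsilon$ for some $\epsilon\in A$ with $\epsilon\ll d$. If $\sigma(d)\sim cd$ for some positive real $c$, then part \ref{point:absmonmoreoverb} gives $f(d)=\tilde f(c)d+\epsilon$ with $\tilde f(c)>0$ and $\epsilon\ll d$. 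In the two latter cases, writing $f(d)=rd+\epsilon$ with $r$ a positive real, I take $k\coloneqq 2/r$, so that $kf(d)=2d+k\epsilon$; since $k\epsilon\ll d$, this yields $kf(d)>d$.

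In all three cases I obtain a positive real $k$ with $kf(d)>d$. By $\mathbb R$-linearity of $f$ and the $\mathbb R$-vector space structure of $A$, the element $a\coloneqq kd\in A$ satisfies $f(a)=kf(d)>d$, so $f(A)$ is unbounded above in $A$. Unboundedness below follows from $f(-a)=-f(a)$ combined with $-A=A$, which gives $f(A)=-f(A)$. The main subtlety I foresee is the preliminary reduction to $\alpha_0\ne 0$: without it, in the case $\sigma(d)\ll d$ one could have $f(d)$ in a strictly smaller Archimedean class than $d$, and then no positive real scaling of $d$ would produce an image exceeding $d$; factoring out $\sigma^{i_0}$ sidesteps this issue.
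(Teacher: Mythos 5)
Your proof is correct and follows essentially the same route as the paper's: reduce to the absolutely increasing case with nonzero identity coefficient, apply the Archimedean case analysis from \Cref{pr:famighnprp} to conclude $f(d)\gg d$ or $f(d)\asymp d$, and then scale using the commutation of $f$ with scalar multiplication. The only difference is that you argue directly for each $d$ while the paper argues by contradiction from a putative bound, which is an immaterial rephrasing.
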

\begin{proof}
  It is enough to prove unboundedness from above when $f$ is absolutely increasing, and moreover with non-zero identity term, that is, $f=\sum_{i\le \deg f} \lambda_i\sigma^i$ with $\lambda_0> 0$.  Suppose that for such an $f$ we have  $\im f < a\in A$. By \Cref{pr:famighnprp}, if $\sigma(a)\gg a$ then $f(a)\asymp \sigma^{\deg f}(a)\gg a$, and if  $\sigma(a)\ll a$ or   $\sigma(a)\asymp a$ then  $f(a)\asymp a$. Hence, using \ref{point:absmonmoreovera} and \ref{point:absmonmoreoverb} from \Cref{pr:famighnprp}, for some $\beta\in \mathbb Z$ we have $\beta  f(a)> a$, but clearly $\beta  f(a)=f(\beta  a)$, a contradiction.
\end{proof}
\subsection{Surjectivity}
We adopt the following conventions.
\begin{defin} Let $A\subseteq B$ be linear orders.
  \begin{enumerate}
  \item A \emph{cut} $p$ in  $A$ is a pair $(L_p, R_p)$
    of (possibly empty) subsets of $A$ such that $L_p<R_p$ and
    $L_p\cup R_p=A$.
  \item If $D$ is a subset of $A$, by \emph{the cut right} (resp.~\emph{left}) \emph{of $D$} we mean the unique cut in $A$ such that $D$ is cofinal in $L_p$ (resp.~coinitial in $R_p$).
  \item If $h\from A\to A$ is an increasing (resp.~decreasing) bijection, the \emph{pushforward} $h_*p$ is the unique cut in $A$ with $L_{h_*p}=\set{h(a): a\in L_p}$ (resp.~$L_{h_*p}=\set{h(a): a\in R_p}$).
  \item The \emph{cut of} $b\in B\setminus A$ in $A$ is the cut $p$ with $L_p\coloneqq\set{a\in A : a<b}$ and $R_p\coloneqq\set{a\in A : a>b}$. We also say that $b$ \emph{realises} $p$, or that $b$ \emph{is in} $p$.
  \item If $p$ is a cut in $A$, the set of its realisations in $B$ is denoted by $p(B)$.
 \end{enumerate}
\end{defin}
We think of cuts in an $\mathbb R\mathsf{-OVSA}$ as \emph{$1$-types} in the reduct to $L_{\mathbb R}$; in practice, this means that, if $I=[a,b]$ is an interval in $A$, we write $p(x)\proves x\in I$ to mean that $a\in L_p$ and $b\in R_p$, and that 
if $B\supseteq A$ and $b\in B$ we write $b\models p$ to mean that $b\in p(B)$.
\begin{defin}
  Let $f\in \mathbb R[\sigma]$ be absolutely monotone  and $d\in A$. Suppose that $f(x)=d$ has no solution in $A$.  The \emph{cut of a zero of $f(x)=d$} is  the cut $p$ in $A$ defined as follows.
  \begin{enumerate}
  \item If $f(x)$ is absolutely increasing, we set $L_p\coloneqq\set{x\in A: f(x)<d}$ and $R_p\coloneqq\set{x\in A: f(x)>d}$.
  \item If $f(x)$ is absolutely decreasing, we set $L_p\coloneqq\set{x\in A: f(x)>d}$ and $R_p\coloneqq\set{x\in A: f(x)<d}$.
  \end{enumerate}
\end{defin}

\begin{eg}\label{eg:norootofabsmon}
Let $B=\mathbb{R}((t))$ be the $\mathbb R\mathsf{-OVS}$ of real Laurent series 
and let $A$ be its subspace $\mathbb R[t,t\inverse]$, consisting of the elements with finite support. Equip $B$ with the automorphism $\sigma$ given by $\sigma(\sum_{i} a_i t^i)= \sum_{i} a_i t^{i+1}$, and $A$ with its restriction, still denoted by $\sigma$. Let $c\coloneqq t^0$.

 The equation $\sigma(x)+x=c$ is solved by  $b\coloneqq \sum_{i\geq 0} (t^{2i}-t^{2i+1})\in B\setminus A$. This solution is unique because the $\sigma$-polynomial $\sigma(x)+x$ is absolutely increasing, hence $\sigma(x)+x=c$ has no solution in $A$. The cut of a zero of $\sigma(x)+x=c$ in $A$ is the one of $b$, namely $(L_p,R_p)=(\{a\in A: x<b\}, \{x\in A: x>b\})$.

\end{eg}

\begin{pr}\label{pr:naacsg}
  Let $f\in \mathbb R[\sigma]$ be absolutely monotone and $d\in A$ be such that $f(x)=d$ has no solution in $A$. Then, the cut in $A$ of a zero of $f(x)=d$ cannot be the cut right or left of a convex subgroup of $A$.
\end{pr}
\begin{proof}
Let $H$ be a convex subgroup of $A$. By replacing the order  $<$ on $A$ with the opposite order $>$ if necessary, it is enough to consider the cut $p$ right of $H$. Moreover, by replacing $f$ with $-f$ and $d$ with $-d$ if necessary, we may assume $f$ is absolutely increasing. Composing $f$ with $\sigma^k$ for a suitable integer $k$ we may assume $f$ is of the form $\sum_{i=0}^{\deg f} \lambda_i\sigma^i(x)$ with $\lambda_i\in \mathbb{R}$ and $\lambda_0\neq 0$ (note $\lambda_{\deg f}>0$ as $f$ is absolutely increasing).
Observe that $\sigma(H)$ is a convex subgroup of $A$. Let $a=\sigma^{-\deg f}(d)\in A$.  Because $f(x)=d$ has no solution in $A$, we must have $d\ne 0$, hence $a\ne 0$ as well.

\textbf{Case 1.} We have $\sigma(a)\asymp a$ or $a\ll \sigma(a)$.  

Then, by \Cref{pr:famighnprp}, point~\ref{point:absmonmoreoverb}, $f(a)\asymp \sigma^{\deg f}(a)=d$, so there is  $k\in \omega\setminus\set 0$ such that $f(\frac 1k a)=\frac 1k f(a)<d<kf(a)=f(ka)$, i.e.~$\frac 1ka\in L_p$ and $ka\in R_p$, so $\frac 1ka\in H$ and $ka>H$, a contradiction.

\textbf{Case 2.} We have $\sigma(a)\ll a$.

Then $f(a)\asymp a$, so $f(\sigma^{\deg f}(a))=\sigma^{\deg f}(f(a))\asymp \sigma^{\deg f}(a)=d$, so again there is  $k\in \omega\setminus \set 0$ with $f(\frac 1k \sigma^{\deg f}(a))=\frac 1k f(\sigma^{\deg f}(a))<d<kf(\sigma^{\deg f}(a))=f(k\sigma^{\deg f}(a))$, a contradiction as above.
\end{proof}
\begin{fact}\label{fact:lifttoi-completion}
  If $A\models \mathbb R\mathsf{-OVSA}$ is non-trivial, then there is a maximally complete immediate extension $B\models \mathbb R\mathsf{-OVSA}$ of $A$.
\end{fact}
\begin{proof}
It suffices to take as $B$ the Hahn product of the skeleton of $A$, together with the lift of $\sigma$ provided by \Cref{fact:HET}.
\end{proof}

\begin{fact}\label{fact:cutinicomplete}
  If $A\models \mathbb R\mathsf{-OVSA}$ is maximally complete, then every cut in $A$ is a translate of a cut left or right of a convex subgroup.
\end{fact}
\begin{proof}
  Recall that an extension $B\subseteq C$ of ordered $\mathbb Q$-vector spaces is an \emph{i-extension} iff the map $H\mapsto H\cap B$ is a bijection between the convex subgroups of $C$ and $B$, and that $B$ is \emph{i-complete} iff it has no proper i-extensions. If $A$ is an ordered $\mathbb R$-vector space, it is easy to see that it is i-complete if and only if it is maximally complete with respect to the Archimedean valuation, if and only if, by \Cref{fact:HET}, it is isomorphic to the Hahn group $\mathbb R((I))$, where $I$ is the spine of $A$. It now suffices to regard $A$ as an i-complete ordered $\mathbb Q$-vector space and invoke~\cite[Corollary~13.11]{hhm} (see also~\cite[Proposition~4.8]{dominomin}).
\end{proof}

\begin{co}\label{co:ivp_mon}
If $A\models \mathbb R\mathsf{-OVSA}$ is maximally complete, or existentially closed, then for all absolutely monotone $f\in \mathbb R[\sigma]$ the induced $f(x)\from A\to A$ is surjective.
\end{co}
\begin{proof}
  It is enough to prove the conclusion in the maximally complete case, since the existentially closed case will then follow from \Cref{fact:lifttoi-completion}.
So, let $A$ be maximally complete, and assume towards a contradiction that $f(x)-d$ has no solution in $A$.   
Let $p(x)$ be the cut of a zero.  By \Cref{fact:cutinicomplete} we find $\lambda\in \set{1, -1}$ and $a\in A$ such that, if $h(x)=\lambda  x-a$, then the pushforward $h_*p$ is the cut right of a convex subgroup.  If $\lambda=1$, then
\begin{equation}
  b\models f(x)= d\iff  b-a\models f(y+a) = d\iff b-a\models f(y)= d-f(a)\label{eq:tcaos}  
\end{equation}
Hence, if $f(x)=d$ does not have a solution in $A$, neither does $f(y)=d-f(a)$.
Moreover,  \eqref{eq:tcaos} also holds after replacing every $=$ by $\ge$, or every $=$ by $\le$, which implies that $h_*p$ is the cut of a zero of $f(y)=d-f(a)$, contradicting \Cref{pr:naacsg}. The case where $\lambda=-1$ is similar.
\end{proof}

\section{The Intermediate Value Property}\label{sec:ivp}
\subsection{The IVP for $\sigma$-polynomials\ldots}
In this subsection we prove that, if $M\models \mathbb R\mathsf{-OVSA}$ is existentially closed, then every $\sigma$-polynomial, when viewed as a function $M\to M$, has the Intermediate Value Property, that is, it maps convex sets to convex sets.

Clearly, since we are working on an oag, saying that every $\sigma$-polynomial has the IVP is the same as saying that whenever a $\sigma$-polynomial over $M$ changes sign on an interval, then it has a zero in the same interval. Without existential closedness, this can indeed fail.
\begin{eg}\label{eg:noIVP}
Let $J$ be the concatenation of two copies $I_0, I_1$ of $\mathbb Z$, and let $A$ be the Hahn group $\mathbb R((J))$. Consider the automorphism $\sigma$ induced by backward shift on $I_0$ and forward shift on $I_1$. If $a,b\in A$ are positive and such that $\supp(a)\subseteq I_1$ and $\supp(b)\cap I_0\ne \emptyset$, then $\sigma(a)< a<b< \sigma(b)$. It follows that the $\sigma$-polynomial $h(x)=\sigma(x)-x$ changes sign on $[a,b]$, hence the IVP would grant a zero of $h$, that is, a fixed point of $\sigma$, in $[a,b]$. Nevertheless, since elements of $\mathbb R((J))$ have well-ordered support, it follows easily that $h$ has no non-zero fixed points in $A$ because, by construction, if $c\ne 0$ then $\min(\supp(c))\ne \min(\supp(\sigma(c)))$.
\end{eg}

\begin{rem}\label{rem:ivpcomp}
The class of functions $A\to A$ with the IVP is closed under composition.
\end{rem}

\begin{defin}\label{defin:csaac}
Let $p$ be a cut in $A\models \mathbb R\mathsf{-OVSA}$ and $h(x)$ a $\sigma$-polynomial over $A$. We say that $h$ \emph{changes sign at $p$} if
\begin{itemize}
\item $\set{a\in A: h(a)<0}\cap L_p$ is cofinal in $L_p$ and $\set{a\in A: h(a)>0}\cap R_p$ is coinitial in $R_p$; or
  \item $\set{a\in A: h(a)>0}\cap L_p$ is cofinal in $L_p$ and $\set{a\in A: h(a)<0}\cap R_p$ is coinitial in $R_p$.
\end{itemize}
\end{defin}
\begin{rem}\label{rem:newton}
Assume that $a<b$ and that $h(a)$ and $h(b)$ have different sign. A routine transfinite argument shows that either there is a zero of $h$ in $[a,b]$, or there is a cut $p$ with $a\in L_p$ and $b\in R_p$ such that $h$ changes sign at $p$.
\end{rem}
\begin{thm}\label{thm:ivp}
If $h\in \mathbb R[\sigma]$, and $M$ is existentially closed, then $h(x)\from M\to M$ has the Intermediate Value Property.
\end{thm}
\begin{proof}
We begin by proving that, if $A\models \mathbb R\mathsf{-OVSA}$ and $h$ is a $\sigma$-polynomial of degree $1$ over $A$ then, whenever $p$ is a cut in $A$ where $h$ changes sign, in some extension of $A$ there is a zero of $h$ lying in $p$.
  
 Let $B_0$ and $B_1$ be $\mathbb R((\mathbb Z))$ with the automorphism induced by forward and backward shift respectively. Up to replacing $A$ by $B_i\times_\mathrm{lex} A$ for some $i\in \set{0,1}$ we may assume that $p$ is not at  $\pm\infty$, that is,  $L_p\ne \emptyset\ne R_p$. Since the cuts at $\pm\infty$ are fixed by (the pushforward along) every automorphism,  $\sigma_*p$ is also not at $\pm \infty$. Moreover, because $h$ is continuous\footnote{In the order topology.}, it follows that $L_p$ has no maximum and $R_p$ has no minimum.
  
Up to rescaling, we may assume $h(x)=\sigma(x)-g(x)$ with $g(x)$ affine, that is, of the form $\lambda  x+d$, with $d\in A$. 
\begin{claim}\label{claim:ivpproof}
  It is enough to show that, for every closed interval $I_0=[\ell_0, r_0]$ such that $p(x)\proves x\in I_0$, and every closed interval $I_1=[\ell_1, r_1]$ such that $\sigma_* p(x)\proves x\in I_1$, there is $a_0\in I_0$ such that  $g(a_0)\in I_1$. 
\end{claim}
\begin{claimproof}
  By a standard model-theoretic fact,  the restriction of  $A$ to the language $L_{\mathbb R}$ embeds in a \emph{monster model} $\monster\models \mathbb R\mathsf{-OVS}$.  We will come back to this concept in \Cref{conv:monster-model} but, for the time being, all the non-model-theorist needs to know is that every cut in $A$ is realised by some point of $\monster$, and that every partial \emph{elementary map} with domain of size not larger than that of $A$ extends to an automorphism of $\monster$. We will not define here what an elementary map is in general; for models of $\mathbb R\mathsf{-OVS}$, due to a phenomenon known as \emph{quantifier elimination}, a map between subsets $A_0, B_0$ of $\monster$ is elementary if and only if it extends to an isomorphism between the ordered vector subspaces they generate.
  
 By compactness, the assumptions of the claim guarantee that $p(x)\cup \sigma_*p(g(x))$ is consistent.\footnote{Of course, since $p$ is a complete $L_{\mathbb R}(A)$-type, this is equivalent to $p(x)\proves \sigma_*p(g(x))$.} Pick an arbitrary $b_0\models p(x)\cup \sigma_*p(g(x))$ in $\monster$ and set $b_1\coloneqq g(b_0)$. It follows that sending each $a\in A$ to $\sigma(a)$ and $b_0$ to $b_1$ is an elementary map, that we can extend to an automorphism of $\monster$.
\end{claimproof}
Therefore, it suffices to prove that the properties in \Cref{claim:ivpproof} are satisfied. Up to restricting $I_0$ and $I_1$, we may assume  that $\ell_{1}=\sigma(\ell_0)$ and $r_{1}=\sigma(r_0)$. In the case where  $\set{a\in A: h(a)<0}\cap L_p$ is cofinal in $L_p$ and $\set{a\in A: h(a)>0}\cap R_p$ is coinitial in $R_p$, we may furthermore assume  $h(\ell_0)<0<h(r_0)$.
 So
\begin{equation}\label{eq:lnrn}
  \begin{split}
\ell_1= \sigma(\ell_0)< g(\ell_0)\\ r_1= \sigma(r_0)> g(r_0)
\end{split}
\end{equation}

If at least one of $g(\ell_0)$ and $g(r_0)$ is in $I_1=[\ell_1, r_1]$, we conclude by setting $a_0\coloneqq\ell_0$ or $a_0\coloneqq r_0$. Otherwise, by~\eqref{eq:lnrn} we must have $g(r_0)< \ell_1< r_1<g(\ell_0)$. But then the image of the closed interval $I_0$ under the affine function $g$, which is a closed interval, contains $\ell_1, r_1$, and the conclusion follows by choosing some $a_0\in I_0$  such that $g(a_0)\in [\ell_1, r_1]$.

In the other case we may assume that $h(\ell_0)>0>h(r_0)$, which means that $\ell_1= \sigma(\ell_0)> g(\ell_0)$ and  $r_1= \sigma(r_0)< g(r_0)$. Again, this implies that $[\ell_1, r_1]\subseteq g(I_0)$, and we conclude as above.

This shows that every $\sigma$-polynomial of degree $1$ over an existentially closed model has the Intermediate Value Property. In the general case, let $h\in \mathbb R[\sigma]$ be of arbitrary degree, and let $\tilde h(y)$ be the polynomial associated to $h$. If $\tilde h\in \mathbb R$ then $h$ is a multiple of the identity and the conclusion is trivially true; otherwise, we may factorise $\tilde h$ as  $\tilde h(y)=\prod_{i\leq n} \tilde f_i(y)$, where the $\tilde f_i$ are irreducible real polynomials; in particular,  $\deg(\tilde f_i)\in \set{1,2}$. Let $f_i$ be the real $\sigma$-polynomial with associated polynomial $\tilde f_i$. Then $h$ is equal to the composition $f_0f_1\ldots f_n$, and by \Cref{rem:ivpcomp} it is enough to prove that each $f_i$ has the IVP. But we have already proven this for those $i$ such that $\deg(\tilde f_i)=1$, and if $\tilde f_i$ is irreducible of degree $2$, then $f_i$ is absolutely monotone by \Cref{pr:famighnprp}, hence  surjective by  \Cref{co:ivp_mon}, and every monotone, surjective function has the IVP.
\end{proof}

\begin{co}\label{co:ivp_pol}
  If $M$ is existentially closed, every  $h\in \mathbb R[\sigma]\setminus\set 0$ is surjective.
\end{co}
\begin{proof}
By  \Cref{lemma:ubddimpec} and \Cref{thm:ivp}.
\end{proof}
\subsection{\ldots and minima thereof}
In this subsection we  extend \Cref{thm:ivp} by proving (\Cref{thm:minima}) that, if $M\models \mathbb R\mathsf{-OVSA}$ is existentially closed, and  $h$ is the minimum of finitely many $\sigma$-polynomials over $M$, then $h$ has the IVP on $M$. 
\begin{notation}Let $h=\min(f_0,\ldots,f_{n-1})$ for some $\sigma$-polynomials $f_0,\ldots,f_{n-1}$ over $A\models \mathbb R\mathsf{-OVSA}$.
  \begin{enumerate}
  \item We extend the terminology introduced in \Cref{defin:csaac} to
    such $h$.
  \item Extending \Cref{defin:tilde}, we write $\tilde{h}\coloneqq\min(\tilde f_0 ,\ldots,\tilde f_{n-1})$.
  \item We denote the $\sigma$-orbit of $a\in A$ by $\orb(a)\coloneqq\set{\sigma^k(a): k\in \mathbb Z}$.
  \item   We say that a convex subgroup $H\le A\models \mathbb R\mathsf{-OVSA}$ is \emph{invariant} iff it is stabilised setwise by $\sigma$, that is, iff $\sigma(H)=H$.
\end{enumerate}
\end{notation}
\begin{lemma}\label{lemma:add_c}
Let $B\models \mathbb{R}\mathsf{-OVSA}$ and let $p$ be the cut right of an invariant convex subgroup $H\leq B$. For every $\rho>0$ there is an extension $B\subseteq C\models \mathbb R\mathsf{-OVSA}$ generated as an $\mathbb{R}$-vector space by $B$ and some  $c\in p(C)$ such that  $\sigma(c)=\rho   c$.
\end{lemma}
\begin{proof}
Let $C$ be generated by $B$ and $c$ as an $\mathbb{R}$-vector space with $c\notin B$, ordered as follows. For every $\lambda\in \mathbb{R}$ and $b\in B$, set $\lambda c+b>0$ if and only if $(\lambda>0\wedge -\frac 1 \lambda b\in L_p)\vee (\lambda<0\wedge -\frac 1\lambda b\in R_p)\vee (\lambda=0\wedge  b>0) $. It is routine to check, either by using monster models or directly by hand, that this gives an ordered $\mathbb{R}$-vector space structure on $C$, and that $\lambda c+b\mapsto \rho \lambda c+\sigma(b)$ is its automorphism.
\end{proof}
\begin{defin}
Let $p$ be a cut in $A\models \mathbb R\mathsf{-OVSA}$ and $f$ a $\sigma$-polynomial over $A$. We say that $f$ \emph{potentially changes sign} at $p$ iff for every $a\in L_p$ and $b\in R_p$ there is an extension $A\subseteq B\models \mathbb R\mathsf{-OVSA}$ such that $f$ assumes different signs in $B$ between $a$ and $b$.
\end{defin}
\begin{lemma}\label{lemma:pcsec}
Let $f$ be a $\sigma$-polynomial over $A$, and $p$ a cut in the latter.  The following are equivalent.
  \begin{enumerate}
  \item\label{point:potcs} The $\sigma$-polynomial $f$ potentially changes sign at $p$.
  \item\label{point:extds} In some extension $B$ of $A$, there are points of $p(B)$ where $f$ assumes different signs.
  \item\label{point:eczipm} In some existentially closed extension $M$ of $A$, there is a zero of $f$ in $p(M)$.
  \end{enumerate}
\end{lemma}
\begin{proof}
Compactness gives immediately $\ref{point:potcs}\allora\ref{point:extds}$, while $\ref{point:extds}\allora\ref{point:potcs}$ is trivial and $\ref{point:extds}\allora\ref{point:eczipm}$ follows from  \Cref{thm:ivp}. To prove $\ref{point:eczipm}\allora\ref{point:extds}$, if $m\in p(M)$ is such that $f(m)=0$, by passing to a suitable lexicographical product we find $\epsilon$ such that $f(m+\epsilon)>0$ and $f(m-\epsilon)<0$.
\end{proof}

\begin{lemma}\label{lemma:in_H}
Let $H$ be an invariant convex subgroup of $A\models \mathbb R\mathsf{-OVSA}$. For all $d\in A$ and $f\in \mathbb R[\sigma]$, if $f(x)+d$ potentially changes sign at the cut right of $H$, then $d\in H$.
\end{lemma}
\begin{proof}
Suppose not,  let $p$ be the cut right of $H$,  let $M$ be given by \Cref{lemma:pcsec}, and set $k\coloneqq\deg(f)$. 
If $\sigma(\abs{d})\gg \abs{d}$ then, for every  $c\in M$ such that  $0<c< \sigma^{-k-1}(\abs{d})$, we have   $\abs{d}>\abs{f(c)}$.   Therefore, for such $c$ we have  $\sign(f(c)+d)=\sign(d)$. Since  $\sigma^{-k-1}(\abs{d})>H$ we conclude that  $f(x)+d$ does not potentially change sign at $p$. Similarly, if $\sigma(\abs{d})\ll d$ and $0<c<\sigma( \abs{d})$, then $\abs{d}>\abs{f(c)}$, so again $f(x)+d$ does not potentially change sign at $p$. 

In the remaining case, $\sigma(d)\sim \rho   d$ for some positive $\rho \in \mathbb{R}$.
If $c\in M$ and $0<c\ll \abs{d}$, then $\sigma^n(c)\ll \abs{d}$ for every $n\in \omega$, so $\abs{d}>\abs{f(c)}$. Also, if  $c>0$ and  $d\asymp c<\frac 1 {1+\abs{\tilde f(\rho )}}\abs{d}$,
 we have that $c\sim \lambda\abs{d}$ for 
some $0<\lambda\leq  \frac 1 {1+\abs{\tilde f(\rho )}}$. Therefore, if $\tilde f(\rho )\neq 0$ then $ \abs{f(c)}\sim \lambda \abs{\tilde f(\rho )}\abs{d}\leq \frac {\abs{\tilde f(\rho )}}{\abs{\tilde f(\rho )}+1}\abs{d}$, 
and if $\tilde f(\rho )= 0$ then $f(c)\ll \abs{d}$. In either case  $\abs{d}>\abs{f(c)}$,  thus $f(c)+d$ has the same sign as $d$ for $0<c<\frac 1 {1+\abs{\tilde f(\rho )}}\abs{d}$, hence $f(x)+d$ does not potentially change sign at $p$.
\end{proof}

\begin{lemma}\label{lemma:pos_root}
Let $A\models \mathbb R\mathsf{-OVSA}$ and $h=\min(f_0,\ldots,f_{n-1})$ for some non-absolutely monotone $\sigma$-polynomials $f_0,\ldots,f_{n-1}\in \mathbb R[\sigma]$. If $h$ assumes both positive and negative values on $\{a\in A: a>0\}$, then there is some positive real number $\rho$ with $\tilde{h}(\rho)=0$.
\end{lemma}
\begin{proof}
Note no $f_i$ is the null $\sigma$-polynomial as $h$ assumes positive values.
For all $x_0,\ldots,x_{n-1}\in A$ and every $k\in \mathbb{Z}$  the sign of  $\min(x_0,\ldots, x_{n-1})$ is the same as the sign of $\min(x_0,\ldots,x_{i-1},\sigma^k(x_i),x_{i+1},\ldots,x_{n-1})$, so by composing each $f_i$ with $\sigma^k$ for a suitable $k$ we may assume that for every $i<n$ we have $\tilde f_i(0)\neq 0$. 

Suppose there is no positive $\rho$ with $\tilde h(\rho) =0$. Note that we cannot have $\tilde h(\rho) >0$ for every $\rho>0$, as then $\tilde f_i(\rho)>0$ for every $i<n$ and $\rho>0$, hence $f_i$ would be absolutely monotone by \Cref{pr:famighnprp}. Thus, as $\tilde h$ is a continuous real function, we get that $\tilde h(\rho)<0$ for every $\rho>0$. Hence, for some $i<n$, we must have $\lim_{x\to \infty} \tilde f_i(x)=-\infty$, as otherwise we would have $\lim_{x\to \infty} \tilde f_i(x)=\infty$ for every $i$ (note $\tilde f_i$ cannot be constant as $f_i$ is not absolutely monotone), so $\lim_{x\to \infty}\tilde h(x)=\infty$, a contradiction. Similarly, for some $i'<n$, we must have $\tilde f_{i'}(0)=\lim_{x\to 0}\tilde f_{i'}(x)<0$ (recall that we arranged $\tilde f_i(0)\neq 0$). Note $\lim_{x\to \infty}\tilde f_i(x)=-\infty$ means that the leading coefficient of $f_i$ is negative,  so if $\sigma(a)\gg a$ then $h(a)\leq f_i(a)<0$. On the other hand, $\tilde f_{i'}(0)<0$ means that the coefficient of $x$ in $f_{i'}$ is negative, so for  $\sigma(a)\ll a$ we have $h(a)\leq f_{i'}(a)<0$. Also, if  $a>0$ and $\sigma(a)\sim \rho  a$ for some real number $\rho> 0$ then, as $\tilde h(\rho)<0$, we have $h(a)\sim \tilde h(\rho)  a<0$. Thus $h(a)<0$ for every positive $a$, contradicting the assumption.
\end{proof}
\begin{thm}\label{thm:minima}
Let $h=\min(f_0,\ldots,f_{n-1})$ for some $\sigma$-polynomials $f_0,\ldots,f_{n-1}$ over $A\models \mathbb R\mathsf{-OVSA}$. Suppose $a,b\in A$ with $a<b$ are such that $h$ changes sign between $a$ and $b$. Then, in some extension of $A$, there is a zero of $h$ between $a$ and $b$. Equivalently, if $M\models \mathbb R\mathsf{-OVSA}$ is existentially closed, then every $h$ as above has the IVP on $M$.
\end{thm}
\begin{proof}
 By induction on $(n, \max_{i< n}\deg f_i)\in (\omega\setminus\set0)\times_\mathrm{lex}\omega$. If $n=1$, this is \Cref{thm:ivp}. Consider $h=\min(f_0,\ldots,f_{n-1})$ and $a<b$  such that $h$ changes sign between $a$ and $b$. Suppose for a contradiction $h,a,b$ do not satisfy the conclusion.

\begin{claim}\label{cl:not_monotone}
No $f_i$ is absolutely monotone.
\end{claim}
\begin{claimproof}
Suppose for example $f_0$ is absolutely monotone.  Without loss of generality, assume that $A$ is existentially closed. Then, by point~\ref{point:absmononesol} of \Cref{rem:amsscoi} and \Cref{co:ivp_mon}, $f_0$ has exactly one root $m$. If $m\notin (a,b)$, then $f_0> 0$ on $(a,b)$ (as otherwise $h\leq f_0<0$ on $(a,b)$, so $h$ cannot change sign between $a$ and $b$),  so  a root of $\min(f_1,\ldots,f_{n-1})$ on $(a,b)$, which exists by the inductive hypothesis, will also be a root of $h$. So assume $m\in (a,b)$. Then $h$ changes sign between $a$ and $m$, or between $m$ and $b$, and in each case we can conclude as above, as $m\notin (a,m)$ and $m\notin (m,b)$.
\end{claimproof}

If $0\in (a,b)$, then $h$ changes sign either between $a$ and $0$ or between $0$ and $b$, so by replacing $(a,b)$ with $(a,0)$ or $(0,b)$, we may assume $0\notin (a,b)$. 

By \Cref{fact:lifttoi-completion} we may assume that $A$ is maximally complete.
As in \Cref{rem:newton}, we can find a cut $p$ between $a$ and $b$ (i.e.~$a\in L_p$ and $b\in R_p$) at which $h$ changes sign. By \Cref{fact:cutinicomplete}, $p$ is the cut either left or right of $v+H$ for some convex subgroup $H\leq A$ and some $v\in A$. By replacing each $f_i(x)$ with $f_i(x+v)$, we may assume $v=0$. Furthermore, by reversing the order on $A$ and replacing each $f_i$ with $-f_i$ if necessary, we may assume $p$ is the cut right of $H$, as $\min(f_0,\ldots,f_{n-1})$ calculated with respect to the reversed order equals $\max(f_0,\ldots,f_{n-1})=-\min(-f_0,\ldots,-f_{n-1})$ in the original order. 

\begin{claim}\label{claim:sign}
Every $f_i$ potentially changes sign at $p$.
\end{claim}
\begin{claimproof}
Suppose not, as witnessed by, say, $f_0$. If $f_0$ is negative on an interval $(a',b')$ around $p$, that is, with $a'\in L_p$ and $b'\in R_p$, then $h$ does not change sign at $p$. Therefore, there is  an interval $(a',b')$ around $p$ where $f_0$ is non-negative; up to shortening this interval, we may assume that $f_0$ stays non-negative on $(a',b')$ in every extension of $A$.
By the inductive hypothesis, in some existentially closed extension of $A$, there is a zero of $\min(f_1,\ldots,f_{n-1})$ on the interval $(a,b)\cap (a',b')$, and by choice of $(a',b')$ this is also a zero of $h$, contradicting the choice of $h,a,b$.
\end{claimproof}
For each $i<n$, write $f_i=f_i^*+d_i$ for a $\sigma$-polynomial $f^*_i$ and $d_i\in A$, as in \Cref{defin:tilde}. As $h$ changes sign at $p$, by \Cref{claim:sign} and \Cref{lemma:in_H} we have  $d_0,\ldots,d_{n-1}\in H$.
\begin{claim}
The convex subgroup $H$ is invariant, i.e.~$\sigma(H)=H$.
\end{claim}
\begin{claimproof}
By \Cref{claim:sign}, $f_0$ potentially changes sign at $p$, and hence by \Cref{lemma:pcsec} there are  $M\supseteq A$ and $c\in p(M)$ with $f_0(c)=0$. If $\sigma(H)$ is a convex subgroup of $A$ distinct from $H$, we must have either $\sigma(c)\gg c$ or $\sigma(c)\ll c$. Thus $-d_0=f_0^*(c)\asymp \sigma^k(c)$ for some $k\in\mathbb{Z}$, so $c\asymp \sigma^{-k}(-d_0)\in A$, which is not possible as $c$ realises a cut right of a convex subgroup.
\end{claimproof}

\begin{claim}\label{claim:rho}
There is some positive real number $\rho$ with $\tilde{h}(\rho)=0$.
\end{claim}
\begin{claimproof}
Suppose there is no such $\rho$. By \Cref{cl:not_monotone} and \Cref{lemma:pos_root}, we get a contradiction as soon as we check that $h^*\coloneqq \min(f_0^*,\ldots,f_{n-1}^*)$ assumes both positive and negative values on the positive elements of some model of $ \mathbb R\mathsf{-OVSA}$.

We will deal with the case where $h^*(b)\geq 0$, the argument in the case where $h^*(b)\leq 0$ being entirely analogous. 
Let $d'\coloneqq \max(\abs{d_0},\ldots,\abs{d_{n-1}})$.  As  $d_0,\ldots,d_{n-1}\in H$, we have that $b\gg \orb(d')$. If $\sigma(b)\sim \rho  b$ for some $\rho>0$, then, as $\tilde{h}(\rho)\neq 0$ by assumption, we have $\abs{h^*(b)} \sim \abs{\tilde{h}(\rho)  b}\asymp \abs{b}$ so $h^*(b)> 0$.  Similarly, when $\sigma(b)\gg b$ or $\sigma(b)\ll b$, we get that $h(b)\asymp \sigma^k(b)$ for some $k$, hence again $h^*(b)\geq 0$ implies that  $h^*(b)>0$.

Since $h$ changes sign at $p$ and $d_0,\ldots,d_{n-1}\in H$, we can  find, for every $k\in \mathbb{Z}$ and $\ell\in \omega$, some $a'\in A$ with $a'>\ell\sigma^k(d')$ and $h(a')$ negative. Hence, by compactness, there is some extension  $A\subseteq B \models \mathbb {R}\mathsf{-OVSA}$ and a positive $b'\in B$ with $b'\gg\orb(d')$ and $h(b')$ negative. Then, as in the previous paragraph, the assumption that $\tilde h$ has no positive zeroes implies that $h^*(b')$ is negative as well. Therefore, $h^*$ assumes both positive and negative values on the positive elements of $B$.
\end{claimproof}

Let $\rho$ be given by \Cref{claim:rho}. If $\tilde f_i (\rho)=0$ for every $i< n$, then $f_i(x)=g_i(\sigma(x)-\rho x)$ for some $\sigma$-polynomial $g_i$ over $A$ with $\deg(g_i)=\deg(f_i)-1$ and $h=h_0\circ h_1$ where $h_0=\min(g_0,\ldots,g_{n-1})$ and $h_1(x)=\sigma(x)-\rho x$. Now if $M\supseteq A$ is existentially closed, then, as $\max_i\deg(g_i)<\max_i \deg(f_i)$, we get by the inductive hypothesis that $h_0$ has the IVP on $M$, and so does $h_1$ by \Cref{thm:ivp}. Thus $h$ has the IVP on $M$ by \Cref{rem:ivpcomp}, so there is some $m\in M$ with $a<m<b$ and $h(m)=0$, contradicting the choice of $h,a,b$.

Hence, we may assume that for some $k<n$ we have $\tilde f_0(\rho)=\ldots=\tilde f_{k-1}(\rho)=0$ and $\tilde f_k(\rho),\ldots,\tilde f_{n-1}(\rho)> 0$. 
Put $h_k\coloneqq \min(f_0,\ldots,f_{k-1})$.
\begin{claim}
  There are $B\supseteq A$ and $c_0,c_1\in p(B)$  with  $h_k(c_0)<0<h_k(c_1)$. 
\end{claim}
\begin{claimproof}
  By  \Cref{claim:sign} and \Cref{lemma:pcsec} there is $B'\supseteq A$ containing some $c_0\in p(B')$ such that $f_0(c_0)<0$, hence $h_k(c_0)<0$. Suppose there are  $a'\in L_p$ and $b'\in R_p$ (in particular, $a',b'\in A$) such that, on every point of $B'$ in the interval $(a',b')$, the function $h_k$ is non-positive. Then, a fortiori, $h_k$ is non-positive on $(a',b')\cap A$, hence so is $h$, contradicting that the latter changes sign at $p$. Therefore, by compactness, in some extension $B$ of $B'$ there is $c_1\in p(B)$ such that $h_k(c_1)>0$.
\end{claimproof}
 Now let $q$ be the cut in $B$ left of  $\{z\in A: z>H\}$. Observe that $q$ is a cut right of an invariant convex subgroup of $B$. Hence, by \Cref{lemma:add_c}, there is a structure $C$, generated as an $\mathbb{R}$-vector space by $B\cup \{c\}$, with $c$ realising $q$ and $\sigma(c)=\rho  c$. Note $a\ll c_0,c_1\ll c\ll b$, and $d_0,\ldots,d_{n-1}\ll c$ because, as we already pointed out, $d_0,\ldots,d_{n-1}\in H$. 
Now $h_k(c+c_1)>0$, as for every $i<k$ we have that $f_i(c+c_1)=f^*_i(c+c_1)+d_i=f^*_i(c)+f^*_i(c_1)+d_i=f^*_i(c)+f_i(c_1)>f^*_i(c)=\tilde f_i(\rho)c =0$. As $h_k(c_0)<0$, there is some $i<k$ such that $f_i(c_0)<0$, so, using again that $f^*(c)=0$, we get $h_k(c+c_0)\leq f_i(c+c_0)=f_i(c_0)<0$.  Hence, by the inductive assumption, there are some extension $D\supseteq C$ and $c_2\in D$ between $c_0$ and $c_1$ with $h_k(c_2)=0$. Note that $\abs{c_2}\ll c$, hence for each $j>k$ we have $f_j(c+c_2)=\tilde f_j(\rho)c+f_j(c_2)>0$, as $\sigma^k(c_2)\ll c$ for every $k\in \mathbb{Z}$, $d_j\ll c$, and $\tilde f_j(\rho)>0$.
Thus $h(c+c_2)=\min(h_k(c+c_2),f_k(c+c_2),\ldots, f_{n-1}(c+c_2))=\min(0,f_k(c+c_2),\ldots, f_{n-1}(c+c_2))=0$. As $ a\ll c\ll b$ and $\abs{c_2}\ll c$, we get $a<c+c_2< b$, as required.
\end{proof}

\begin{co}\label{co:ivp_min}
On every existentially closed $M\models \mathbb Q\mathsf{-OVSA}$, minima of finitely many $\sigma$-polynomials over $M$ with rational coefficients have the IVP.
\end{co}
\begin{proof}
  Use \Cref{thm:redtoR} to extend $M$ to some $B\models \mathbb R\mathsf{-OVSA}$, then extend $B$ to an existentially closed $N\models \mathbb R\mathsf{-OVSA}$. Every $\sigma$-polynomial over $M$ with coefficients in $\mathbb Q$ is in particular one with coefficients in $\mathbb R$, hence, by \Cref{thm:minima}, the minimum of finitely many of them has the IVP on $N$, and the conclusion follows.
\end{proof}

\section{The Amalgamation Property}\label{sec:amalgam}
In this section we prove the main result of the paper, namely that the categories of models of $\mathbb R\mathsf{-OVSA}$, of models of $\mathbb Q\mathsf{-OVSA}$, and of oags with an automorphism, where the arrows are the embeddings in the appropriate language, have the Amalgamation Property. We deal with amalgamating $\sigma$-algebraic points (\Cref{defin:sigmaalgebraic}) in  \Cref{subsec:alg_ap}, and with amalgamating $\sigma$-transcendental points (and hence with the general case) in \Cref{subsec:tr_ap}.
\begin{rem}\label{rem:amsaeo}
If $A\subseteq B,C$ are models of $\mathbb R\mathsf{-OVSA}$, then $B$ and $C$ amalgamate over $A$ (in the category of models of $\mathbb R\mathsf{-OVSA}$) if and only if there is $B\subseteq D \models \mathbb R\mathsf{-OVSA}$ such that $C$ embeds into $D$ over $A$, if and only if there is $C\subseteq D \models \mathbb R\mathsf{-OVSA}$ such that $B$ embeds into $D$ over $A$.
\end{rem}
If $E$ is a subset of a model of $\mathbb R\mathsf{-OVSA}$, we denote by $\seq{E}_{\sigma}$ the $L_{\mathbb R,\sigma}$-structure generated by $E$. Recall that $\sigma\inverse\in L_{\mathbb R, \sigma}$.
\subsection{Amalgamating $\sigma$-algebraic points}\label{subsec:alg_ap}
\begin{defin}\label{defin:sigmaalgebraic}
Let $E$ be a subset of $B\models\mathbb R\mathsf{-OVSA}$.
\begin{enumerate}
\item We say that $b\in B$ is \emph{$\sigma$-algebraic over $E$} iff $b$ is a
  zero of some non-zero $\sigma$-polynomial over $\seq{E}_{\sigma}$. We say that $b$ is
  \emph{$\sigma$-transcendental over $E$} otherwise. 
\item We denote the set  of points of $B$ which are $\sigma$-algebraic over $E$ by
  $\cl_B(E)$.
\item We call $E$ \emph{$\sigma$-algebraically closed} in $B$ iff $\cl_B(E)=E$.
\end{enumerate}

\end{defin}
\begin{rem}\label{rem:pregeometry}
  It is routine to show that $\cl_B$ is a pregeometry, and that $\cl_B(E)\models \mathbb R\mathsf{-OVSA}$. Note that $\cl_B(\emptyset)$ includes the set of fixed points of $\sigma$, hence may grow with $B$.
\end{rem}
\begin{lemma}\label{lemma:monot_AP}
Suppose $A,B,C\models \mathbb R\mathsf{-OVSA}$, $A\subseteq B$, $A\subseteq C$, and $B=\langle A,b\rangle_{\sigma}$ for some $b$ which is a root of an absolutely monotone $\sigma$-polynomial over $A$. Then $B$ and $C$ amalgamate over $A$.
\end{lemma}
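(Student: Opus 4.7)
My plan is to construct an $L_{R,\sigma}$-embedding $\phi\colon B\to C^*$ over $A$, where $C^*$ is a pec continuation of $C$; by the preceding fact, this suffices for amalgamation. Using \Cref{thm:ivp} and \Cref{pr:famighnprp}, I would first obtain a unique solution $b^*\in C^*$ of $f(x)=d$, and then define $\phi$ on $B=\langle A,b\rangle_\sigma$ by $\phi|_A=\mathrm{id}_A$ and $\phi(b)=b^*$, extended via the $R[\sigma^{\pm 1}]$-action as $\phi(g(b)+a)=g(b^*)+a$.

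After replacing $f$ by an absolutely monotone $\sigma$-polynomial of minimal degree with $f(b)\in A$ (excluding the trivial case $b\in A$, so that $\deg f\geq 1$), the key technical step is the following minimality claim: for every nonzero $s\in R[\sigma]$ with $\deg s<\deg f$, both $s(b)\notin A$ and $s(b^*)\notin A$. For the first, if $s(b)\in A$ then, working in the PID $R[\sigma^{\pm 1}]$, $u:=\gcd(f,s)=\alpha f+\beta s$ satisfies $u(b)\in A$; since $\tilde u$ divides $\tilde f$, which has no positive real roots, \Cref{pr:famighnprp} makes $u$ absolutely monotone of degree less than $\deg f$, contradicting minimality. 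The symmetric assertion follows by writing $f=uv$ (with $v$ absolutely monotone since $\tilde v\mid\tilde f$) and using that, in a pec continuation $B^*$ of $B$, the unique solution of $u(x)=u(b^*)$ must also be the unique solution of $f(x)=d$ (since $v(u(b^*))=d$), and hence equal to $b$. Well-definedness of $\phi$ (preservation of $=$) then follows by dividing $g_1-g_2$ by $f$ in $R[\sigma]$ (after multiplying by a power of $\sigma$ to clear negative exponents) and using the minimality claim to force the remainder to vanish.

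For preservation of $<$, the same division reduces matters to checking, for nonzero $s\in R[\sigma]$ of degree less than $\deg f$ and $a'\in A$, that $s(b)+a'$ in $B$ and $s(b^*)+a'$ in $C^*$ (both nonzero by the minimality claim) have the same sign. I would establish this by embedding pec continuations of $B$ and of $C^*$ into the monster $\monster$ over a common image of $A$: since $\monster$ contains a unique $\beta$ with $f(\beta)=d$, both $b$ and $b^*$ must map to $\beta$, and the sign of the positive atomic formula $s(x)+a'>0$ is preserved, so both signs agree with that of $s(\beta)+a'$ in $\monster$. The hard part is aligning the two embeddings over $A$: because $A$ is not assumed pec, neither $A\hookrightarrow B$ nor $A\hookrightarrow C^*$ is automatically an immersion, so \Cref{fact:positive-logic-facts}(ii) does not directly amalgamate them. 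I would handle this by first embedding a pec continuation of $A$ into $\monster$, then extending compatibly to pec continuations of $B$ and $C^*$ via saturation, using that the relevant positive diagrams remain finitely satisfiable over this common base.
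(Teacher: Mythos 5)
There is a genuine gap, and it sits exactly where you flag ``the hard part''. Your argument for preservation of $<$ asks to embed pec continuations of $B$ and of $C^*$ into the monster $\monster$ \emph{over a common image of $A$}. But producing such a pair of embeddings over a common copy of $A$ is precisely the statement that $B$ and $C^*$ amalgamate over $A$ --- which is (equivalent to) the lemma you are trying to prove. The base $A$ is not pec, so it is not an amalgamation base, and the assertion that ``the relevant positive diagrams remain finitely satisfiable over this common base'' is exactly the Amalgamation Property for this configuration; this is the open conjecture of the paper (AP for $\mathbb R\mathsf{-OVSA}$ would already give $\mathsf{NIP}$ by \Cref{co:apthennip}), so it cannot be invoked. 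Even the preliminary step of extending an embedding of a pec continuation of $A$ compatibly to $B$ is itself an amalgamation problem over the non-pec base $A$. So the argument is circular at its crucial point; the earlier machinery (uniqueness of $b^*$, the gcd/minimality claim in $R[\sigma^{\pm1}]$, division with remainder) is essentially sound but does not carry the weight of the sign comparison.

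The fix, which is the paper's proof, is to observe that monotonicity of $f$ reduces every atomic condition on $b$ over $A$ to a condition \emph{inside} $A$, so no joint embedding is needed. Writing a general element of $B$ as $g(b)+a'$ with $g\in\mathbb R[\sigma]$ (after clearing $\sigma^{-1}$), and assuming $f$ absolutely increasing with $f(b)=a\in A$, one has
$g(b)\geq e \iff f(g(b))\geq f(e) \iff g(f(b))\geq f(e) \iff g(a)\geq f(e)$,
using that $f$ is increasing and that $f$ and $g$ commute. The last condition mentions only elements of $A$, and the identical chain holds for any root $d$ of $f(x)=a$ in any extension $D\supseteq C$ (which exists by \Cref{thm:ivp}). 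Hence $b\mapsto d$ automatically preserves $\geq$, $\leq$, and therefore $=$ and $<$, giving the embedding $B\to D$ over $A$ directly --- no minimality claim, no common monster, and no uniqueness of the root is needed.
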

\begin{proof}
Suppose $b$ is a solution to $f(x)=a$, with $f$ an absolutely monotone $\sigma$-polynomial and $a\in A$. By replacing $f$ and $a$ with $-f$ and $-a$ if necessary, we may assume that $f$ is absolutely increasing.
By the IVP (\Cref{thm:ivp}), there are $C\subseteq D\models \mathbb R\mathsf{-OVSA}$ and $d\in D$ with $f(d)=a$.
We claim that $B$ embeds into $D$ over $A$ via $b\mapsto d$. For every $\sigma$-polynomial $h$ and every $e\in A$ we have \[h(b)\geq e\iff f(h(b))\geq f(e)\iff  h(f(b))\geq f(e)\iff h(a)\geq f(e) \]
where the first equivalence follows from the fact that $f$ is absolutely increasing, and the second one from the fact that $f$ commutes with $h$.
 In the same way we get that $h(d)\geq e\iff h(a)\geq f(e)$. Since $f(e)\in A$, this shows that $b\mapsto d$ gives an embedding of $B$ into $D$ over $A$, and we conclude by \Cref{rem:amsaeo}.
\end{proof}

\begin{lemma}\label{lemma:sign_change}
Suppose $A\subseteq C\models \mathbb R\mathsf{-OVSA}$ and $p$ is a cut in $A$. Let  $f(x)=\lambda_0x+\lambda_1\sigma(x)-a$ be a $\sigma$-polynomial over $A$ such that $\lambda_0 \lambda_1<0$. If there are  $A\subseteq B\models \mathbb R\mathsf{-OVSA}$ and $b\in p(B)$ with $f(b)=0$, then there are $C\subseteq D\models \mathbb R\mathsf{-OVSA}$ and $d\in p(D)$ with $f(d)=0$.
\end{lemma}
\begin{proof}
  By scaling $f$ by a real number, we may assume $\lambda_0=1$. By replacing $\sigma$ with an automorphism $\sigma'(x)\coloneqq \rho \sigma(x)$ for some positive real number $\rho$, we may additionally assume that $\lambda_1=-1$, so $f(x)=x-\sigma(x)-a$.
  
\textbf{Case 1.} For every $a_0,a_1\in A$ with $a_0\in L_p$ and $a_1\in R_p$ we have $\{x\in A:f(x)\geq 0\}\cap [a_0,a_1]\neq \emptyset$.

By compactness it is enough to find, for every $a_0,a_1$ as above, an extension $C\subseteq C'\models \mathbb R\mathsf{-OVSA}$ and $c'\in C'$ with $a_0<c'<a_1$ and $f(c')=0$.
So fix such  $a_0,a_1$. If there is $c\in p(C)$ with $f(c)\leq 0$, then we can find the desired $C'$ and $c'$ by the assumption of Case 1 and the IVP (\Cref{thm:ivp}), so we may assume $f(c)>0$ for every $c\in p(C)$. Let $C'$ be generated as an  $\mathbb R$-vector space by $C$ and a vector $b'\notin C$. Define $\sigma$ on $C'$ by $\sigma(c+\lambda b')\coloneqq \sigma(c)+\lambda(b'-a)$ for every $\lambda\in \mathbb R$ and $c\in C$ (here $\sigma(c)$ is calculated in the structure $C$). Clearly $\sigma\from C'\to C'$ is an automorphism of the $\mathbb R$-vector space $C'$. Define a linear order on $C'$ by declaring $b'<c$ for every $c\in C$ such that there is $a\in R_p$ with $a\leq c$, and $b'>c$ for all $c\in C$ for which there is no such $a$. We claim that $\sigma$ is an automorphism of the ordered $\mathbb R$-vector space $C'$. To this end, it is enough to check that if $b'<c$ with $c\in C$ then $\sigma(b')<\sigma(c)$ and that if $b'>c$ then $\sigma(b')>\sigma(c)$. Note first that the substructure $C''$ of $C'$ generated by $A\cup \{b'\}$ is a model of $\mathbb R\mathsf{-OVSA}$ isomorphic to $B$ over $A$ via $b'\mapsto b$.

Suppose $b'<c$. Then there is $a\in R_p$ with $a\leq c$. As $C'',C\models \mathbb R\mathsf{-OVSA}$, we have $\sigma(b')<\sigma(a)\leq c$, as required.

Now suppose $b'>c$. If $c\in p(C)$, then $\sigma(b')=b'-a> b'-a-f(c)=b'-a-c+\sigma(c)+a=b'-c+\sigma(c)>\sigma(c)$. If $c\notin p(C)$, then there is $a\in L_p$ with $c\leq a$, so $\sigma(b')>\sigma(a)\geq \sigma(c)$.

Hence $C'\models \mathbb R\mathsf{-OVSA}$, $ C'\supseteq C$, $f(b')=0$ and $b'\in p(C')$, so in particular $a_0<b'<a_1$, as desired.

\textbf{Case 2.} For every $a_0,a_1\in A$ with $a_0\in L_p$ and $a_1\in R_p$ we have $\{x\in A:f(x)\leq 0\}\cap [a_0,a_1]\neq \emptyset$.

Here we may assume $f(c)<0$ for every $c\in p(C)$, and we proceed as in Case 1, except that we declare $b'>c$ for every $c\in C$ such that there is $a\in L_p$ with $a\geq c$, and $b'<c$ for all $c\in C$ for which there is no such $a$. We only need to check that $\sigma$ defined on $C'$ as above preserves the order. This boils down to showing $\sigma(b')<\sigma(c)$ for $c\in p(C)$, as the other cases can be handled by using elements from $A$, exactly as in Case 1. As $f(c)<0$, we have $\sigma(b')=b'-a< b'-a-f(c)=b'-a-c+\sigma(c)+a=b'-c+\sigma(c)<\sigma(c)$, as required.

Clearly, at least one of Case 1 or Case 2 must happen, hence the proof of the lemma is finished.
\end{proof}

\begin{lemma}[Algebraic amalgamation]\label{lemma:algap}
  Suppose $A,B,C\models \mathbb R\mathsf{-OVSA}$, $A\subseteq B$, $A\subseteq C$, and every element of $B$ is $\sigma$-algebraic over $A$, that is, $\cl_B(A)=B$. Then $B$ and $C$ amalgamate over $A$.
  \end{lemma}
  \begin{proof}
  By a standard iteration argument, we may assume that  $B$ is generated over $A$ as an $\mathbb R\mathsf{-OVSA}$ by a single element $b$.  Let $f$ be a $\sigma$-polynomial such that $f(b)=a$ for some $a\in A$. Then, similarly as in the proof of \Cref{thm:ivp}, $f=f'_0\ldots f'_{n'-1}f_0\ldots f_{n-1}$ for some absolutely monotone  $f'_0,\ldots ,f'_{n'-1}$ and some non-absolutely-monotone $f_0,\ldots, f_{n-1}$ of degree $1$.  As the composition $f'_0\ldots f'_{n'-1}$ is absolutely monotone and $f'_0\ldots f'_{n'-1}(f_0\ldots f_{n-1}(b))=a\in A$, putting $a'\coloneqq f_0\ldots f_{n-1}(b)$ and $A'\coloneqq \langle A, a'\rangle _{\sigma}$ we get by \Cref{lemma:monot_AP} that $A'$ and $C$ amalgamate over $A$, so there is $A'\subseteq D\models \mathbb R\mathsf{-OVSA}$  such that $C$ embeds into $D$ over $A$. Now it is enough to show that  $B$ and $D$ amalgamate over $A'$, see \Cref{fig:algapdiag}.

  Put $a''\coloneqq f_{n-1}(b) $ and $A''\coloneqq \langle A', a''\rangle_{\sigma}$. As $f_0\ldots f_{n-2}(a'')\in A'$,  by induction on $n$ we may assume $A''$ and $D$ amalgamate over $A'$, so there is $A''\subseteq D'\models \mathbb R\mathsf{-OVSA}$ such that $D$ embeds into $D'$ over $A'$.
  It is enough to show that $D'$ and $B$ amalgamate over $A''$. Write $g(x)\coloneqq f_{n-1}(x)-a''$. Since each $f_i$ is of the form required by \Cref{lemma:sign_change}, we may apply the latter and obtain $D'\subseteq D''\models \mathbb R\mathsf{-OVSA}$ and $d''\in D''$ with $g(d'')=0$ and $d''$ having the same cut in $A''$ as $b$. As we also have $g(b)=0$, this clearly implies that $b\mapsto d''$ induces an embedding $B\to D''$ over $A''$, which completes the proof.
  \end{proof}
\begin{figure}
  \begin{center}
        \begin{tikzpicture}[scale=4.5]
  \node(a) at (0,0){$A$};
  \node(b) at (1,0.5){$B$};
  \node(c) at (1,-0.5){$C$};
  \node(a1) at (0.5,0){$A'$};
  \node(a2) at (1,0.25){$A''$};
  \node(d) at (1,-0.25){$D$};
  \node(d1) at (1.5,0) {$D'$};
  \node(d2) at (2,0) {$D''$};

\path[->, thick,  font=\scriptsize,>= angle 90]
(a)  edge  (b)
(a2) edge  (b)
(a)  edge  (a1)
(a1) edge  (d)
(a)  edge  (c)
(a1) edge  (a2)
(c)  edge (d)
(d)  edge (d1)
(d1) edge (d2)
(b)  edge (d2)
(a2) edge (d1)
;
\end{tikzpicture}
  \end{center}
  \caption{The diagram constructed in the proof of \Cref{lemma:algap}.}\label{fig:algapdiag}
\end{figure}
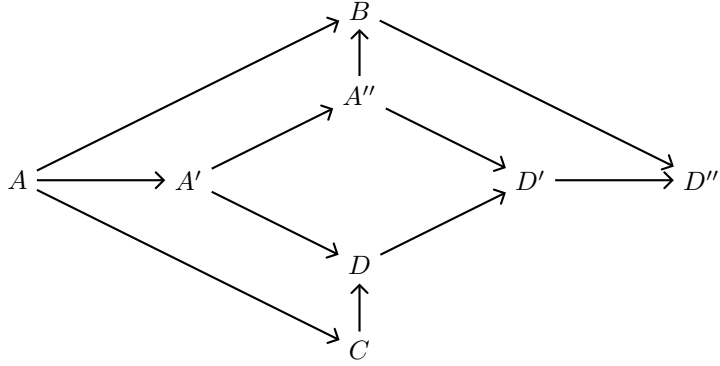

\subsection{Amalgamating $\sigma$-transcendental points}\label{subsec:tr_ap}
We briefly discuss the notion of a \emph{cell}, which we will use in \Cref{lemma:trans_AP}, and refer the reader to~\cite{van_den_dries_tame_1998} for details.

Roughly speaking, an $n$-dimensional cell (over $C$) in an ordered structure $A\supseteq C$ is a subset of $A^n$ obtained by inductively imposing for each $i<n$ a condition of the form $x_i=f_i(x_0,\ldots,x_{i-1})$ or $f_i(x_0,\ldots,x_{i-1})<x_i<g_i(x_0,\ldots,x_{i-1})$ where $f_i\from A^i\to A$ is a continuous\footnote{Here $A$ has the order topology and $A^i$ the induced product topology.} $C$-definable function or $f_i=-\infty$ and $g_i\from A^i\to A$ is a continuous $C$-definable function or $g_i=+\infty$. For a precise definition, see e.g.~\cite[Chapter 3, Definition~2.3]{van_den_dries_tame_1998}.

The \emph{Cell Decomposition Theorem} states that, if $A$ is an o-minimal structure, then every definable subset of $A^n$ is a finite disjoint union of cells.
Because the only functions definable in $\mathbb R\mathsf{-OVS}$ are piecewise $\mathbb R$-affine, we may refine each cell decomposition to one in which every cell is given by affine equations and affine linear inequalities. If such a cell is not open\footnote{An open cell is one in whose definition only inequalities are used (and no conditions of the form $x_i=f(x_0,\ldots ,x_{i-1})$). Equivalently, it is open in the aforementioned topology.}, then all of its elements satisfy a non-trivial affine equation.
\begin{defin}
We call $A\models\mathbb R\mathsf{-OVSA}$  \emph{coinitial} in  $B\models\mathbb R\mathsf{-OVSA}$ iff for every positive $b\in B$ there is a positive $a\in A$ with $a<b$.
\end{defin}
\begin{rem}\label{rem:aciecic}
By \Cref{rem:aspfp} and the fact that fixed points are $\sigma$-algebraic, if $M$ is existentially closed and $A=\cl_M(A)$, then $A$ is coinitial in $M$.
\end{rem}
\begin{lemma}\label{lemma:trans_AP}
Let $A,B,C \models \mathbb R\mathsf{-OVSA}$ with $A\subseteq B,C$ and $B=\langle A,b\rangle_{\sigma}$ for some $b\in B$, and assume $A$ is $\sigma$-algebraically closed in some existentially closed $M\supseteq B$. Then $B$ and $C$ amalgamate over $A$.
\end{lemma}
\begin{proof}
If $b$ is $\sigma$-algebraic over $A$ then we are done by algebraic amalgamation (\Cref{lemma:algap}), so suppose $b$ is $\sigma$-transcendental over $A$. 
By compactness it is enough to show that if $\phi(x)$  is a quantifier-free $L_{\mathbb R, \sigma}$-formula over $A$ which is satisfied by $b$, then it has a realisation in $A$, as then by \Cref{fact:diagram} we may embed $B$ in an extension of $C$. 
As each such formula $\phi(x)$ is equivalent to a formula $\psi(x,\sigma(x),\ldots,\sigma^k(x))$ for some $k<\omega$ and some $L_{\mathbb R}$-formula $\psi(x_0,x_1,\ldots,x_k)$,
 by o-minimality of $\mathbb R\mathsf{-OVS}$, we may assume $\psi$ defines an $\mathbb R\mathsf{-OVS}$  cell over $A$, say $Z$. As $(b,\sigma(b),\ldots,\sigma^k(b))\in Z$  and $b$ is $\sigma$-transcendental over $A$, the cell $Z$ must be open, hence it is defined by a conjunction of inequalities of the form $\ell(x_0,x_1,\ldots,x_k,a)>0$ with $a\in A$ and $\ell$ an $\mathbb{R}$-affine function. 
Thus $\phi(x)$ is equivalent to $\bigwedge_{i=0}^{n-1} f_i(x)>0$ for some $\sigma$-polynomials $f_0,\ldots,f_{n-1}$ over $A$. Put $h(x)\coloneqq \min_{i<n}f_i(x)$. As $b\models \phi$, we have $h(b)>0$, so by \Cref{rem:aciecic} there is some $a\in A$ with $0<a<h(b)$. Let $f^a_i\coloneqq f_i-a$ and $h^a\coloneqq h-a=\min_{i<n}f^a_i$. 

If $h^a$ were negative on every element of $A$ then, as $h^a$ has the IVP on $M$ by \Cref{thm:minima} and $h^a(b)>0$, we could find $b_1\in M$ with $h^a(b_1)=0=f^a_i(b_1)$ for some $i$, which, by $\sigma$-algebraic closedness of $A$ in $M$ would give $b_1\in A$, a contradiction. Thus there is an element $a_1\in A$ with $h^a(a_1)\geq 0$, so  $h(a_1)\geq a>0$ and so $a_1\models \phi$.
\end{proof}

\begin{thm}\label{thm:apqtor}
The category of ordered abelian groups with an automorphism has the Amalgamation Property, and so do the category of models of $\mathbb R\mathsf{-OVSA}$ and the category of models of $\mathbb Q\mathsf{-OVSA}$.
\end{thm}
\begin{proof}
By \Cref{rem:oagstoqovs} and \Cref{pr:Rreduction}, it is enough to prove the AP for models of $\mathbb R\mathsf{-OVSA}$. So let $A,B,C\models \mathbb R\mathsf{-OVSA}$ with $A\subseteq B,C$; we may assume $B=\langle A,b\rangle_{\sigma}$ for some $b\in B$. Let $M\supseteq B$ be existentially closed, and let $A'\coloneqq \cl_M(A)$. By \Cref{rem:pregeometry}, $A'$ is $\sigma$-algebraically closed in $M$.
 Put $B'\coloneqq \langle A',b\rangle_{\sigma}\subseteq M$. By algebraic amalgamation (\Cref{lemma:algap}) applied to $A\subseteq A', C$, there is $C'\supseteq A'$ such that $C$ embeds in $C'$ over $A$, as in \Cref{fig:bkdiag}. As $M$ is existentially closed and $A'$ is $\sigma$-algebraically closed in $M$, we may apply \Cref{lemma:trans_AP}  to $A'\subseteq B', C'$ and find $D\supseteq B'\supseteq B$ such that $C'$ embeds in $D$ over $A'$. As $C$ embeds in $C'$ over $A$, it follows that $C$ embeds in $D$ over $A$, thus $D$ solves the amalgamation problem for $A\subseteq B,C$.
\end{proof}
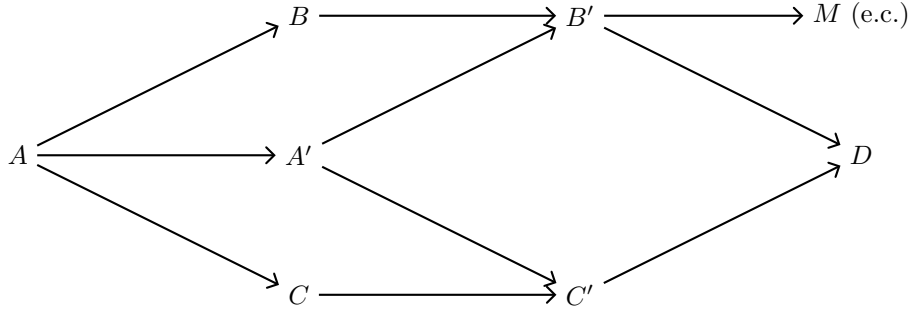
\begin{figure}
  \begin{center}
        \begin{tikzpicture}[scale=3.7]
  \node(a) at (0,0){$A$};
  \node(b) at (1,0.5){$B$};
  \node(c) at (1,-0.5){$C$};
  \node(a1) at (1,0){$A'$};
  \node(a1b) at (2,0.5){$B'$};
  \node(b1) at (3,0.5){$M$ (e.c.)};
  \node(c0) at (2,-0.5){$C'$};
  \node(d) at (3,0) {$D$};

\path[->, thick,  font=\scriptsize,>= angle 90]
(a)   edge (b)
(b)   edge (a1b)
(a1b) edge (b1)
(a)   edge (a1)
(a1)  edge (a1b)
(a1)  edge (c0)
(a)   edge (c)
(c)   edge (c0)
(c0)  edge (d)
(a1b) edge (d)
;
\end{tikzpicture}
  \end{center}
  \caption{The diagram constructed in the proof of \Cref{thm:apqtor}.}\label{fig:bkdiag}
\end{figure}
\begin{rem}
As typical with categories of linearly ordered structures, neither of the categories considered above has pushouts: if $B$, $C$ are arbitrary nontrivial, then no fixed amalgam of $B, C$ over $\set 0$ admits embeddings into $B\times_\mathrm{lex} C$ and into $C\times_\mathrm{lex} B$ commuting with the embeddings of $B$ and $C$.
\end{rem}

The following example shows that the category of ordered abelian groups equipped with two unrelated automorphisms, that is, equipped with an action of the free group $F_2$ by automorphisms, does not have the Amalgamation Property.
\begin{eg}
Take any $(A,\sigma_0)\models \mathbb Q\mathsf{-OVSA}$ such that some absolutely monotone $\sigma_0$-polynomial $f(x)$  with rational coefficients that does not have a root in $A$, e.g.~as in \Cref{eg:norootofabsmon}. Let $b$ be the unique root of $f(x)$ in some existentially closed extension $(M,\sigma_0)\supseteq (A,\sigma_0)$ and, up to enlarging $M$, fix  $\epsilon\ll \seq{A, b}_{\sigma_0}$. Let $B\coloneqq\seq{A, b, \epsilon}_{\sigma_0}$, and observe it is closed under $\sigma_0$. Define $\sigma_1$ on $B$ as $\sigma_1(a+\lambda b+\mu \epsilon)\coloneqq a+\lambda b+(\mu+\lambda) \epsilon$, which is easily shown to be an automorphism of $B$ as an ordered $\mathbb Q$-vector space. Then $(B, \sigma_0, \mathrm{id})$ does not amalgamate with $(B, \sigma_0, \sigma_1)$ over $(A, \sigma_0, \mathrm{id})$.
\end{eg}
\section{Dependent positive theories}\label{sec:posnip}
In this section we study dependent (or $\mathsf{NIP}$) positive theories, which we introduce in  \Cref{def:nip}.  For the sake of brevity, we assume the reader to be familiar with classical model theory, in particular with notions such as that of \emph{indiscernible sequence}. We refer to~\cite{hodges} for basic model theory, and to~\cite{simon_2015} for $\mathsf{NIP}$ theories in the classical sense.
\subsection{A tale of homomorphisms}\label{sec:poslogprelim}
We briefly recall the basics of positive logic, following the foundational work in \cite{BY,BYP}. Note that our terminology differs slightly from that used there, in particular we use the term \emph{positively existentially closed model} instead of \emph{existentially closed model} in order to avoid confusion. From now on, lowercase letters such as $a,b\ldots$  denote tuples of elements of a model; their length is denoted by $\abs a, \abs b\ldots$. Similarly for tuples $x,y,\ldots$ of variables. Formulas are over $\emptyset$ unless otherwise stated, or unless parameters are made explicit, as in $\phi(x,b)$.

\begin{defin}
\label{def:positve-syntax}
Fix a signature $L$. A \emph{positive formula} in $L$ is one obtained by combining atomic formulas using $\wedge$, $\vee$, $\top$, $\bot$, and $\exists$. An \emph{h-universal sentence} is a negation of a positive sentence. An \emph{h-inductive sentence} is a sentence of the form $\forall x\;(\phi(x) \to \psi(x))$, where $\phi(x)$ and $\psi(x)$ are positive. A \emph{positive theory} is a consistent set of h-inductive sentences. 
\end{defin}
As in full first-order logic, we will assume that $L$ contains a symbol $=$ interpreted in every $L$-structure as equality.

Unless otherwise stated, from now on $\phi$, $\psi$, etc.~will denote positive formulas.
\begin{rem}
\label{rem:morleyisation}
Full first-order logic can be studied as a special case of positive logic through \emph{Morleyisation}, see e.g.~\cite[Remark 2.2]{DK}.
\end{rem}

\begin{defin}
\label{def:homomorphism}
A function $f\from A \to B$ between $L$-structures is called a \emph{homomorphism} iff for every atomic formula (equivalently, every positive formula) $\phi(x)$ and every $a \in A$ we have
\[
A \models \phi(a) \implies B \models \phi(f(a)).
\]
We call $f$ an \emph{immersion} (resp.~embedding) iff additionally the converse implication holds for all positive (resp.~all atomic, equivalently all quantifier-free) $\phi(x)$ and all $a \in A$. The codomain $B$ of a homomorphism $f\from A\to B$ is called a \emph{continuation} of $A$.
\end{defin}
\begin{defin}
\label{def:ec-model}
We call a model $M$ of a positive theory $T$ a \emph{positively existentially closed model}, or a \emph{pec model}, iff the following equivalent\footnote{The implications $\text{\ref{point:ecm1}}\Leftrightarrow\text{\ref{point:ecm2}}$ and $\text{\ref{point:ecm3}}\allora \text{\ref{point:ecm1}}$ are immediate,  $\text{\ref{point:ecm1}}\allora \text{\ref{point:ecm3}}$ is~\cite[Lemme~14]{BYP}.} conditions hold.
\begin{enumerate}[label=(\roman*)]
\item\label{point:ecm1} Every homomorphism $f\from M \to A$ with $A \models T$ is an immersion.
\item\label{point:ecm2} For every $a \in M$ and positive formula $\phi(x)$ such that there are $A\models T$ and a homomorphism $f\from M \to A$ with $A \models \phi(f(a))$, we have that $M \models \phi(a)$.
\item \label{point:ecm3}For every $a \in M$ and  positive formula $\phi(x)$ such that $M \not \models \phi(a)$ there is a positive formula $\psi(x)$ with $T \models \neg \exists x (\phi(x) \wedge \psi(x))$ and $M \models \psi(a)$.
\end{enumerate}
We will usually denote pec models of a positive theory $T$ by $M,N$, and arbitrary models and arbitrary parameter sets by $A,B,C,D$.
\end{defin}
\begin{fact}
\label{fact:positive-logic-facts}
Let $T$ be a positive theory.
\begin{enumerate}[label=(\roman*)]
\item \emph{(Direct limits)} The limit of a direct system of (pec) models is a (pec)~model.
\item \emph{(Amalgamation)} If $f_0\from A\to B_0$, $f_1\from A\to B_1$ are homomorphisms and at least one of $f_0$,$f_1$ is an immersion, then there are a model $C$ and $g_0\from B_0\to C$ and $g_1\from B_1\to C$ such that $g_0f_0=g_1f_1$. In particular, every pec model is an \emph{amalgamation base}, that is, every amalgamation problem as above with $A$ pec has a solution.
\item \emph{(Pec continuation)} For every $A \models T$ there is are a pec $M\models T$ and a homomorphism $f\from A \to M$.
\item \emph{(Compactness)} Let $\Sigma(x)$ be a set of positive formulas and suppose that for every finite $\Sigma_0(x) \subseteq \Sigma(x)$ there are $A \models T$ and $a \in A$ such that $A \models \Sigma_0(a)$. Then there are a pec  $M\models T$ and $a \in M$ such that $M \models \Sigma(a)$.
\end{enumerate}
\end{fact}

\begin{defin}
\label{def:jcp}
We say that a positive theory $T$ has the \emph{Joint Continuation Property} or \emph{JCP} if the following equivalent\footnote{The implication $\neg\text{\ref{point:jcp2}}\allora\neg\text{\ref{point:jcp1}}$ is straightforward, while $\text{\ref{point:jcp2}}\allora\text{\ref{point:jcp1}}$ follows from a compactness argument using the positive version of \Cref{fact:diagram}, namely that continuations of $A$ are the same as models of the positive quantifier-free $L(A)$-sentences true in $A$.} conditions hold.
\begin{enumerate}[label=(\roman*)]
\item\label{point:jcp1} For every two models $A_1$ and $A_2$ there are $B\models T$ and homomorphisms $A_1 \to B \leftarrow A_2$.
\item\label{point:jcp2} If $T \models \neg \phi \vee \neg \psi$, where $\phi$ and $\psi$ are positive sentences, then $T \models \neg \phi$ or $T \models \neg \psi$.
\end{enumerate}
\end{defin}
The JCP is a counterpart of completeness: for \emph{h-universal} theories, that is, theories axiomatised by h-universal sentences, it corresponds to being the h-universal theory of some structure, see~\cite[Lemme~18]{BYP}.
\begin{defin}
\label{def:saturated}
Let $M$ be a pec model of a positive theory $T$. We say that $M$ is \emph{$\kappa$-saturated} if whenever $A \subseteq M$  is such that  $\abs{A} < \kappa$ and $\Sigma(x)$ is a set of positive formulas over $A$, then $\Sigma(x)$ is satisfiable in $M$ if and only if it is finitely satisfiable in $M$.
\end{defin}
\begin{fact}
\label{fact:saturated-models}
For every $A\models T$ and every $\kappa \geq \abs{A} + \abs{T}$ there is a $\kappa^+$-saturated continuation $N$ of $A$ with $\abs{N} \leq 2^\kappa$.
\end{fact}

Fix a sufficiently large cardinal $\bar{\kappa}$. We will say a set is $\emph{small}$ if it is of cardinality smaller than $\bar{\kappa}$.
\begin{ass}
\label{conv:monster-model}
Until the end of the paper, unless otherwise stated, we assume that $T$ is a positive theory with the JCP, so we can (and will) work in a \emph{monster model} $\monster$ (sometimes also called a universal domain), that is:
\begin{itemize}
\item \emph{positively existentially closed}: $\monster$ is a pec model;
\item \emph{very homogeneous}: every partial immersion $f\from \monster \to \monster$ with small domain extends to an automorphism  of $\monster$;
\item \emph{very saturated}: every finitely satisfiable small set of positive formulas over $\monster$ is satisfiable in $\monster$.
\end{itemize}
We will assume all parameter sets to be small, except when we consider the monster model or a superset of it as a parameter set.
\end{ass}
We will write $\tp(a/B)$ for the set of all positive formulas over $B$ satisfied by $a$. So we have $\tp(a/B) = \tp(a'/B)$ if and only if there is an automorphism $f\from \monster \to \monster$ fixing $B$ pointwise such that $f(a) = a'$. We also write $a \equiv_B a'$ in this case.
By a \emph{maximal type} (over $A$) in $T$ we will mean a maximal consistent with $T$ set of positive formulas (over $A$). Equivalently, it is the set of positive formulas satisfied by a tuple \emph{of the monster model}.\footnote{In particular, it is a tuple from a pec model, cf.~\Cref{rem:primetypes}.} By a partial type (over $A$) in $T$ we will mean any consistent set of positive formulas (over $A$). We sometimes refer to types as \emph{positive types}. 
From now until \Cref{rem:primetypes}, we will exclusively work with maximal types (unless we are talking of partial types).

If we call a sequence \emph{indiscernible}, we mean with respect to positive formulas (over $\emptyset$, if no set of parameters is mentioned).
\begin{defin}
\label{def:indiscernible-sequence-based-on-long-sequence}
We say that a sequence $(b_j)_{j \in J}$ is \emph{based on} a sequence $(a_i)_{i \in I}$ \emph{over $A$} iff  for all $\bla j1<n\in J$ there are $i_1 < \ldots < i_n \in I$ with $b_{j_1} \ldots b_{j_n} \equiv_A a_{i_1} \ldots a_{i_n}$.
\end{defin}
The following appears as~\cite[Lemma 3.1]{pillay_forking_2000} and~\cite[Lemma 1.2]{ben-yaacov_simplicity_2003}.
\begin{fact}\label{fact:extracting}
Let $A$ be a parameter set, $\kappa$ a cardinal, and let $\lambda = \beth_{(2^{\abs{T} + \abs{A} + \kappa})^+}$. Then for every sequence $(a_i)_{i < \lambda}$ of $\kappa$-tuples there is an $A$-indiscernible sequence $(b_i)_{i < \omega}$ based on $(a_i)_{i < \lambda}$.
\end{fact}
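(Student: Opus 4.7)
The plan is to run the standard Erdős--Rado plus compactness extraction argument, adapted to positive logic. The key observation is that the number of positive types over $A$ in $n\cdot\kappa$ many variables is bounded by $\mu := 2^{\abs{T}+\abs{A}+\kappa}$ for every finite $n$, since a positive type is a set of positive formulas and the number of such formulas with parameters from $A$ in at most $\kappa$ variables is bounded by $\abs{T}+\abs{A}+\kappa$. The choice $\lambda = \beth_{(2^{\abs{T}+\abs{A}+\kappa})^+} = \beth_{\mu^+}$ is exactly what is required to apply the Erdős--Rado partition theorem $\beth_{n-1}(\mu)^+\to(\mu^+)^n_\mu$ for all $n<\omega$ simultaneously.

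First, for each $n\geq 1$, colour the strictly increasing $n$-tuples $i_1<\cdots<i_n<\lambda$ by the positive type $\tp(a_{i_1}\ldots a_{i_n}/A)$. Since $\lambda>\beth_{n-1}(\mu)^+$, Erdős--Rado produces a subset $I_n\subseteq\lambda$ of cardinality $\mu^+$ (still well-ordered by the order inherited from $\lambda$) on which the colouring is constant. By iterating and taking nested subsequences $I_1\supseteq I_2\supseteq\cdots$, then diagonalising (the usual argument, exploiting that each $I_n$ has cardinality $\geq\omega$), one obtains a sequence $(c_k)_{k<\omega}$ of terms of $(a_i)_{i<\lambda}$ such that for every $n$ and every $k_1<\cdots<k_n<\omega$ the type $\tp(c_{k_1}\ldots c_{k_n}/A)$ depends only on $n$. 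This is exactly indiscernibility with respect to positive formulas, and by construction the sequence is based on $(a_i)_{i<\lambda}$.

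Alternatively, and perhaps more in the spirit of the positive setting, one can combine Ramsey/Erdős--Rado with compactness as given by \Cref{fact:positive-logic-facts}(iv). Consider the partial type $\Sigma((x_i)_{i<\omega})$ over $A$ expressing that the sequence $(x_i)_{i<\omega}$ is $A$-indiscernible and based on $(a_i)_{i<\lambda}$: the first clause is a conjunction, for each $n$ and each pair of increasing $n$-tuples of indices in $\omega$, of the statement that the two $n$-tuples satisfy the same positive formulas over $A$; the second clause says each finite initial segment realises some positive type of a finite subtuple of $(a_i)_{i<\lambda}$. Any finite subset of $\Sigma$ involves only finitely many $n$-tuples and finitely many positive formulas, hence is satisfied by any finite monochromatic subsequence produced by Erdős--Rado; by positive compactness $\Sigma$ is realised in $\monster$.

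There is no real obstacle here: the proof is a routine transcription of the classical one, with the caveat that ``same type'' means ``same positive type'', which is harmless since the counting bound on positive types is the same as the classical one, and positive compactness is available. The only subtlety to double-check is that the ``same type'' relation on tuples is an equivalence, which is trivial, and that Erdős--Rado can be applied in the form stated: this requires no modification.
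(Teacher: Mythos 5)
The paper offers no proof of this Fact: it is quoted from \cite[Lemma 3.1]{Pi} and \cite[Lemma 1.2]{BY}, so your argument has to stand on its own, and unfortunately neither of your two routes does. The first route is irreparably broken. The cardinality bookkeeping already fails: Erd\H{o}s--Rado in the form $\beth_{n-1}(\mu)^+\to(\mu^+)^n_\mu$ returns a homogeneous set of size only $\mu^+$, which is far too small to serve as input to the next application (that needs size $\beth_n(\mu)^+$), and any scheme that keeps the nested sets large enough for all later stages would require an infinite strictly descending sequence of ordinals. Worse, the conclusion of your first route --- an infinite $A$-indiscernible \emph{subsequence} $(c_k)_{k<\omega}$ of $(a_i)_{i<\lambda}$ --- is not just unproved but false in general: encoding an arbitrary colouring $c\from[\lambda]^{<\omega}\to 2$ by relations $R_n(x_1,\dots,x_n)$, such a subsequence would give a set homogeneous for $c$, i.e.\ $\lambda\to(\omega)^{<\omega}_2$, and the least cardinal with this property is the Erd\H{o}s cardinal $\kappa(\omega)$, which is inaccessible and whose existence is not provable in ZFC. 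This is precisely why \Cref{def:indiscernible-sequence-based-on-long-sequence} asks only that $(b_i)$ be \emph{based on} $(a_i)$: the $b_i$ must in general be new elements.

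Your second route founders on the fact that the set $\Sigma$ you describe is not a set of positive formulas, so \Cref{fact:positive-logic-facts}(iv) does not apply to it: ``$x_{\bar\jmath}$ and $x_{\bar\jmath'}$ satisfy the same positive formulas over $A$'' carries negative information, and ``realises the type of some increasing tuple of $(a_i)$'' is an infinitary disjunction of infinitary conjunctions. If you retreat to the positive finite approximations, compactness only produces a sequence whose $n$-types lie in the \emph{closure} of $\Gamma_n\coloneqq\set{\tp(a_{i_1}\cdots a_{i_n}/A): i_1<\dots<i_n<\lambda}$, which is strictly weaker than membership in $\Gamma_n$ as required by \Cref{def:indiscernible-sequence-based-on-long-sequence}. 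The actual content of the cited lemmas is the construction of a \emph{coherent chain} of maximal positive types $q_n\in\Gamma_n$ (every increasing restriction of $q_{n+1}$ to $n$ of its variables equals $q_n$); the set of positive formulas $\bigcup_n q_n(x_1,\dots,x_n)$ is then finitely satisfiable, positive compactness realises it, and maximality of the $q_n$ forces the realisation to be $A$-indiscernible and based on $(a_i)$. Producing such a chain for all $n$ simultaneously is where the full strength of $\lambda=\beth_{\mu^+}$ (as opposed to $\beth_\omega(\mu)^+$) is spent: one shows, by Erd\H{o}s--Rado applied to subsets of every size $\beth_\alpha(\mu)^+$ with $\alpha<\mu^+$ followed by a pigeonhole of the $\mu^+$ many values of $\alpha$ against the at most $\mu$ many candidate types, that some length-$n$ chain admits witnessing homogeneous sets of unboundedly large size below $\lambda$, and that every such chain extends to a length-$(n+1)$ chain with the same property. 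This step is the heart of the lemma and is missing from your write-up.
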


\subsection{Pillay and Robinson theories}\label{sec:pilrob}

In this subsection we take a look at how to code classical inductive theories in positive logic.

\begin{defin}
  A positive theory is \emph{Pillay} iff the negation of every atomic formula is equivalent modulo $T$ to some positive quantifier-free formula.
\end{defin}
\begin{rem}\label{rem:pillayqfpos}
  By an easy induction on formulas, we see that in a Pillay theory every quantifier-free formula is equivalent to a positive quantifier-free one.
\end{rem}
\begin{rem}\label{rem:pilparmor}
Pillay theories, named after \cite{pillay_forking_2000}, may be identified with classical inductive theories, and vice-versa. The pec models of Pillay theories  correspond to the existentially closed models of classical inductive theories in the usual  sense\footnote{We recalled a special case in~\Cref{defin:ec}.}. Moreover, every homomorphism between models of a Pillay theory is an embedding.
\end{rem}
\begin{proof}
  Let $T$ be Pillay. In particular, the (h-inductive) axioms of $T$ are inductive sentences. Moreover, every homomorphism between models of $T$ is easily checked to be an embedding.

  Conversely, given a classical inductive $L'$-theory $T'$, we may view it as a Pillay positive theory by a partial Morleyisation.  Namely, for each atomic $L'$-formula $\phi(x)$, we add a predicate symbol $R_{\neg\phi}$ to the language. Call the resulting language $L$,   and consider the h-inductive axioms $\forall x\; \neg (\phi(x)\wedge R_{\neg\phi}(x))$ and $\forall x\; (\phi(x)\vee R_{\neg\phi}(x))$, where $\phi(x)$ is atomic. By negation normal form, modulo these axioms every quantifier-free formula is equivalent to a positive quantifier-free one. Now translate the (inductive) axioms of $T'$ into h-inductive $L$-sentences and add them to the axioms above, obtaining a Pillay theory $T$. Models of $T$ coincide with the obvious $L$-expansions of models of $T'$, every quantifier-free $L'$-formula is identified with a positive quantifier-free $L$-formula, and $L$-homomorphisms are the same as $L'$-embeddings.

  The conclusion follows by the definitions of pec model and of existentially closed model.
\end{proof}

\begin{defin}
  If $T$ is a Pillay theory, we denote by $T_\mathrm{sub}$ the theory consisting of all consequences of $T$ of the form
  \begin{enumerate}
  \item $\forall x\; \phi(x)$, for $\phi$ positive quantifier-free, or
  \item $\forall x\; \neg \phi(x)$, for $\phi$ positive quantifier-free.
  \end{enumerate}
\end{defin}

The second clause in the definition above is indeed needed: even if $\neg\phi(x)$ is equivalent to a positive quantifier-free formula in $T$, this is not true without suitable axioms. We will show below that those in $T_\mathrm{sub}$ suffice.

\begin{rem}\label{rem:huintf}\*
  \begin{enumerate}
  \item Let $T$ be Pillay, let $\phi(x)$ be an atomic formula, and let $\psi(x)$ be a positive quantifier-free formula equivalent to $\neg \phi(x)$ modulo $T$. Then, $T_\mathrm{sub}$ contains the axioms $\forall x\; (\phi(x)\vee \psi(x))$ and $\forall x\; \neg(\phi(x)\wedge \psi(x))$. Therefore, $\psi(x)$ is equivalent to $\neg \phi(x)$ modulo $T_\mathrm{sub}$ as well.
  \item In particular, $T_\mathrm{sub}$ is Pillay, and \Cref{rem:pillayqfpos} applies to it.
  \item It follows that $T_\mathrm{sub}$ is (equivalent to) the set of universal consequences, in the classical sense, of models of $T$; equivalently, it is the common theory of substructures of models of $T$ (see e.g.~\cite[Corollary~6.5.3]{hodges}). If we view $T$ as a classical inductive theory, then $T_\mathrm{sub}$ is usually denoted by $T_\forall$. We prefer to avoid this notation for the reason below.
  \item The theory $T_\mathrm{sub}$ is not the same as the set of h-universal consequences of $T$ (which is sometimes denoted by $T_\forall$ as well) since, for positive $\phi(x), \psi(x)$, the sentence  $\forall x\; (\phi(x)\vee \psi(x))$ is not h-universal. Note that the partial Morleyisation in the proof of \Cref{rem:pilparmor} requires sentences of the latter form.
  \end{enumerate}
\end{rem}
If $T$ is Pillay, amalgamation of embeddings between models of $T_\mathrm{sub}$ corresponds to a form of infinitary quantifier elimination, \Cref{fact:robinfqe} below. By contrast, amalgamation of homomorphisms between models of the h-universal consequences of $T$ has quite more drastic consequences, see~\cite[Proposition~9]{poizat_positive_2018}.
\begin{defin}
A \emph{Robinson theory} is a Pillay theory such that the category of models of $T_\mathrm{sub}$ with embeddings as arrows has the Amalgamation Property.
\end{defin}
\begin{rem}\label{rem:rovsarob}
  The theory of ordered abelian groups with an automorphism, and the theories $\mathbb Q\mathsf{-OVSA}$ and $\mathbb R\mathsf{-OVSA}$ studied in the previous sections, may all be viewed as positive theories which are Pillay, as $\neg (x<y)$ is equivalent in them to $(x=y)\vee (x>y)$, and $\neg (x=y)$ is equivalent to $(x<y)\vee (x>y)$. Moreover, \Cref{thm:apqtor} shows they are Robinson.
\end{rem}

\begin{fact}\label{fact:robinfqe}
  Let $T$ be Robinson. For all positive formulas $\phi(x)$, there is a collection of positive quantifier-free formulas $(\phi_i(x))_{i\in I}$ such that $\phi(x)$ is equivalent to the infinitary conjunction $\bigwedge_{i\in I}\phi_i(x)$ in every pec model of $T$.
\end{fact}
\begin{proof}
See e.g.~\cite[Theorem 8.5.2 and Theorem 8.5.4]{hodges}.
\end{proof}

\subsection{Generalising NIP}\label{sec:nipbasics}
Recall that $T$ denotes a positive theory with the JCP, and $\monster$ a monster pec model.
We start by introducing a notion of independence property for positive logic, which extends the usual independence property in full first-order logic.

\begin{defin}\label{def:nip}
Fix a positive theory $T$ with the JCP, and let $\phi(x;y)$ be a positive formula.\footnote{Strictly speaking, we should talk of \emph{partitioned formulas}, that is, formulas with free variables partitioned in ``object'' and ``parameter'' variables.} We say that $\phi$ has the \emph{Independence Property} \emph{$\mathsf{IP}$} iff there are a positive $\psi(x;y)$ and tuples $(a_i)_{i\in\omega}$, $(b_W)_{W\subseteq \omega}$ from some $A\models T$ such that $T\models \forall x, y\; \neg (\phi(x;y)\wedge \psi(x;y))$ and
  \begin{gather*}
i\in W\then    A\models \phi(a_i;b_W)  \\
i\notin W\then    A\models \psi(a_i;b_W)  
\end{gather*}
Otherwise, we say that $\phi$ is \emph{$\mathsf{NIP}$}. We say that $T$  is \emph{$\mathsf{NIP}$}, or \emph{dependent}, iff every positive $\phi$ is $\mathsf{NIP}$.
\end{defin}

\begin{rem}\label{rem:pecnipenough}
  If we can find, in a model $A$ of $T$, tuples witnessing that a formula has $\mathsf{IP}$, then by continuing $A$ to a pec $M$ we can find such tuples in a pec $M\models T$. So, in order to check that $T$ is $\mathsf{NIP}$, it is enough to do so for pec models.
\end{rem}
Recall that a positive theory is \emph{bounded} iff there is an absolute bound on the cardinality of its pec models.  
\begin{rem}\label{rem:canstretch}
By compactness, if $T$ has $\mathsf{IP}$, then it satisfies \Cref{def:nip} with $\omega$ replaced by any cardinal. In particular $T$ cannot be bounded because the $a_i$ need to be pairwise distinct.
\end{rem}
\begin{defin}[Alternation number]
  Let $\phi(x;y)$ and $\psi(x;y)$ be mutually contradictory positive formulas.
  \begin{enumerate}
  \item Let $b$ be a tuple of parameters and $(a_i)_{i<\omega}$ an indiscernible sequence. 
    If there is a maximum $n\in \omega$ such that there exist $i_0<\ldots<i_n$ with $\models \phi(a_{i_j};b)$ for even $j\leq n$ and $\models \psi(a_{i_j};b)$ for odd $j\leq n$, we define $\operatorname{alt}(\phi,\psi,(a_i), b)=n$. In the case where there is no maximum such $n$,  we put $\operatorname{alt}(\phi,\psi,(a_i), b) = \infty$.
\item  We set $\operatorname{alt}(\phi,\psi)\coloneqq\sup\set{\operatorname{alt}(\phi,\psi,(a_i), b): (a_i)_{i<\omega}, b\in \monster}$.
\end{enumerate}
\end{defin}
\begin{lemma}\label{lem:alt}
Let $\phi(x;y)$ and $\psi(x;y)$ be mutually contradictory positive formulas. Then $\phi(x;y)$ has $\mathsf{IP}$ witnessed by $\psi(x;y)$ if and only if $\operatorname{alt}(\phi,\psi)=\infty$. 

Moreover, if these equivalent conditions hold, then there are $(a_i)_{i<\omega}$ and $b$ such that  $\operatorname{alt}(\phi,\psi,(a_i), b)=\infty$.
\end{lemma}
\begin{proof}
  $\Leftarrow$: Note that, by compactness, if $\operatorname{alt}(\phi,\psi)=\infty$ then there is some $b$ and a sequence $(a_i)_{i<n}$ such that $\models \phi(a_i;b)$ for every even $i$ and $\models \psi(a_i;b)$ for every odd $i$. Now proceed as in e.g.~\cite[proof of Lemma 2.7]{simon_2015}.
  
$\Rightarrow$: For every cardinal $\lambda$ we can find by compactness a sequence $(a_i)_{i<\lambda}$ such that for every $W\subseteq \lambda$ the set $\pi_W\coloneqq \set{\phi(a_i;y):i\in W}\cup \set{\psi(a_j;y):j\in \lambda \setminus W}$ is consistent. By \Cref{fact:extracting}, we may assume that $(a_i)_{i<\omega}$ is indiscernible. Let $b\models \pi_{\{2i:i\in \omega\}}$. Then $(a_i)_{i<\omega}$ and $b$ witness that $\operatorname{alt}(\phi,\psi)=\infty$. This also proves the ``moreover'' clause.
\end{proof}

\begin{rem}\label{rem:altpar}
If there is $(\phi,\psi,(a_i), b)$ with infinite alternation number, then $(\phi',\psi',(a_i), b')$ has infinite alternation number for some $\phi',\psi'$ over $\emptyset$ and some $b'$.
\end{rem}
\begin{proof}
  Suppose $\phi(x;y,d)\wedge\psi(x;y,d)$ is inconsistent. Then, since $d$ is a tuple from a pec model, there is $\theta\in \tp(d/\emptyset)$ such that $\theta(z)\wedge (\phi(x;y,z)\wedge\psi(x;y,z))$ is inconsistent. It is then sufficient to set $\phi'(x;y,z)\coloneqq\phi(x;y,z)\wedge \theta(z)$,  $\psi'(x;y,z)\coloneqq\psi(x;y,z)\wedge \theta(z)$ and $b'\coloneqq (b,d)$.
\end{proof}

\begin{pr}\label{pr:lor}
  If $\phi_0$ and $\phi_1$ are $\mathsf{NIP}$ then so is $\phi_0\vee\phi_1$.
\end{pr}
\begin{proof}
  Suppose not, as witnessed by $\psi(x;y)$ inconsistent with $(\phi_0\vee \phi_1)(x;y)$ and $(a_i)_{i<\omega}, b$ such that $\models (\phi_0\vee \phi_1)(a_i;b)$ for every even $i$ and $\models \psi(a_i;b)$ for every odd $i$. For some $j<2$ we must have that $\phi_j(a_i;b)$  is true for infinitely many even $i$. But $\phi_j\wedge \psi$ is still inconsistent, so $\psi$ also witnesses that $\phi_j$ has $\mathsf{IP}$.
\end{proof}
 
\begin{pr}\label{pr:land}
  If $\phi_0, \phi_1$ are $\mathsf{NIP}$ then so is $\phi_0\wedge\phi_1$.  
\end{pr}
\begin{proof}
  Suppose not; then there are some positive $\psi$ inconsistent with $\phi_0\wedge \phi_1$, some indiscernible $(a_i)_{i<\abs T^+}$ and some $b$ such that $\phi_0\wedge \phi_1(a_{2i};b)$ and $\psi(a_{2i+1}; b)$ hold for every $i$.  If $\phi_0(a_{2i+1}; b)$ holds for infinitely many $i<\abs T^+$, then $\phi_0\wedge \psi$ witnesses that $\phi_1$ has $\mathsf{IP}$. Otherwise, up to discarding finitely many $i$, for each $i<\abs T^+$ there is a positive  $\theta_i$ such that $\theta_i(x; b)\wedge \phi_0(x; b)$ is inconsistent and $a_{2i+1}\models \theta_i(x;b)$. By the pigeonhole principle there is a positive $\theta$ such that for infinitely many $i$ we have $\theta_i=\theta$. Now $\psi\wedge \theta$ witnesses that $\phi_0$ has $\mathsf{IP}$.
\end{proof}
Recall that, for $\phi(x;y)$ a partitioned formula, $\phi^{\mathrm{op}}(y;x)$ denotes the same formula but with the opposite partition, i.e.~with $y$ the tuple of object variables and $x$ the tuple of parameter variables.
\begin{pr}
A positive formula   $\phi$ has $\mathsf{IP}$ witnessed by $\psi$ if and only if $\phi^{\mathrm{op}}$ has $\mathsf{IP}$ witnessed by $\psi^{\mathrm{op}}$.
\end{pr}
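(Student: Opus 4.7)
\medskip
\noindent\emph{Proof plan.} The two directions are symmetric, since $(\phi^{\mathrm{op}})^{\mathrm{op}}=\phi$ and $(\psi^{\mathrm{op}})^{\mathrm{op}}=\psi$, so it suffices to prove one implication. Note also that the incompatibility condition $T\models\forall x,y\;\neg(\phi(x;y)\land\psi(x;y))$ is exactly the same statement as $T\models\forall y,x\;\neg(\phi^{\mathrm{op}}(y;x)\land\psi^{\mathrm{op}}(y;x))$, so that part is automatic.

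The plan is to run the classical VC-duality trick, using compactness to replace the index set $\omega$ by $\mathcal{P}(\omega)$. Suppose $\phi$ has $\mathsf{IP}$ witnessed by $\psi$. By \Cref{rem:canstretch} and \Cref{fact:positive-logic-facts}(iv) we can find, in some pec model, tuples $(a_W)_{W\subseteq\omega}$ and $(b_{\mathcal{V}})_{\mathcal{V}\subseteq\mathcal{P}(\omega)}$ such that $W\in\mathcal{V}\Rightarrow\models\phi(a_W;b_{\mathcal{V}})$ and $W\notin\mathcal{V}\Rightarrow\models\psi(a_W;b_{\mathcal{V}})$; this is possible because the relevant set of positive formulas is finitely satisfiable, by taking finitely many indices from the given $\omega$-shattered pattern.

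For each $i\in\omega$ set $\mathcal{V}_i\coloneqq\set{W\subseteq\omega:i\in W}$, and define
\[
\alpha_i\coloneqq b_{\mathcal{V}_i}\quad(i\in\omega),\qquad \beta_V\coloneqq a_V\quad(V\subseteq\omega).
\]
Then for any $i\in\omega$ and $V\subseteq\omega$ we have $i\in V\iff V\in\mathcal{V}_i$, hence $i\in V$ yields $\models\phi(a_V;b_{\mathcal{V}_i})$, i.e.~$\models\phi^{\mathrm{op}}(\alpha_i;\beta_V)$, and $i\notin V$ yields $\models\psi(a_V;b_{\mathcal{V}_i})$, i.e.~$\models\psi^{\mathrm{op}}(\alpha_i;\beta_V)$. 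Together with the already-noted incompatibility of $\phi^{\mathrm{op}}$ and $\psi^{\mathrm{op}}$, this exhibits $\mathsf{IP}$ for $\phi^{\mathrm{op}}$ witnessed by $\psi^{\mathrm{op}}$.

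There is no real obstacle: the only subtle point is ensuring that the enlargement of the indexing set can be performed inside a model of $T$, which is exactly what compactness (\Cref{fact:positive-logic-facts}(iv)) provides, once one observes that every finite subpattern on the enlarged index set is already realised in the original $\omega$-indexed witness.
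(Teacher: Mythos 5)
Your proof is correct, and it is exactly the argument the paper has in mind: the paper's proof simply defers to \cite[Lemma~2.5]{simon_2015}, whose standard shattering-duality argument (enlarge the index set by compactness, then swap roles via the sets $\mathcal{V}_i=\set{W:i\in W}$) is what you have written out, correctly adapted to carry the witnessing formula $\psi$ alongside $\phi$. Nothing further is needed.
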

\begin{proof}
Essentially the same as in~\cite[Lemma~2.5]{simon_2015}.
\end{proof}

\begin{pr}\label{pr:nip1var}
If all positive formulas $\phi(x;y)$ with $\abs y=1$ are $\mathsf{NIP}$, then $T$ is $\mathsf{NIP}$.
\end{pr}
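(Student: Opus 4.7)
Suppose for contradiction that $T$ has a positive formula with $\mathsf{IP}$; then some $\phi(x; y)$ has $\mathsf{IP}$ and we may assume $\abs y \ge 2$ (the case $\abs y = 1$ is ruled out by hypothesis). Split $y = (y_1, y_2)$ with $\abs{y_2} = 1$ and $\abs{y_1} \ge 1$, and suppose $\phi(x; y_1, y_2)$ has $\mathsf{IP}$ witnessed by a contradictory positive $\psi$. Define $\phi^*(x, y_1; y_2) \coloneqq \phi(x; y_1, y_2)$ together with the companion $\psi^*$ defined analogously; as $\phi^*$ has $\abs{y_2} = 1$ on the right, the hypothesis directly yields that $\phi^*$ is $\mathsf{NIP}$. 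The plan is to derive $\mathsf{IP}$ of $\phi^*$, producing the desired contradiction.

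By \Cref{lem:alt} together with a König-style selection of an alternating subsequence, I obtain an $\emptyset$-indiscernible $(a_i)_{i<\omega}$ and $b = (b_1, b_2)$ with strict alternation $\phi(a_i; b) \Leftrightarrow i$ even. Using the EM-type of this sequence, extend to an $\emptyset$-indiscernible $(a_i)_{i<\lambda}$ for $\lambda$ past the threshold of \Cref{fact:extracting}, and by compactness find $b' = (b'_1, b'_2)$ realizing the same strict alternation along the full $\lambda$-long sequence. The crucial step is to extract a sequence $(a'_i)_{i<\omega}$ which is indiscernible over $b'_1$ (equivalently, $(a'_i, b'_1)_{i<\omega}$ is $\emptyset$-indiscernible) and such that $\phi(a'_i; b'_1, b'_2)$ continues to alternate with $i$'s parity. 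Once this is achieved, we have $\operatorname{alt}(\phi^*, \psi^*, (a'_i, b'_1), b'_2) = \infty$ by construction, and \Cref{lem:alt} then gives $\mathsf{IP}$ of $\phi^*$, contradicting the hypothesis.

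To realize the refined extraction: using the full shattering available on $(a_i)_{i<\lambda}$ (a consequence of $\mathsf{IP}$ of $\phi$ together with $\emptyset$-indiscernibility and compactness), I argue finite satisfiability of the compound positive type over $\set{b'_1, b'_2}$ expressing both the $b'_1$-indiscernibility conditions (matching of $b'_1$-types of all finite subtuples) and the alternation pattern. For each finite instance, one invokes an Erdős--Rado style argument to locate a finite $b'_1$-indiscernible configuration of $a_j$'s in $(a_i)_{i<\lambda}$ containing both even- and odd-parity indices, on which the original $b'$ realises the desired $\phi/\psi$ pattern; compactness then produces the required infinite $(a'_i)$.

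The main obstacle lies in this simultaneous preservation: on the one hand, the alternation of $\phi(\cdot; b'_1, b'_2)$ across the extracted sequence forces distinct $b'_1 b'_2$-types for even- and odd-indexed elements, while indiscernibility over $b'_1$ requires uniform $b'_1$-types; reconciling these through a careful Ramsey/Erdős--Rado refinement on colourings by $b'_1$-type and by parity is where most of the technical work lies. A conceptually cleaner alternative would be a Sauer--Shelah-style direct combinatorial argument, adapted to the positive setting, showing that $\mathsf{IP}$ of $\phi$ transfers to $\mathsf{IP}$ of $\phi^*$ when a coordinate is moved from the $y$-side to the $x$-side, thereby sidestepping the extraction entirely.
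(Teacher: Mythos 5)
Your overall strategy --- peel off one $y$-coordinate by showing that $\mathsf{IP}$ of $\phi(x;y_1,y_2)$ transfers to $\mathsf{IP}$ of $\phi^*(x,y_1;y_2)$ --- hinges entirely on the ``extraction'' step, and that step is where the argument breaks. You claim that, from $\mathsf{IP}$ of $\phi$ together with indiscernibility, compactness and an Erd\H{o}s--Rado refinement, one can produce a sequence $(a_i')$ indiscernible over $b_1'$ on which $\phi(\cdot;b_1',b_2')$ still alternates against $\psi$. This does not follow from $\mathsf{IP}$ of $\phi$ alone. Take the (Morleyised) random graph, $\phi(x;y_1,y_2)\coloneqq R(x,y_1)\wedge R(x,y_2)$ and $\psi(x;y_1,y_2)$ its termwise negation: these witness $\mathsf{IP}$, but any sequence indiscernible over $b_1$ gives $R(\cdot,b_1)$ a constant truth value, so it can satisfy at most one of $\phi(\cdot;b_1,b_2)$, $\psi(\cdot;b_1,b_2)$ at any index and never alternates between them --- for \emph{any} choice of $b_2$. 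Since your justification of the extraction makes no use of the hypothesis that formulas with $\abs y=1$ are $\mathsf{NIP}$, it would apply verbatim to this example; so the tension you flag in your last paragraph is not a technicality to be ``reconciled by a careful Ramsey refinement'' but the entire mathematical content of the proposition, and it is left unproved. (Relatedly, the transfer ``$\mathsf{IP}$ of $\phi$ implies $\mathsf{IP}$ of $\phi^*$'' is false in general even in classical first-order logic: if $\phi$ does not depend on $y_2$, it cannot have $\mathsf{IP}$ in the partition $(xy_1;y_2)$.)

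The missing ingredient is exactly the paper's claim: using the hypothesis, one shows that for every indiscernible $(a_i)_{i<\abs T^+}$ and every \emph{singleton} $b$, some end segment $(a_i)_{\alpha<i<\abs T^+}$ is indiscernible over $b$. The proof assigns to each pair of contradictory positive formulas in the variables $(x_0,\ldots,x_n;y)$ with $\abs y=1$ a finite alternation bound via \Cref{lem:alt}, takes a supremum using regularity of $\abs T^+$, and uses condition (iii) of \Cref{def:ec-model} to convert a failure of indiscernibility over $b$ into a positive contradiction. Iterating this claim coordinate-by-coordinate (as in Simon's Proposition~2.11) is the only legitimate route to an end segment indiscernible over $b_1$ on which a strict (hence cofinal) alternation in $b_2$ survives --- and once one has the iterated claim, the detour through $\phi^*$ is unnecessary, since one may simply continue the iteration through $b_2$ and contradict the alternation directly. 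As written, the proposal identifies the right target but omits the proof of its only nontrivial step.
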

\begin{proof}
  We closely follow the proof of~\cite[Proposition~2.11]{simon_2015}.
  Assume that all formulas $\phi(x; y)$ with $\abs{y} = 1$ are $\mathsf{NIP}$. 
  \begin{claim}\label{claim:tailindoverb}
  Let $(a_i)_{i<\abs{T}^+}$ be an indiscernible sequence of tuples, and
let $b\in \monster$ with $\abs{b}=1$. Then there is some $\alpha<\abs{T}^+$ such that the sequence $(a_i)_{\alpha<i<\abs{T}^+}$ is indiscernible over $b$.
  \end{claim}
  \begin{claimproof}
For every pair $(\phi(x_0,\ldots,x_n;y),\psi(\phi(x_0,\ldots,x_n;y))$ of mutually contradictory formulas,   by \Cref{lem:alt} there is some $\alpha_{(\phi,\psi)}<\abs{T}^+$ such that  for every $\alpha_{(\phi,\psi)}<i_0<\ldots<i_n<\abs{T}^+$ and $\alpha_{(\phi,\psi)}<j_0<\ldots<j_n<\abs{T}^+$ we cannot have that $\models \phi(a_{i_0},\ldots,a_{i_n};b)$ and $\models \psi(a_{j_0},\ldots,a_{j_n};b)$ (otherwise we can inductively find an infinite sequence of $n+1$-tuples of elements of the sequence $(a_i)$ on which $\phi(x_0,\ldots,x_n;b),\psi(x_0,\ldots,x_n;b)$ alternate infinitely many times).
  Put $\alpha\coloneqq \sup_{\phi\wedge\psi\implica \bot} \alpha_{(\phi,\psi)}$ and observe that $\alpha<\abs T^+$ because the latter is a regular cardinal. Then $(a_i)_{\alpha<i<\abs{T}^+}$ is indiscernible over $b$: if not, then there is a positive formula $\phi(x_0,\ldots,x_n;y)$ such that  $\models \phi(a_{i_0},\ldots,a_{i_n};b)$ but $\models \neg \phi(a_{j_0},\ldots,a_{j_n};b)$ for some $\alpha<i_0<\ldots<i_n$ and $\alpha<j_0<\ldots<j_n$.  By condition~\ref{point:ecm3} in \Cref{def:ec-model} there is a positive formula $\psi(x_{0},\ldots,x_{n};y)$ contradictory with $\phi(x_{0},\ldots,x_{n};y)$ such that $\models \psi(a_{j_0},\ldots,a_{j_n};b)$. But, as $\alpha_{(\phi,\psi)}<\alpha<i_0<\ldots<i_n$ and $\alpha_{(\phi,\psi)}<\alpha<j_0<\ldots<j_n$, this contradicts the choice of $\alpha_{(\phi,\psi)}$.
\end{claimproof}
 The conclusion follows from \Cref{claim:tailindoverb} exactly as in \cite{simon_2015}.
\end{proof}
As pointed out in~\cite[Theorem 10.18]{DK}, the technique used there can be used to prove that adding hyperimaginary sorts preserves $\mathsf{NIP}$ (in positive logic). We verify the details below; for background on the hyperimaginary extension $T^{\mathrm{heq}}$ considered in positive logic see~\cite[Example 2.16]{BY} or~\cite[Subsection 10.3]{DK}. $\mathsf{NIP}$ (along with stability) of hyperdefinable sets was extensively studied in  \cite{Haskel,KP1,KP2}.

\begin{pr}
  If $T$ is $\mathsf{NIP}$, then  $T^{\mathrm{heq}}$ is also $\mathsf{NIP}$.
\end{pr}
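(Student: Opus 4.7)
The plan is to argue contrapositively: assume $T^{\mathrm{heq}}$ has $\mathsf{IP}$ and extract an $\mathsf{IP}$ witness in $T$. Fix contradictory positive formulas $\phi(X;Y), \psi(X;Y)$ of $T^{\mathrm{heq}}$, where $X, Y$ range over products of hyperimaginary sorts $S_E, S_F$ corresponding to type-definable equivalence relations $E, F$ on $T$-tuples, together with tuples $(\alpha_i)_{i<\omega}, (\beta_W)_{W\subseteq \omega}$ witnessing $\mathsf{IP}$ (appealing to the analogue of \Cref{rem:pecnipenough} for $T^{\mathrm{heq}}$ if convenient). Choose representatives so that $\alpha_i = [a_i]_E$ and $\beta_W = [b_W]_F$, with $a_i, b_W$ drawn from $\monster$.

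The key step is to transport $\phi, \psi$ back to $T$ via these representatives. By the construction of the hyperimaginary extension in positive logic (as in \cite[Example 2.16]{BY} and \cite[Subsection 10.3]{DK}), pulling $\phi, \psi$ back along the quotient maps $\pi_E, \pi_F$ yields partial positive types $\Phi(x;y), \Psi(x;y)$ over $\emptyset$ in the language of $T$ such that
\[
  \phi([a]_E;[b]_F) \iff \monster \models \Phi(a;b) \qquad\text{and}\qquad \psi([a]_E;[b]_F) \iff \monster \models \Psi(a;b)
\]
for all $a, b$ from $\monster$. Contradictoriness of $\phi\wedge\psi$ in $T^{\mathrm{heq}}$ then translates into $\Phi\cup\Psi$ being inconsistent with $T$, so by compactness there exist finite subconjunctions $\phi'(x;y) \subseteq \Phi$ and $\psi'(x;y) \subseteq \Psi$, each a single positive formula of $T$, with $T\models \forall x,y\, \neg(\phi'(x;y)\wedge\psi'(x;y))$. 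The lifts $(a_i)_{i<\omega}, (b_W)_{W\subseteq\omega}$ still satisfy $\phi'(a_i;b_W)$ for $i\in W$ and $\psi'(a_i;b_W)$ for $i\notin W$, since $\Phi, \Psi$ do and $\phi'\subseteq\Phi, \psi'\subseteq\Psi$. Hence $\phi'$ has $\mathsf{IP}$ in $T$ witnessed by $\psi'$, contradicting the hypothesis.

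The main obstacle is the translation of positive formulas of $T^{\mathrm{heq}}$ into data over $T$. The subtle point is that atomic equality on a hyperimaginary sort $S_E$ unfolds, on representatives, to $E(x,x')$, which is a partial positive type rather than a single formula; thus the naive lift of a single positive formula of $T^{\mathrm{heq}}$ is in general only a partial positive type of $T$. It is precisely compactness, combined with the inconsistency of $\phi\wedge\psi$ in $T^{\mathrm{heq}}$, that lets us extract the single formulas $\phi', \psi'$ required to witness $\mathsf{IP}$ in $T$. This is the same technique as in~\cite[Theorem 10.18]{DK}.
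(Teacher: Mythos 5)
Your argument is correct and is essentially the paper's own proof: both pull the contradictory pair $\phi,\psi$ of $T^{\mathrm{heq}}$ back to partial positive types over the home sort (the paper invokes \cite[Lemma 10.10]{DK} for this), use inconsistency plus compactness to extract a single contradictory pair $\phi',\psi'$ of positive $L$-formulas, and check that the representatives $a_i, b_W$ still witness $\mathsf{IP}$. Your identification of the subtle point (hyperimaginary equality unfolding to a partial type rather than a formula) matches the role that lemma plays in the paper.
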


\begin{proof}
Suppose $T^{\mathrm{heq}}$ has $\mathsf{IP}$, witnessed by a pair of mutually contradictory positive formulas $\phi(x;y)$ and $\psi(x;y)$ and parameters $([a_i])_{i<\omega}$ and $([b_W])_{W\subseteq \omega}$. Here $x$ and $y$ are hyperimaginary variables (in contrast to \cite{DK}, here we omit variables of the home sort in our notation by identifying the home sort with its quotient by the equality relation), and  $[a_i]$ indicates the equivalence class of $a_i$ modulo the type-definable equivalence relation corresponding to the sort $x$, and similarly for $[b_W]$.
Let $x_r$ and $y_r$ be home-sort variables corresponding to $x$ and to $y$, respectively. By~\cite[Lemma 10.10]{DK}  there are partial positive types $\Sigma_\phi(x_r;y_r)$ and $\Sigma_\psi(x_r;y_r)$ such that $\models \phi([a_i]; [b_W])\iff \models \Sigma_\phi(a_i; b_W)$, and similarly for $\psi$ and $\Sigma_\psi$.
Then $\Sigma_\phi(x_r;y_r)\cup\Sigma_\psi(x_r;y_r)$ is inconsistent, so there are $\phi'\in \Sigma_\phi$ and $\psi'\in \Sigma_\psi$ such that $\phi'(x_r;y_r)\wedge \psi'(x_r;y_r)$ is inconsistent. We claim that the formulas $\phi'(x_r;y_r)$ and $\psi'(x;y)$ together with the parameters $(a_i)_{i<\omega}$ and $(b_W)_{W\subseteq \omega}$ witness that $T$ has $\mathsf{IP}$. Indeed, for every  $i\in W\subseteq \omega$ we have $\models \phi([a_i];[b_W])$ so $(a_i;b_W)\models \Sigma_{\phi}$, so in particular $\models\phi'(a_i;b_W)$. Similarly $\models\psi'(a_i;b_W)$ for $i\notin W$.
 \end{proof}

\begin{pr}\label{pr:APNIP}
Let $T$ be a Robinson theory. If every quantifier-free positive formula in $T$ is $\mathsf{NIP}$,  then $T$ is $\mathsf{NIP}$. In fact, the assumption that quantifier-free formulas are $\mathsf{NIP}$ may be weakened to: for every quantifier-free $\phi$ and $\psi$, we have $\alt(\phi, \psi)<\infty$.
\end{pr}
\begin{proof}
  Suppose
  $\phi(x;y)$ has $\mathsf{IP}$ witnessed by $\psi(x;y)$ and parameters $(a_j)_{j<\omega}$, $(b_W)_{W\subseteq \omega}$ in some pec model $M$ of $T$. By \Cref{fact:robinfqe} there is a collection of positive quantifier-free formulas $(\phi_i(x;y))_{i\in I}$ such that $\phi(x;y)$ is equivalent to the infinitary conjunction $\bigwedge_{i\in I}\phi_i(x;y)$ in every pec model of $T$. Hence $\{\phi_i(x;y):i\in I\}\cup \{\psi(x;y)\}$ is inconsistent with $T$ (otherwise it would have a realisation in every sufficiently saturated pec model of $T$, and such a realisation would satisfy $\phi(x;y)\wedge \psi(x;y)$), so there is some finite $I_0\subseteq I$ such that $\bigwedge_{i\in I_0}\phi_i(x;y)\wedge \psi(x;y)$ is inconsistent with $T$. Then $\psi(x;y)$ and the parameters $(a_j)_{j<\omega}, (b_W)_{W\subseteq \omega}$ witness that the positive formula  $\bigwedge_{i\in I_0}\phi_i(x;y)$ has $\mathsf{IP}$.

  The last sentence is proven similarly, replacing $\psi$ with an infinite conjunction $\bigwedge_{i\in I'} \psi_i$.
\end{proof}
\begin{co}\label{co:robatnip}
If $T$ is Robinson and every atomic formula in $T$ is $\mathsf{NIP}$, then $T$ is $\mathsf{NIP}$. 
\end{co}
\begin{proof}
  By \Cref{pr:APNIP,pr:lor,pr:land}.
\end{proof}
Note that, at least a priori, even if $T$ is Robinson and $\phi(x;y)$ is atomic, in order to check that $\phi(x;y)$ is $\mathsf{NIP}$ one has to check all positive quantifier-free $\psi(x;y)$, not just the atomic ones.

\begin{prob}
   Construct a positive $\mathsf{NIP}$ theory where there is a positive $\phi(x;y)$, some $\abs y$-tuple $b$, and an indiscernible sequence alternating infinitely many times between $\phi(x;b)$ and $\neg \phi(x;b)$.
\end{prob}

\subsection{Examples}\label{sec:DLOA}
In this subsection we provide some examples of positive $\mathsf{NIP}$ theories.
First, we take a brief look at the positive theory of linear orders with an automorphism, and, more generally, at the positive theory of a linear order with a  $G$-action by automorphisms for any fixed group $G$. We provide a direct proof that these are $\mathsf{NIP}$ in positive logic; this can be also easily deduced from~\cite[Th\'eor\`eme~4.4.19]{dealdama}. Then, we use the amalgamation results obtained in \Cref{sec:amalgam} to conclude that the positive theory of ordered abelian groups with an automorphism is $\mathsf{NIP}$, as well as $\mathbb Q\mathsf{-OVSA}$ and $\mathbb R\mathsf{-OVSA}$. Before any of that, for the record, let us point out something obvious.
\begin{rem}
  A classical first-order theory is $\mathsf{NIP}$ if and only if, when viewed as a positive theory via  Morleyisation, it is $\mathsf{NIP}$ in the sense of \Cref{def:nip}.
\end{rem}

Let $L=\set{<,\sigma,\sigma\inverse}$ (working with $\le$ instead would result in a unique pec model, a singleton). Let $T$ be the positive theory of a linear order $<$ with an automorphism $\sigma$. Arguing as as in \Cref{rem:rovsarob}, we see that $T$ is Pillay. Moreover, the JCP is easily checked to hold. More generally, and for the same reason, if $G$ is any group and $L_G\coloneqq\set{<}\cup\set{g\cdot -: g\in G}$, then the theory $T_G$ of linear orders with an action of $G$ by automorphisms is Pillay.\footnote{Note that, in some cases, the action of $G$ will be trivial, since every torsion point of $G$ is forced to act identically by the presence of a linear order.} We work in a pec monster model $\monster$.

How do pec models of $T$ look like?
It is easy to see that they are ordered densely and without endpoints, and that the set of non-fixed points of is dense: a model containing an interval $(a,b)$ of fixed points may always be continued to one where $\sigma(x)>x$ for some $x$ with $a<x<b$. It is similarly shown that an interval $[a,b]$ contains a fixed point if and only if $b>\sigma^k(a)$ for every $k\in \mathbb Z$ (see e.g.~\cite{MC}). From this, it follows easily that $T$ does not have a model companion; this is a special case of \cite[Theorem 3.1]{kikyo_2000}, and also of the more general fact below.
\begin{fact}[{\cite[Theorem 1]{kikyo_strict_2002})}]
If $T'$ is a classical first-order theory which  has a completion with $\mathsf{SOP}$, and $T$ is the theory of models of $T'$ together with an automorphism $\sigma$, then $T$ has no model companion.
\end{fact}

\begin{lemma}\label{lemma:qf}
Atomic formulas in $T_G$ are $\mathsf{NIP}$.
\end{lemma}
\begin{proof}
Up to equivalence modulo $T_G$, atomic formulas are of the form $\phi(x;y)\coloneqq g\cdot x\mathrel\square y$ where $\square\in\set{<,>,=}$ and $g\in G$. Clearly $\operatorname{alt}(\phi, \psi)\leq 1$ for every  $\psi$ contradictory with $\phi$. Thus $\phi$ is $\mathsf{NIP}$ by \Cref{lem:alt}. 
\end{proof}

 \begin{pr}
For every group $G$, the positive theory $T_G$ is $\mathsf{NIP}$. 
 \end{pr}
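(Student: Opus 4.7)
The plan is to apply \Cref{pr:APNIP} to $T = T_G$. This requires three ingredients: (i) every quantifier-free formula is equivalent to a positive quantifier-free one; (ii) the class of models of $(T_G)_\forall$ has the amalgamation property; (iii) every quantifier-free formula has $\mathsf{NIP}$. Ingredient (iii) is exactly \Cref{lemma:qf}. For (i), note that $\neg(x<y)$ is equivalent to $(y<x)\lor (y=x)$ and $\neg(x=y)$ to $(x<y)\lor (y<x)$, so de Morgan manipulations turn every quantifier-free formula into a positive one. Moreover $T_G$ is universally axiomatisable (linear-order axioms, group-action axioms, and ``each $g$ preserves $<$'' are all $\Pi_1$), so $(T_G)_\forall$ and $T_G$ have the same models, namely linear orders equipped with an action of $G$ by order-preserving bijections.

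For (ii), given embeddings $A\hookrightarrow B_0,B_1$, I would take $C\coloneqq B_0\sqcup_A B_1$ as a set; since $A$ is $G$-invariant in each $B_i$, no $g\in G$ moves a point of $B_i\setminus A$ into $A$, so there is a well-defined $G$-action on $C$ extending those of $B_0$ and $B_1$. To define the order, I keep the orders of $B_0$ and $B_1$; for $b_0\in B_0\setminus A$ and $b_1\in B_1\setminus A$ whose cuts in $A$ differ, their relative position is forced by any separating element of $A$; for $b_0,b_1$ realising the \emph{same} cut of $A$, I adopt the blanket convention $b_0<b_1$. This relation is $G$-equivariant because the action permutes the cuts of $A$ and respects the partition $C=(B_0\setminus A)\sqcup (B_1\setminus A)\sqcup A$, so the convention ``$B_0$-elements precede $B_1$-elements at a matching cut'' is $G$-invariant.

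The point that requires care is verifying that this actually defines a linear order, which I would reduce to a case analysis on triples. The main obstacle I anticipate is transitivity in alternating configurations such as $c_0\in B_0$, $c_1\in B_1$, $c_0'\in B_0$ with $c_0<c_1<c_0'$: in each subcase the inequality $c_0<c_0'$ in $B_0$ follows either from a separating element of $A$ transported across the triple, or from the fact that the ``same cut'' convention cannot apply to both $(c_0,c_1)$ and $(c_1,c_0')$ simultaneously, since it would force $c_1>c_0'$, contradicting $c_1<c_0'$. Once $C$ is seen to be a linear order with the $G$-action above, it is a model of $(T_G)_\forall$ amalgamating $B_0$ and $B_1$ over $A$, and \Cref{pr:APNIP} then yields $\mathsf{NIP}$ for $T_G$.
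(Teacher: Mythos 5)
Your proposal is correct and follows essentially the same route as the paper: reduce to amalgamation of linearly ordered $G$-sets via \Cref{pr:APNIP} and \Cref{lemma:qf}, note that $T_G=(T_G)_\forall$, and amalgamate by ordering $B_0\sqcup_A B_1$ using separating elements of $A$ where available and the blanket convention ``$B_0$ before $B_1$'' on matching cuts — which is exactly the paper's rules (1)–(3) (the paper attributes this to Lascar and Lipparini but reproduces the same argument).
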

\begin{proof}
By \Cref{lemma:qf,co:robatnip} it is enough to check that $T_G$ is Robinson, i.e.~that the category of models of $(T_G)_\mathrm{sub}$ has the AP. Observe that $T_G$ is universally axiomatised\footnote{In the classical sense. One cannot say that $<$ is a strict linear order with h-universal axioms.}, so $T_G=(T_G)_\mathrm{sub}$, hence this is the class of linear orders (not necessarily dense) with a $G$-action by automorphisms. For $G=(\mathbb{Z},+)$ the conclusion is stated in \cite[Proposition~3.4]{Lascar}, and for arbitrary $G$ this is \cite[Theorem~3.1(d)]{lipparini}. Since the argument is short, we reproduce it below for the sake of self-containedness. 
 Namely, if $A,B,C\models T_G$, and $A$ is a substructure of both $B$ and $C$,  with $B\setminus A$  disjoint as a set from $C\setminus A$ (which we may assume), then we can put a total order $<_D$ on $D\coloneqq B\cup C$ extending the given orders $<_B$ on $B$ and $<_C$ on $C$, by placing points in the correct cuts of $A$ and stipulating, for example, then for each such cut $p$ we have $p(B)<p(C)$. More precisely, 
 \begin{enumerate}
    \item  if there is $a\in A$ with $b<_B a$ and $a<_C c$, we set $b<_Dc$   
    \item  if there is $a\in A$ with $b>_B a$ and $a>_C c$,  we set $b>_Dc$, and
    \item  if $b,c\notin A$ and there is no $a$ as in any of the previous points, we set $b<_Dc$.
\end{enumerate}
Then $<_D$ is clearly a linear order on $D$, and the action of $G$ on $B$ and on $C$ induces a $G$-action on $(D,<_D)$ by order automorphisms. With this structure on $D$, both $B$ and $C$ become substructures of $D$, hence $D$ solves the amalgamation problem.
\end{proof}
We now move to the setting of automorphisms of oags and of ordered rational or real vector spaces. Recall from \Cref{rem:rovsarob} that these theories are all Robinson.\footnote{Also, all of these are universally axiomatised, again in the classical sense.} The JCP is easily verified by taking lexicographical products.

By \Cref{thm:redtoR} every  automorphism of an ordered abelian group extends to one of an ordered $\mathbb R$-vector space extending it. Therefore, as far as existentially closed models are concerned (e.g.~IVP over them, $\mathsf{NIP}$), working with ordered $\mathbb R$-vector spaces is not a real restriction. Amalgamation for models of $T_\mathrm{sub}$ is more delicate, since naming multiplication by scalars changes $T_\mathrm{sub}$ or, in other words, the notion of substructure, but we already took care of this in \Cref{thm:apqtor}.

\begin{lemma}\label{lemma:qfovsa}
Atomic formulas in $R\mathsf{-OVSA}$ are $\mathsf{NIP}$. The same holds for quantifier-free $(L_{\mathrm{oag}}\cup \set{\sigma, \sigma\inverse})$-formulas in the theory of ordered abelian groups with an automorphism.
\end{lemma}
\begin{proof}
 Up to syntactic manipulations each atomic formula is of the form $f(x)\mathrel\square b$ where $\square\in\set{<,>,=}$ and $f\in R[\sigma]$ (in the case of ordered abelian groups with an automorphism put $R\coloneqq\mathbb Z$). To conclude, invoke \Cref{lem:alt} and observe that for every indiscernible sequence $(a_i)_{i<\omega}$, the sequence $(f(a_i))_{i<\omega}$ is also indiscernible, in particular monotone or constant.
\end{proof}

\begin{co}\label{co:apthennip}
The positive theory of ordered abelian groups with an automorphism is $\mathsf{NIP}$, and so are $\mathbb R\mathsf{-OVSA}$ and  $\mathbb Q\mathsf{-OVSA}$.
\end{co}
\begin{proof}
By \Cref{thm:apqtor,co:robatnip,lemma:qfovsa}.
\end{proof}

\subsection{Invariant types}\label{sec:invtp}
Once again, unless otherwise stated, we let $T$ be a positive theory with the JCP and  $\monster$ a monster model. The letter $A$ will denote a small subset of $\monster$. Types over $\monster$ will be referred to as \emph{global types}. Since we want to deal with global types, we will need to mention sets and tuples outside of $\monster$. These are assumed to come from a larger monster model, which will go unnamed.

\begin{defin}\*
  \begin{enumerate}
  \item We call a global maximal type $p(x)$ \emph{invariant over $A$}
    or \emph{$A$-invariant} iff whenever $a\equiv_A a'$ and
    $\phi(x;y)$ is a positive formula we have
    $p(x)\proves \phi(x;a)\iff p(x)\proves \phi(x;a')$.  
  \item   For a pec model $M$, a formula $\phi(x;y)$ and a global $M$-invariant maximal type $p(x)$, we put
    \begin{multline*}
d_p\phi\coloneqq \{q\in S(M): (\exists c\models q)(\phi(x;c)\in p)\}\\=\{q\in S(M): (\forall  c\models q)(\phi(x;c)\in p)\}
\end{multline*}

  \item  We will denote the set of maximal
    positive types over $\monster$ which are invariant over $A$ by 
    $\invtypes(\monster, A)$.
  \item  For $p\in\invtypes(\monster, A)$ and  $B\supseteq \monster$, by $p\invext B$ we will  denote the set $\{\phi(x;a):a\in B, (\exists a'\equiv_A a)(\phi(x;a')\in p)\}$.
  \item If $p\in\invtypes(\monster, A)$ and $(I, <)$ is a linear order, an $I$-sequence $(a_i)_{i\in I}$ is a \emph{Morley sequence} in $p$ over $A$ iff for every $\bla i0<n\in I$ we have $a_{i_j}\models p\invext A a_{i_0}\ldots a_{i_{j-1}}$. The (uniquely determined) type of a Morley sequence over $A$ indexed on $I$ will be denoted by $\pow pI\invext A$.
\end{enumerate}
\end{defin}

\begin{rem}
If $p\in \invtypes(\monster, A)$, then every Morley sequence in $p$ over $A$ is $A$-indiscernible.
\end{rem}

\begin{pr}
Let $p\in\invtypes(\monster, A)$, and let $B\supseteq \monster$. Then $p\invext B$ is a maximal positive type over $B$.
\end{pr}
\begin{proof}
It is enough to show that for every finite $b\in B$ we have that  $p\invext A b$ is a maximal positive type. Let $\tilde b\in \monster$ be such that $b\equiv_A \tilde b$. Now $p\invext A\tilde b$ is a maximal positive type, and the two are conjugate.
\end{proof}

 In the classical first-order setting, by~\cite[Lemma~7.18]{simon_2015}, if $p$ is a global $A$-invariant type then  $(d_p\phi)(y)$ is a Borel subset of $S_y(A)$, regardless of the cardinality of the language. We do not know whether this holds in an arbitrary positive theory, but the proof of the aforementioned result may be adapted to prove the statements below. Recall that a \emph{constructible} subset of a topological space is a finite Boolean combination of closed (equivalently, open) sets. We consider $S_y(M)$ with the topology given by a subbasis of open sets consisting of the sets of the form $[\neg\phi(x)]$ for positive $\phi(x)$.
\begin{lemma}
Let $T$ be a positive $\mathsf{NIP}$ theory. Let $\phi(x;y)$ be a positive formula and $p\in S^\mathrm{inv}_x(\monster, M)$.  Then $d_p\phi$ is an intersection of at most $\abs T$ constructible  subsets of $S_y(M)$.
\end{lemma}
\begin{proof}
  Since $p$ is maximal, if $p(x)\proves \neg\phi(x;d)$ and $a\models p$ then $\models \psi(a;d)$ for some positive $\psi(x;y)$ which is contradictory with $\phi(x;y)$. By \Cref{lem:alt} there is a maximal $N=N_\psi$ such that there exist some $d$ and $(a_0,\ldots, a_{N-1})\models \pow pN\invext M$ alternating between $\phi(x;d)$ and $\psi(x;d)$ (i.e.~$\models \phi(a_i;d)\iff \models \psi(a_{i+1};d)$ for all $i<N-1$). Fix $d$ and $n\le N$ such that $n$ is maximal among the lengths of Morley sequences in $p$ over $M$ alternating between $\phi(x;d)$ and $\psi(x;d)$.  Suppose that $\psi(x;d)\in p$ but the alternation on some $(a_i)_{i<n}$ as above stops at $\phi$, i.e.~$\models \phi(a_{n-1};d)$. Then if we let $a_n\models p\invext M\bla a0,{n-1},d$ we violate maximality of $n$. So  $\models \psi(a_{n-1};d)$.

  Let $A_{n,\psi}(y)$ be the partial type saying that there is a Morley sequence in $p$ over $ M$ of length $n$ alternating between $\phi(x;y)$ and $\psi(x;y)$ and stopping at $\psi(x;y)$, and let $B_{n, \psi}(y)$ say the analogous thing but with ``stopping at $\phi$'' instead.  Then by the previous paragraph
  \[
    (d_p\phi)^\complement\subseteq\bigcup_{\psi\wedge \phi\implica \bot}\bigcup_{n\le N_\psi} [A_{n,\psi}(y)]\cap [B_{n+1,\psi}(y)]^\complement
  \]
  For the other inclusion suppose $\tp(d/M)\in [A_{n,\psi}(y)]\cap [B_{n+1,\psi}(y)]^\complement$. Let $(a_i)_{i<n}$ witness that $\tp(d/M)\in [A_{n,\psi}(y)]$. If we had $p(x)\proves \phi(x;d)$ then by taking $a_n\models p\invext M,\bla a0,{n-1},d$ we contradict the fact that $\tp(d/M)\in [B_{n+1,\psi}(y)]^\complement$.
\end{proof}
\begin{co}
  If $T$ is countable and $\mathsf{NIP}$ then invariant types are Borel-definable. That is, for every pec model $M$, positive formula $\phi(x;y)$ and $M$-invariant global positive type $p$, the set $d_p\phi$ is a Borel subset of $S_y(M)$.
  \end{co}
  \begin{question}
    Let $T$ be a positive $\mathsf{NIP}$ theory. Are invariant global types Borel-definable, regardless of $\abs T$?
  \end{question}

 The following characterisation of $\mathsf{NIP}$ in full first-order logic is standard, see e.g.~\cite[Proposition 2.43]{simon_2015} and \cite[Proposition 7.6]{casanovas}.

\begin{fact}\label{fact:counting}
  The following are equivalent for a complete first-order theory $T$.
  \begin{enumerate}
      \item \label{point:coh1} The theory $T$ is $\mathsf{NIP}$.
      \item \label{point:coh2}For every $M\models T$ and every $p\in S(M)$, there are at most $2^{\abs{M}+\abs{T}}$ global $M$-invariant extensions of $p$.
      \item \label{point:coh3}For every $M\models T$ and every $p\in S(M)$, there are at most $2^{\abs{M}+\abs{T}}$ global coheir extensions of $p$.
  \end{enumerate}
\end{fact}
We first verify that the implication $\ref{point:coh1}\allora\ref{point:coh2}$ still holds in positive logic.

\begin{pr}
  Assume $T$ is $\mathsf{NIP}$ and let $p,q$ be global maximal $A$-invariant types. If $\pow p\omega\invext A=\pow q\omega\invext A$ then $p=q$.
\end{pr}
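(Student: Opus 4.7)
The plan is to argue by contrapositive: assuming $p\ne q$, I would construct an $A$-indiscernible sequence and a parameter witnessing infinite alternation, contradicting the $\mathsf{NIP}$ hypothesis via \Cref{lem:alt}.

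First, I would extract a contradictory pair of positive formulas witnessing the inequality $p\ne q$. By maximality of $q$ there is a positive $\phi(x;y)$ and $b\in\monster$ with $\phi(x;b)\in p\setminus q$. Taking any $a\models q$ in a pec continuation of $\monster$ and applying condition (iii) of \Cref{def:ec-model} to the pair $(a,b)$ and formula $\phi(x,y)$ produces a positive $\psi(x;y)$ with $T\models\neg\exists x,y\,(\phi(x;y)\land\psi(x;y))$ and $\psi(x;b)\in q$.

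Next, I would construct inductively a sequence $(a_i)_{i<\omega}$ in $\monster$---realisations exist by saturation, since restrictions of the global types $p$ and $q$ to small subsets of $\monster$ are finitely satisfiable in $\monster$---by setting $a_i\models p\invext Aba_{<i}$ for even $i$ and $a_i\models q\invext Aba_{<i}$ for odd $i$. By $A$-invariance and the choice of $\phi,\psi$, the sequence satisfies $\models\phi(a_i;b)$ for even $i$ and $\models\psi(a_i;b)$ for odd $i$. The key point is that, despite this alternation, $\tp((a_i)_{i<\omega}/A)=\pow p\omega\invext A$: by induction on $n$, if $\tp(a_{<n}/A)=\pow pn\invext A$, then since $\pow pn\invext A=\pow qn\invext A$, the element $a_n$ (chosen via either $p$ or $q$) in particular realises $p\invext A a_{<n}$ or $q\invext A a_{<n}$, yielding $\tp(a_{\le n}/A)=\pow p{n+1}\invext A=\pow q{n+1}\invext A$. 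In particular, $(a_i)_{i<\omega}$ is $A$-indiscernible.

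Finally, $(\phi,\psi,(a_i)_{i<\omega},b)$ exhibits infinite alternation, so by \Cref{lem:alt} $\phi$ has $\mathsf{IP}$ witnessed by $\psi$, contradicting the $\mathsf{NIP}$ assumption. The main subtlety is the verification that both possible choices of $a_n$ produce the same type over $A$; this is precisely where the hypothesis $\pow p\omega\invext A=\pow q\omega\invext A$ enters decisively, and I do not anticipate any further obstacle beyond that bookkeeping.
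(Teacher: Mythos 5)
Your proof is correct and follows essentially the same strategy as the paper: both arguments rest on the observation that, since $\pow p\omega\invext A=\pow q\omega\invext A$, one may interleave realisations of the invariant extensions of $p$ and $q$ while staying inside the Morley type over $A$, and then use the contradictory pair $\phi\in p$, $\psi\in q$ together with \Cref{lem:alt} to contradict $\mathsf{NIP}$. The only (immaterial) difference in packaging is that the paper takes a finite Morley sequence maximising the alternation number and extends it once to reach a contradiction, whereas you build the infinite alternating sequence outright.
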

\begin{proof}
  Suppose $p,q$ are counterexamples. By maximality there are mutually contradictory positive $\phi(x;d)\in p$ and $\psi(x; d)\in q$. By \Cref{rem:altpar} there is a Morley sequence $(a_i)_{i<n}$ in $p$ (equivalently, in $q$) over $A$ maximising $\operatorname{alt}(\phi,\psi,(a_i), d)$  among Morley sequences in $p$ over $A$. We may assume (by swapping the roles of $p$ and $q$ if necessary) that $a_{n-1}\models \phi(x;d)$. 
Let $a_n\models q\invext A (a_i)_{i<n}d$. By assumption $\models \psi(a_n;d)$. But by construction $(a_i)_{i\le n}$ is a Morley sequence in $p$ (and in $q$) over $A$, contradicting maximality of $\operatorname{alt}(\phi,\psi,(a_i), d)$.
\end{proof}

\begin{co}\label{co:counting_invariant}
If $T$ is $\mathsf{NIP}$ then, for every $A$, every $p\in S(A)$ has at most $2^{\abs{A}+\abs{T}}$ global $A$-invariant maximal extensions.
\end{co}
Thus the implication $\ref{point:coh1}\allora\ref{point:coh2}$ in \Cref{fact:counting}  still holds in positive logic. To examine the other implications,  we need to first specify a notion of coheir we want to work with. We will consider the following two natural notions of a coheir in positive logic, both specialising to the usual notion of a coheir in full first-order logic.

\begin{defin}
Let $M\subseteq \monster$ be a pec model and let $p(x)\in S(\monster)$. We will say that $p(x)$ is \begin{itemize}
    \item a \emph{positive coheir over $M$} (or a \emph{positive coheir extension} of $p\restr M$) iff every positive formula in $p(x)$ has a realisation in $M$;
    \item a \emph{negative coheir over $M$} (or a \emph{negative coheir extension} of $p\restr M$) iff for every positive formula $\phi(x)$ with parameters from $\monster$, if $\neg \phi(x)$ follows from $p(x)$, then $\neg\phi(x)$ has a realisation in $M$.
\end{itemize}
\end{defin}
\begin{rem}
If $M\models T$ is pec and $p$ is a global positive coheir over $M$, then $p$ is $M$-invariant.
\end{rem}
\begin{proof}
If not, then there are mutually contradictory positive formulas $\phi(x;y)$ and $\psi(x;y)$ and $a\equiv_M a'$ such that $\phi(x;a)\wedge\psi(x;a')\in p(x)$. As $p(x)$ is a coheir, there is some $m\in M$ with $\models \phi(m;a)\wedge \psi(m;a')$. This contradicts $a\equiv_M a'$.
\end{proof}
Thus,  $\ref{point:coh2}\allora\ref{point:coh3}$ in \Cref{fact:counting} holds in positive logic for positive coheirs.

While, at a first glance, the notion of a positive coheir may seem more natural to consider in positive logic, it has a drawback: if we only work with maximal types (see \Cref{rem:primetypes}), then a type $p\in S(M)$ may have no global positive coheirs. In fact, it may even have no global $M$-invariant extensions.
\begin{eg}\label{eg:no_inv}
The following theory was described in \cite[Section 4]{poizat_quelques_2010}. Consider the language $L=\{P_n,R_n:n<\omega\}\cup\{r\}$ where the $P_n$ and $R_n$ are unary relation symbols and $r$ is a binary relation symbol. Let $M = \{a_n,b_n:n<\omega \}$, with $a_0,b_0,a_1,b_1,\ldots$ pairwise distinct, be the $L$-structure in which $P_n$ is interpreted as $\{a_n,b_n\}$, the relation $R_n$ as the complement of $P_n$, and $r$ as the symmetric anti-reflexive relation $\{(a_n,b_n),(b_n,a_n):n<\omega\}$. Let $T$ be the  positive theory of the structure $M$. Then the models of $T$ are bounded (in fact, every pec extension of $M$ adds at most two new points), so in particular $T$ is thick (see \Cref{def:hausdorff-thick} below). However the unique non-algebraic maximal type over $M$ does not have any global positive coheir extensions nor $M$-invariant extensions. 
\end{eg}

On the other hand, for negative coheirs we have the following.

\begin{pr}\label{pr:negchex}
  Let $M\models T$ be pec and let $p\in S(M)$. Then $p$ has a global negative coheir extension.
\end{pr}
\begin{proof}
Let $\pi(x)\coloneqq p(x)\cup\{\phi(x;c):c\in \monster, \neg\phi(M;c)=\emptyset\}$. We claim that $\pi(x)$ is consistent: otherwise, there are $\psi(x)\in p(x)$ and $\phi_0(x;c_0),\ldots,\phi_{n-1}(x;c_{n-1})$ with $\neg\phi_i(M;c_i)=\emptyset$ for all $i<n$ such that $\psi(x)\wedge \bigwedge_{i<n}\phi_i(x;c_i)$ is inconsistent. Then for $m\in M$ realising $\psi(x)$ we must have that $\models \neg\phi_i(m;c_i)$ for some $i$, which contradicts the choice of $\phi_i(x;c_i)$.
Thus $\pi(x)$ is consistent and hence extends to a global maximal type $q(x)$. Clearly $q(x)$ is a negative coheir extension of $p(x)$.
\end{proof}

\begin{pr}\label{pr:IPcoheirs}
If $T$ has $\mathsf{IP}$, then for every $\lambda\geq \abs{T}$ there are a pec model $M$ of size $\lambda$ and $p\in S(M)$ which has at least $2^{2^{\lambda}}$ global negative coheir extensions.
\end{pr}
\begin{proof}
Let $\phi(x;y)$, $\psi(x;y)$, $(a_i)_{i<\lambda}$, and $(b_{W})_{W\subseteq \lambda}$ witness $\mathsf{IP}$. Let $M$ be a pec model of size $\lambda$ containing all $a_i$. For every ultrafilter $\mathcal {U}$ on $\mathscr P(\lambda)$, put $\pi_{\mathcal{U}}(x)\coloneqq \{\phi(x;b_W):W\in \mathcal{U}\}\cup\{\psi(x;b_W):W\notin \mathcal{U}\}\cup \{\chi(x;c):c\in \monster, \neg\chi(M;c)=\emptyset\}$. Every finite subset of $\pi_{\mathcal{U}}(x)$ is realised by $a_i$ for some $i$, so $\pi_{\mathcal{U}}(x)$ is consistent and extends to a global negative coheir $p$ over $M$, say $q_{\mathcal{U}}$. As there are $2^{2^{\lambda}}$ many ultrafilters on $\mathscr P(\lambda)$, by the pigeonhole principle some $2^{2^\lambda}$ of them have the same restriction $p$ to $M$. Clearly $p$ satisfies the conclusion.
\end{proof}
Thus,  $\ref{point:coh3}\allora\ref{point:coh1}$ in \Cref{fact:counting} still holds in positive logic for negative coheirs.

In the rest of this subsection, we will generalise \Cref{fact:counting} to the classes of semi-Hausdorff positive theories and of thick positive theories. These are mild assumptions which were used by Ben-Yaacov in his development of simplicity theory in positive logic~\cite{ben-yaacov_simplicity_2003, BYthick}, and which were used in~\cite{DK} for developing the theory of $\mathsf{NSOP}_1$ in positive logic.

\begin{defin}
\label{def:hausdorff-thick}
A positive theory $T$ is
\begin{itemize}
\item \emph{semi-Hausdorff} iff equality of types is type-definable, i.e.~there is a partial type $\Omega(x, y)$ such that $\tp(a) = \tp(b)$ if and only if $\models \Omega(a, b)$;
\item \emph{thick} iff being an indiscernible sequence is type-definable, i.e.~there is a partial type $\Theta((x_i)_{i < \omega})$ such that $(a_i)_{i < \omega}$ is indiscernible if and only if $\models \Theta((a_i)_{i < \omega})$.
\end{itemize}
\end{defin}
It follows easily from the definitions that every semi-Hausdorff theory is thick.
\begin{rem}
By~\cite[Theorem~8]{poizat_positive_2018},   every positive theory whose category of models with homomorphisms has the Amalgamation Property is \emph{Hausdorff}, a condition stronger than being semi-Hausdorff. In particular, all positive theories considered in \Cref{sec:DLOA} are semi-Hausdorff, hence thick.
\end{rem}
\begin{defin}
\label{def:lascar-distance}
Let $a, a'$ be two tuples and let $B$ be any parameter set. We write $d_B(a, a') \leq n$ iff there are $a = a_0, a_1, \ldots, a_n = a'$ such that for all $0 \leq i < n$ there is a $B$-indiscernible sequence starting with $a_i, a_{i+1}$. We write $a\equiv^\mathrm{Ls}_B a'$ if $d_B(a, a') \leq n$ for some $n<\omega$; in this case we say $a$ and $a'$ are \emph{Lascar-equivalent} over $B$.

A global maximal type is \emph{Lascar-invariant over $A$} iff whenever $a\equiv^\mathrm{Ls}_A a'$ and $\phi(x;y)$ is a positive formula we have
    $p(x)\proves \phi(x;a)\iff p(x)\proves \phi(x;a')$.  
\end{defin}
\begin{fact}[{\cite[Proposition 1.5]{BYthick}}]
\label{fact:thick-lascar-distance}
A positive theory is thick if and only if, for all $B$ and $n \in \omega$, the property ``$d_B(x, x') \leq n$'' is type-definable over $B$ (in variables $x,x'$).
\end{fact}

\begin{rem}\label{rem:invariance}
 Let  $M\models T$ be pec and let $p$ be a negative coheir over $M$.
 \begin{enumerate}
     \item\label{point:invt2}If $T$ is semi-Hausdorff, then $p$ is invariant over $M$.
     \item\label{point:invthick} If $T$ is thick, then $p$ is Lascar-invariant over $M$.
 \end{enumerate}
\end{rem}
\begin{proof}\*
  \begin{enumerate}
  \item 
Suppose $p(x)$ is not $M$-invariant, so for some $a\equiv_M a'$ there is some positive $\phi(x;y)$ with $\phi(x;a)\in p(x)$ and $\phi(x;a')\notin p(x)$. By semi-Hausdorfness, there is  a partial positive type $\pi(x,a,a')$ over $M$ such that for every $c$ we have $\tp(ca/M)=\tp(ca'/M)\iff c\models \pi(x,a,a')$. Then there is some $\psi(x)\in \pi(x,a,a')$ such that $\psi(x)\notin p(x)$.
So, as $p(x)$ is a negative coheir, there is some $m\in M$ with $\models \neg\psi(m)$. But that means $\tp(ma)\neq \tp(ma')$,  contradicting the choice of $a$ and $a'$.
\item Suppose $p(x)$ is not Lascar-invariant over $M$, so for some $a\equiv^\mathrm{Ls}_M a'$ with $n\coloneqq d_M(a,a')<\infty$ we have $p(x)\nvdash \pi(x,a,a')$, where  $\pi(x,a,a')$ is a partial positive type over $M$ such that for every $c$ we have $d_M(ca,ca')\leq n\iff c\models \pi(x,a,a')$ (such a type exists by thickness and \Cref{fact:thick-lascar-distance}). Then there is $\phi(x)\in \pi(x,a,a')$ with $\phi\notin p$. As $p$ is a negative coheir over $M$, there is some $m\in M$ with $\models \neg \phi(m)$. But, as $\phi(x)\in \pi(x,a,a')$, this means that $d_M(ma,ma')>n=d_M(a,a')$, a contradiction.\qedhere
\end{enumerate}
\end{proof}
Note that in   \Cref{rem:invariance}, point~\ref{point:invthick}, the conclusion cannot be strengthened to $M$-invariance of $p$ by \Cref{eg:no_inv} and \Cref{pr:negchex}.
\begin{co} If $T$ is semi-Hausdorff, the following are equivalent.
\begin{enumerate}
      \item\label{point:st21} The positive theory $T$ is $\mathsf{NIP}$.
      \item\label{point:st22} For every pec $M\models T$ and every $p\in S(M)$, there are at most  $2^{\abs{M}+\abs{T}}$ global $M$-invariant extensions of $p$.
      \item \label{point:st23} For every pec $M\models T$ and every $p\in S(M)$, there are at most $2^{\abs{M}+\abs{T}}$ global negative coheir extensions of $p$.
\end{enumerate}
\end{co}
\begin{proof}
The implication $\ref{point:st23}\allora\ref{point:st21}$
is immediate from \Cref{pr:IPcoheirs} and $\ref{point:st21}\allora\ref{point:st22}$  follows from \Cref{co:counting_invariant}. As for  $\ref{point:st22}\allora\ref{point:st23}$, it is a consequence of   \Cref{rem:invariance}, point~\ref{point:invt2}.
\end{proof}

In the more general case of a thick theory, we  add a sufficient saturation condition in point $\ref{point:st23}$. 
We write  $\lambda_T\coloneqq \beth_{(2^{\abs{T}})^+}$.

\begin{co} If $T$ is thick, the following are equivalent.
\begin{enumerate}
      \item \label{point:thick1} The positive theory $T$ is $\mathsf{NIP}$.
      \item \label{point:thick2}For every pec $M\models T$ and every $p\in S(M)$, there are at most $2^{\abs{M}+\abs{T}}$ global $M$-invariant extensions of $p$.
      \item \label{point:thick3}For every $\lambda_{T}$-saturated pec $M\models T$ and every $p\in S(M)$, there are at most $2^{\abs{M}+\abs{T}}$ global negative coheir extensions of $p$.
\end{enumerate}
\begin{proof}
As in the semi-Hausdorff case, $\ref{point:thick3}\allora\ref{point:thick1}$ and $\ref{point:thick1}\allora\ref{point:thick2}$  follow from \Cref{pr:IPcoheirs} and  \Cref{co:counting_invariant} respectively. As for 
$\ref{point:thick2}\allora\ref{point:thick3}$, by \cite[Lemma 2.20]{DK}, if $M$ is a $\lambda_{T}$-saturated pec model then a global positive type is Lascar-invariant over $M$ if and only if it is $M$-invariant. Hence the implication follows by \Cref{rem:invariance}, point~\ref{point:invthick}.
\end{proof}
\end{co}
\begin{rem}\label{rem:primetypes}
Whilst above we exclusively worked with maximal types, there is a more general notion of ``prime type'', considered in~\cite{haykazyan_spaces_2019}: a \emph{prime type} is a collection $p(x)$ of positive formulas such that, if $\phi(x)\vee \psi(x)\in p(x)$, then $\phi(x)\in p(x)$ or $\psi(x)\in p(x)$. 
Maximal types are precisely the collections of positive formulas satisfied by a tuple from a pec model, while prime types are the collections of positive formulas satisfied by a tuple from an \emph{arbitrary} model.
\end{rem}

\begin{rem}
The space of prime types over $\monster$ may be endowed with two dual topologies: one has the $[\phi(x)]$, for positive $\phi(x)$, as a subbasis of open sets, the other has the same sets as a subbasis of closed sets, see~\cite[Remark~3.7]{haykazyan_spaces_2019}. If we identify $M$ with the set of global maximal types realised in $M$, then a type over $\monster$ is a positive coheir over $M$ if and only if it lies in the closure of $M$ in the first topology, and is a negative coheir over $M$ if and only if it lies in the closure of $M$ in the second topology. 
\end{rem}
The reason why maximal negative coheirs always exist, while maximal positive ones may fail to, is that the set of maximal types is compact with respect to the second topology, but not necessarily with respect to the first one. Since both topologies on the whole space of prime types are compact, \emph{prime} positive coheirs always exist.
\begin{eg}
  Let $T$ be the positive theory of $M\coloneqq(\omega, \le, 0,1,2,\ldots)$, as in~\cite{poizat_quelques_2010}. Because there is a constant symbol for every element of $\omega$, we see that $M$ embeds in every model of $T$, and it is easy to see that no model of $T$ can have points strictly before $0$, or between two consecutive natural numbers. Because we are using $\le$ (as opposed to $<$)  every other model of $T$ may be continued to $\omega+1$ by sending every point larger than all natural numbers to $\omega$, hence $T$ has only two pec models, namely $M=\omega$ and the maximal pec model $\monster=\omega+1$. The maximal type at $+\infty$ over $M$ has only one prime positive coheir over $\monster$, the non-maximal type $\set{x\ge n: n\in \omega}$.
\end{eg}

\small

\newcommand{\etalchar}[1]{$^{#1}$}

\end{document}